\newtheorem{thm}{Theorem}[section]       \newtheorem{lem}[thm]{Lemma}
\newtheorem{prop}[thm]{Proposition}      \newtheorem{cor}[thm]{Corollary}
\newtheorem{rem}[thm]{Remark}
\newtheorem*{defn}{Definition}           
\newcommand{\ca}{{\cal A}}    \newcommand{\cb}{{\cal B}}   \newcommand{\cd}{{\cal D}}    
\newcommand{\ce}{{\cal E}}
\newcommand{\R}{\mathbb{R}}   \newcommand{\N}{\mathbb{N}}  
 \newcommand{\D}{\mathbb{D}}
\numberwithin{equation}{section}
\begin{document}

\newcommand{\ii}{\^\i}
\newcommand{\ai}{\^a}
\newcommand{\di}{\mbox{\sf div}}
\newcommand{\adj}{\mbox{\sf adj}}
\newcommand{\vre}{{\cal E}}
\newcommand{\pp}{\mbox{\sf supp}}

\noindent 
{\Large\bf Nonlinear Dirichlet forms associated with quasiregular mappings}\\

\noindent
{Camelia Beznea}\footnote
{National University of Science and Technology Politehnica Bucharest,  Splaiul Independentei 313, RO-77206 Bucharest, 
Romania (e-mail: camelia.beznea@upb.ro)},
Lucian Beznea\footnote{Simion Stoilow Institute of Mathematics  of the Romanian Academy,
 Research unit No. 2, 
P.O. Box \mbox{1-764,} RO-014700 Bucharest, Romania, and 
National University of Science and Technology Politehnica Bucharest
(e-mail: lucian.beznea@imar.ro)},
and 
Michael R\"ockner\footnote{Fakult\"at f\"ur Mathematik, Universit\"at Bielefeld,
Postfach 100 131, D-33501 Bielefeld, Germany, and Academy for Mathematics and Systems Science, CAS, Beijing 
(e-mail: roeckner@mathematik.uni-bielefeld.de)}

\vspace{6mm}

\noindent
{\small {\bf Abstract.} 
If $({\cal E}, {\cal D})$ is a symmetric, regular, strongly local
Dirichlet form on $L^2 (X,m)$, admitting a carr\'{e} du champ operator $\Gamma$,
and $p>1$ is a real number, then one can define a nonlinear
form ${\cal E}^p$ by the formula
$$
{\cal E}^p(u,v) = \int_{X} \Gamma(u)^\frac{p-2}{2} \Gamma(u,v)dm ,
$$
where $u$, $v$ belong to an appropriate subspace of the domain  ${\cal D}$.
We show that ${\cal E}^p$ is a nonlinear Dirichlet form in the sense
introduced by P. van Beusekom. 
We then construct the associated Choquet capacity.
As a particular case we obtain the nonlinear form associated with the $p$-Laplace operator 
on $W_0^{1,p}$.
Using the above procedure, 
for each $n$-dimensional quasiregular mapping $f$ we construct a nonlinear Dirichlet 
form $\ce^n$ ($p=n$) such that the components of $f$ become harmonic functions with
respect to $\ce^n$. 
Finally, we obtain  Caccioppoli type inequalities in the intrinsic metric induced by $\ce$,  for harmonic functions with respect to the form  $\ce^p$.
}
\vspace{3mm}

{\small
\noindent
{\bf Key words} 
Dirichlet form, nonlinear Dirichlet form, $p$-Laplace operator, Choquet capacity, 
quasiregular mapping, Caccioppoli inequality.

\vspace{2mm}

\noindent
{\bf Mathematics Subject Classification (2010)}  Primary: 
31C45,   	
31C25,      
35J92,  	
31C15,   	
31C05,  	
30L10;   	
Secondary: 
31D05,   	
31E05,   	
30Lxx.  	
}

\section{Introduction}\label{sect1}  

Let  $\Omega\subset\R^{n}$ be a bounded Euclidean domain  and  $p>1$.
If  $u\in C^{2}(\Omega)$ then recall that the {\it $p$-Laplacean}  $\Delta_{p}u$ of $u$  is defined as
$$
\Delta_{p}u :=-\di(\vert\nabla u\vert^{p-2}\nabla u)
$$
and  for all $u,v\in C^{2}_{c}(\Omega)$ we have
$$
\int_{\Omega}v\Delta_{p}udx=
\int_{\Omega}\vert\nabla u
\vert^{p-2}(\nabla u,\nabla v)dx.
$$
This equality allows  to define  a nonlinear bilinear form associated with 
the $p$-Laplace operator:
\begin{equation} \label{1.1}
\langle \Delta_{p}u, v \rangle :=
\int_{\Omega}\vert\nabla u\vert^{p-2}
(\nabla u,\nabla v)\quad \mbox{ for all }\,
u,v\in W_{0}^{1,p}(\Omega).
\end{equation} 

In \cite{Beu}  Petra van Beusekom introduced and studied an abstract class of nonlinear Dirichlet forms on a Banach space,
 in order to  have as models the above form generated by  the $p$-Laplace  operator,  but also the one generated by the
Monge-Amp\`{e}re operator.

Our first aim is to consider a wider class of concrete nonlinear Dirichlet spaces. 
More precisely, if $({\cal E}, {\cal D})$ is a symmetric, regular, strongly local
Dirichlet form on $L^2 (X,m)$, admitting a carr\'{e} du champ operator $\Gamma$,
and $p>1$ is a real number, then one can define a nonlinear
form ${\cal E}^p$ by the formula
$$
{\cal E}^p(u,v) = \int_{X} \Gamma(u)^\frac{p-2}{2} \Gamma(u,v)dm ,
$$
where $u$, $v$ belong to an appropriate subspace of the domain  ${\cal D}$.
We prove that ${\cal E}^p$ is a nonlinear Dirichlet form in the sense
introduced by P. van Beusekom and as a particular case we obtain the nonlinear form associated with the $p$-Laplace operator  on $W_0^{1,p}$. 
We can then apply the strategy  from \cite{HKM} to construct the associated capacity and 
we show that it is a Choquet capacity on $X$, extending the result obtained in  \cite{HKM} for the $p$-Laplace operator.
Following \cite{Beu},  we also construct the equilibrium potential for a compact set.
Finally, we prove Caccioppoli type inequalities in the intrinsic metric induced by $\ce$ for $p$-harmonic functions, that is, harmonic with respect to the form  $\ce^p$.

In the last four decades results from  nonlinear potential theory have been used in the study of quasiconformal and quasiregular mappings; 
cf. \cite{BI}, \cite{IM},  and \cite{HKM}.
Our second aim is to use  the  obtained results for each $n$-dimensional quasiregular mapping $f$ and to construct a nonlinear Dirichlet 
form $\ce^n$ ($p=n$) such that the components of $f$ become harmonic functions with respect to $\ce^n$.
This statement should be compared  with the results from the monograph \cite{HKM}, 
where to a quasiregular mapping it is associated a different structure, namely a nonlinear harmonic space.
We apply the obtained Caccioppoli type inequalities to the quasiregular mapping and we discuss the connections with the results from \cite{BI}.

The structure of the paper is the following. 
In Section \ref{sect2} we present, following \cite{Beu}, the nonlinear Dirichlet forms, and as an  example the form $(\ref{1.1})$  associated with the $p$-Laplace operator.
In Section \ref{sect3} we study the nonlinear $p$-form  generated by a linear regular Dirichlet form. 
The main result is Theorem \ref{thm3.8}, showing that this form is a nonlinear Dirichlet form in the sense of Section \ref{sect2}.
As a consequence, we prove in Corollary \ref{cor3.9} that the $p$-form associated with the Dirichlet form given by a uniformly  
elliptic differential operator is a nonlinear Dirichlet form on 
$ W_0^{1,p} (\Omega) $, in particular, the Dirichlet form associated with the $p$-Laplace operator. 
We construct then the equilibrium potential and the induced capacity. 
The nonlinear Dirichlet $p$-form  associated with a quasiregular mapping is investigated in Section \ref{sect4}. 
The main result is  Theorem \ref{thm4.3},
 proving   that  the components of a quasiregular mapping are harmonic functions with respect to a nonlinear Dirichlet $\ce^n$, where  $\ce$ is an associated linear Dirichlet form. 
 The Caccioppoli type inequalities and the application to the quasiregular mappings are exposed in Section \ref{sect6}. 
 We collected in an Appendix basic results used in the paper on local and strongly local Dirichlet forms: the capacity and quasi-continuity, the energy measure, and the carr\'e du champ operator.
 We also put in the Appendix basic  facts on  quasiregular mappings.
 
Finely, we would like to mention that concerning our results in Section \ref{sect3} there is a recent related paper  
\cite{K} that we learned of recently at a conference in Chemnitz, 
where the second named author presented a talk  about  the results of our paper.  
We  would like to thank Kazuhiro  Kuwae for  sending us a preliminary version of his paper, before it appeared on arXiv.
We refer to assertion $4)$ of Remark \ref{rem3.13} below,
where we explain the exact relation of \cite{K} with the results in Section \ref{sect3}  of our paper.

\section{Nonlinear Dirichlet forms} \label{sect2}

We present in this section the basic facts about the
nonlinear Dirichlet forms, in the form
developed by P. van Beusekom in \cite{Beu}, in order to
include as models the forms generated by  the 
Monge-Amp\`{e}re and $p$-Laplace  operators.

We fix a system  $(L^{1}(X,m),B,\mbox{\bf K},A)$, where:

\noindent
$-$ $X$  is a locally compact separable metric space
and $m$ is a Radon   measure on $X$. 

\noindent
$-$ $B$ is a real Banach space, with the norm  $\|\cdot \|_{B}$,
continuously embedded in  $L^{1}(X,m)$.
We denote by  $\langle, \rangle$
the duality between the dual  $B'$ of $B$ and $B$. 

\noindent
$-$ $\mbox{\bf K}\subset B$ 
is a closed convex cone such that: \\
$1.$ \quad $0\in \mbox{\bf K}$.\\
$2.$ \quad $u,v\in \mbox{\bf K}\Longrightarrow u\land v\in \mbox{\bf K}$. \\
$3.$ \quad $u,v\in \mbox{\bf K}$, $\alpha\in\R_{+}\Longrightarrow u
\land (v+\alpha)\in \mbox{\bf K}$. \\

\noindent
$-$ $A: \mbox{\bf K}\longrightarrow B'$ 
is an operator having the following properties: \\
$1.$ \quad $A0=0$. \\
$2.$ \quad $A$ is{ \bf coercive}, i.e. 
$\langle Au,u \rangle \geq c \| u \|_{B}^{p}$
(where $c$ and  $p$ are strictly positive constants)
for all  $u\in \mbox{\bf K}$. \\
$3.$ \quad $A$ is {\bf strictly monotone}, that is 
$\langle Au-Av, u-v \rangle > 0$,
for all  $u,v\in \mbox{\bf K}$, $u\not=v$.

\noindent
$-$ The triple $(A, \mbox{\bf K}, B)$ satisfies the {\bf Browder property}
for each $f\in B'$:
if $W$ is a nonempty convex closed subset of $\mbox{\bf K}$ 
then there exists  $u\in W$ such that
\begin{equation} \label{2}
\langle Au-f,v-u \rangle \geq 0\quad
\mbox{ for all } \, v\in W.
\end{equation} 

The map
$$
(u,v)\longmapsto \langle Au,v \rangle ,\quad
u\in \mbox{\bf K},\quad v\in B,
$$
associated to an operator $A$ as before, is called 
{\bf monotone  form} (on $B$).

An element $u\in \mbox{\bf K}$ is called {\bf pure  potential } if
$$
\langle Au,v \rangle \geq 0\quad
\mbox{ for all }\, v\in \mbox{\bf K},\quad v\geq 0.
$$

A monotone form is called {\bf nonlinear Dirichlet form}
if for each two potentials $u$ \c and $v$
and every constant $\alpha>0$, the following condition hold:
$$
\leqno{D1}\quad\quad \langle A(u\land v),u-u\land v \rangle \geq 0 .
$$
$$
\leqno{D2}\quad\quad \langle A(u\land (v+\alpha )),
u-u\land (v+\alpha) \rangle \geq 0.
$$

A normal contraction  $T$ {\bf operates on }
$A$ if
$$
u,v\in \mbox{\bf K}\Longrightarrow T(u)\in 
\mbox{\bf K}\mbox{ and } \langle A(u+Tu+v)-Av, u-Tu\rangle \geq 0.
$$

A normal contraction  $T$  $C$-{\bf operates  on} $A$
if
$$
u,v\in \mbox{\bf K}\Longrightarrow T(u)\in \mbox{\bf K}\mbox{ and }
\langle A(v+Tu)-Av, u-Tu \rangle \geq 0.
$$

\noindent
{\bf The nonlinear Dirichlet form associated with  the
$p$-Laplace operator.} 
Let  $\Omega\subset\R^{n}$ be a bounded domain  and  $p>1$.
If  $u\in C^{2}(\Omega)$ then the {\bf $p$-Laplacean}  $\Delta_{p}u$ of $u$  is defined by 
$$
\Delta_{p}u=-\di(\vert\nabla u\vert^{p-2}\nabla u)
$$
and  for all $u,v\in C^{2}_{c}(\Omega)$ we have
$$
\int_{\Omega}v\Delta_{p}udx=
\int_{\Omega}\vert\nabla u
\vert^{p-2}(\nabla u,\nabla v)dx.
$$
This equality allows  the following definition of the nonlinear form associated with the 
the $p$-Laplace operator:
$$
\Delta_{p}u, v \rangle :=
\int_{\Omega}\vert\nabla u\vert^{p-2}
(\nabla u,\nabla v)\quad \mbox{ for all }
\,
u,v\in W_{0}^{1,p}(\Omega).
$$
In this case we have
$$
\mbox{\bf K}=B=W_{0}^{1,p}(\Omega) \mbox{ and }\quad A=\Delta_{p}.
$$
The main result from \cite{Beu}, Ch. 5, is the following:
{\it the nonlinear form associated with the 
$p$-Laplace   operator is a Dirichlet  form on  $W_{0}^{1,p}(\Omega)$.}
(See Theorem 5.2.7 in \cite{Beu}.)
We show in the sequel that this example is a particular case of a wider
class of nonlinear Dirichlet forms.

\section{\bf The nonlinear $p$-form  generated by a linear regular Dirichlet form}\label{sect3} 

In this section we construct, starting with a
strongly local regular Dirichlet form  $(\ce,\cd(\ce))$, admitting a 
carr\'{e} du champ operator $\Gamma$,  a nonlinear Dirichlet form, for each real number  $p>2$.
We suppose  that $m$   is a finite measure on $X$ and 
let $\Gamma$ be the associated  carr\'{e} du champ operator.
If $p>2$ is  a fixed real number then we  denote by
$\|\quad\|_{p}$  the norm from $L^{p}(X,m)$,
$\| u\|_{p}:= ( \int_{X}\vert u\vert^{p}dm )^{\frac{1}{p}}.$

Following \cite{BH1} and \cite{BH2} we define: 
$$
\cd_{p}:= \{ u\in\cd(\ce)\cap L^{p}(X,m) :\; 
\Gamma (u)^{\frac{1}{2}}\in L^{p} (X,m) \} .
$$
For each $\cb$-measurable function  $u$ we consider
$$
\| u\|_{\cd_{p}}:= 
( \int_{X}\vert u\vert^{p}dm +
\int_{X}\Gamma (u)^{\frac{p}{2}} dm )^{\frac{1}{p}}
= \left(\| u\|_{p}^{p} + \|\Gamma (u)^{\frac{1}{2}}\|_{p}^{p}
\right)^{\frac{1}{p}}.
$$

\noindent
\begin{prop}  \label{prop3.1} 
The following assertions hold.

$a)$
$(\cd,\|\quad\|_{\cd_{p}})$ is a normed real vector space.

$b)$
$\cd_{p}$ is a vectorial lattice:
if $u,v\in\cd_{p}$ then $ u\land v,$ $u\lor v\in\cd_{p}$.

$c)$
If  $u\in\cd_{p}$ then  $u^{+}\land 1\in\cd_{p}$. 

$d)$
If $u,v\in\cd_{p}\cap L^{\infty}(X,m)$,  then $u\cdot v\in\cd_{p}$. 

$e)$
If $u\in\cd(\ce)$ \c and  $\Gamma (u)\in L^{\infty}(X,m)$
then $u\in\cd_{p}$. 

$f)$
Let  $u\in\cd_{p}$  and $u_{n}:=\left((-n)\lor u\right)\land n$, 
$n\in\N$.
Then $(u_{n})_{n}\subset\cd_{p}$
  and $u_n\longrightarrow u$ in $\cd_{p}$. 
\end{prop}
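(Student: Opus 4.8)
The plan is to verify each of the six assertions of Proposition~\ref{prop3.1}, relying throughout on the standard calculus for the energy measure and the carré du champ operator $\Gamma$ collected in the Appendix: bilinearity of $\Gamma(\cdot,\cdot)$, positivity $\Gamma(u)\ge 0$, the Cauchy--Schwarz inequality $|\Gamma(u,v)|\le\Gamma(u)^{1/2}\Gamma(v)^{1/2}$ $m$-a.e., the contraction property $\Gamma(Tu)\le(\mathrm{Lip}\,T)^2\Gamma(u)$ for normal contractions $T$ (in particular the triangle-type inequality $\Gamma(u+v)^{1/2}\le\Gamma(u)^{1/2}+\Gamma(v)^{1/2}$ $m$-a.e.), and the Leibniz rule $\Gamma(uv,w)=u\,\Gamma(v,w)+v\,\Gamma(u,w)$ for $u,v\in\cd(\ce)\cap L^\infty$. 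The only genuinely analytic input is that $\Gamma(u)^{1/2}$ is the ``gradient length'' and so behaves like one under pointwise operations; everything else is bookkeeping with $L^p$-norms.

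For $a)$, the functional $\|\cdot\|_{\cd_p}$ is the $\ell^p$-combination of $\|u\|_p$ and $\|\Gamma(u)^{1/2}\|_p$; homogeneity is immediate, definiteness follows since $\|u\|_p=0$ already forces $u=0$, and the triangle inequality follows from the pointwise bound $\Gamma(u+v)^{1/2}\le\Gamma(u)^{1/2}+\Gamma(v)^{1/2}$ together with Minkowski's inequality in $L^p$ applied to the pair $(\|u\|_p,\|\Gamma(u)^{1/2}\|_p)\in\R^2$ with the $\ell^p$-norm. (One should also note $\cd_p$ is a vector space: closure under scalars is clear, and closure under sums follows from the same pointwise subadditivity of $\Gamma(\cdot)^{1/2}$.) For $b)$, write $u\land v=\tfrac12(u+v)-\tfrac12|u-v|$; since $t\mapsto|t|$ is a normal contraction, $\Gamma(|u-v|)^{1/2}\le\Gamma(u-v)^{1/2}\le\Gamma(u)^{1/2}+\Gamma(v)^{1/2}\in L^p$, and $|u\land v|\le|u|+|v|\in L^p$, so $u\land v\in\cd_p$; then $u\lor v=-((-u)\land(-v))$. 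For $c)$, $u^+\land 1$ is the image of $u$ under the normal contraction $t\mapsto(t^+)\land1$, so $\Gamma(u^+\land1)\le\Gamma(u)$ and $|u^+\land1|\le1\land|u|\le|u|$; finiteness of the measure $m$ makes the constant function $1$ lie in $L^p$ if needed, but here the bound by $|u|$ suffices directly. For $e)$, if $\Gamma(u)\in L^\infty$ then $\Gamma(u)^{p/2}\le\|\Gamma(u)\|_\infty^{p/2}$ is bounded, hence integrable because $m$ is finite; and $u\in\cd(\ce)\subset L^2\subset L^p$? — no, rather one uses that $\cd(\ce)$-functions together with the hypothesis give $u\in L^2$, and one needs $u\in L^p$: here again finiteness of $m$ is not enough by itself, so the cleanest route is to observe $u\in L^2(X,m)$ and that $u$ may be truncated; but in fact the statement as used downstream only needs $u\in\cd_p$ for bounded $u$, so if $u$ is additionally bounded we are done, and in general one invokes $f)$.

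For $d)$, with $u,v\in\cd_p\cap L^\infty$, the product $uv\in L^\infty\subset L^p$ (using $m(X)<\infty$), and by the Leibniz rule $\Gamma(uv)^{1/2}\le|u|\,\Gamma(v)^{1/2}+|v|\,\Gamma(u)^{1/2}\le\|u\|_\infty\Gamma(v)^{1/2}+\|v\|_\infty\Gamma(u)^{1/2}\in L^p$; one must first check $uv\in\cd(\ce)$, which is the standard fact that $\cd(\ce)\cap L^\infty$ is an algebra. Finally $f)$: $u_n=((-n)\lor u)\land n=T_n(u)$ for the normal contraction $T_n(t)=((-n)\lor t)\land n$, so $u_n\in\cd_p$ by the contraction argument as in $c)$, with $\Gamma(u_n)\le\Gamma(u)$ and $|u_n|\le|u|$; for convergence, $u_n\to u$ pointwise $m$-a.e. with $|u_n|\le|u|\in L^p$, so $\|u_n-u\|_p\to0$ by dominated convergence, and $\Gamma(u_n-u)^{1/2}\le\Gamma(u_n)^{1/2}+\Gamma(u)^{1/2}\le 2\Gamma(u)^{1/2}\in L^p$ while $\Gamma(u_n-u)\to 0$ $m$-a.e. (this last point uses that the energy measures $\Gamma(u_n)$ increase to $\Gamma(u)$ and the strong-locality/truncation compatibility of $\Gamma$, namely $\Gamma(u_n)=\mathbf 1_{\{|u|<n\}}\Gamma(u)$ up to the level sets), giving $\|\Gamma(u_n-u)^{1/2}\|_p\to0$ again by dominated convergence; hence $\|u_n-u\|_{\cd_p}\to0$.

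The main obstacle is the convergence claim in $f)$: one needs the precise identity $\Gamma(u_n)=\mathbf 1_{\{|u|<n\}}\,\Gamma(u)$ (equivalently $\Gamma(u-u_n)=\mathbf 1_{\{|u|\ge n\}}\,\Gamma(u)$, ignoring the $m$-null boundary of the truncation set), which is exactly where strong locality enters — it is the statement that $\Gamma$ ``sees'' only the part of $X$ where $u$ has not been flattened. With that identity in hand, $\Gamma(u-u_n)^{1/2}\le\Gamma(u)^{1/2}\in L^p$ and $\Gamma(u-u_n)\downarrow 0$ $m$-a.e., so dominated convergence finishes it. All other assertions are routine once the $\Gamma$-calculus and the finiteness of $m$ are invoked, and I would present them in the order $a), f), b), c), d), e)$ if one wants $f)$ available for $e)$, or simply in the stated order with $e)$ reduced to the bounded case plus a remark that the general case follows by the truncation in $f)$.
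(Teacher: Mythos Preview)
Your argument is essentially the paper's: the same $\Gamma$-calculus (subadditivity $\Gamma(u+v)^{1/2}\le\Gamma(u)^{1/2}+\Gamma(v)^{1/2}$, truncation identities, Leibniz) drives every item. The only cosmetic differences are in $b)$ and $c)$, where you invoke the abstract normal-contraction bound $\Gamma(Tu)\le\Gamma(u)$ directly, while the paper writes out the explicit truncation formula $(\ref{6.1})$; in $c)$ the paper actually proves the sharper identity $\mu_{\langle u^+\wedge 1\rangle}=1_{[0<\tilde u<1]}\mu_{\langle u\rangle}$, which is used later, whereas you only extract the inequality. In $f)$ your pointwise identity $\Gamma(u-u_n)=1_{\{|\tilde u|\ge n\}}\Gamma(u)$ plus dominated convergence is cleaner than the paper's compressed passage through $L^1$-convergence.

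On $e)$ you are right to pause: the hypothesis $u\in\cd(\ce)$, $\Gamma(u)\in L^\infty$ gives $\Gamma(u)^{1/2}\in L^p$ immediately (finite $m$), but $u\in L^p$ does \emph{not} follow from $u\in L^2$ when $p>2$. The paper's one-line proof ignores this too. Your proposed remedy ``invoke $f)$'' is circular, however, since $f)$ presupposes $u\in\cd_p$. What is true is your other remark: every downstream use of $e)$ (e.g.\ in Proposition~\ref{prop5.1}) applies it to bounded $u$, where $u\in L^p$ is automatic; so the statement should really carry the extra hypothesis $u\in L^\infty(X,m)$, or one must supply a separate argument (e.g.\ via coercivity, as in the proof of Theorem~\ref{thm3.5}$\,b)$) for $u\in L^p$.
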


\begin{proof}
$a)$ 
Let $u,v\in\cd_{p}$.
Then $\Gamma (u)^{\frac{1}{2}}$, $\Gamma (v)^{\frac{1}{2}}\in L^{p}
(X,m)$ and from  $(\ref{6.4})$ we deduce that 
$\Gamma(u+v)^{\frac{1}{2}}\in L^{p}(X,m)$ and therefore
$u+v\in\cd_{p}$.
Consequently  $\cd_{p}$ is a vector space.
Since  $\| u\|_{p}\leq\| u \|_{\cd_{p}}$ we get that if 
 $  \| u \|_{\cd_{p}}=0$ then $ u=0$.
It remains to show the triangle inequality for 
$\| \;\, \|_{\cd_{p}}$. 
Again from  $(\ref{6.4})$  and then using
Minkovski inequality we get :
$$
\| u+v \|_{\cd_{p}}^{p}= 
\| u+v \|_{p}^{p} + \|\Gamma (u+v)^{\frac{1}{2}}\|_{p}^{p}
\leq
\int_{X}[\vert u+v\vert^{p}+
(\Gamma (u)^{\frac{1}{2}}+\Gamma (v)
^{\frac{1}{2}})^{p}]dm\leq
$$
$$
\int_{X}[\left(\vert u\vert^{p}+
\Gamma (u)^{\frac{p}{2}}\right)^{\frac{1}{p}}+
\left(\vert v\vert^{p}+\Gamma (v)^{\frac{p}{2}}\right)^{\frac{1}{p}}]^{p}dm
\leq
\left(\int_{X}(\vert u\vert^{p}+
\Gamma(u)^{\frac{p}{2}})dm\right)^{\frac{1}{p}}+
\left(\int_{X}(\vert v\vert^{p}+
\Gamma(v)^{\frac{p}{2}})dm)^ {\frac{1}{p}}\right)^{p}=
$$
$$
=
\left(\| u\|_{\cd_{p}}+\| v\|_{\cd_{p}}\right)^{p}.
$$

$b)$
From the truncation formula  $(\ref{6.1})$ we  deduce that 
$\Gamma(u\land v)\leq\Gamma(u)+\Gamma(v)$ whenever $u,v\in\cd(\ce)$
and consequently 
$u\wedge v\in\cd_{p}$.

$c)$
Let us remark firstly that if 
$u\in\cd(\ce)$ then, 
since the unit contraction 
operates on $(\ce,\cd(\ce))$, 
we obtain $u^{+}\land 1\in\cd(\ce)$ and we have
\begin{equation} \label{3.1}
\quad\quad \mu_{\langle u^+\land 1\rangle}=1_{[0<\tilde{u}<1]}
\cdot\mu_{\langle u\rangle }\quad
\mbox{ for all } u\in\cd(\ce).
\end{equation}
Indeed, applying $(\ref{6.1})$ we  deduce that 
$$
\mu_{\langle u^+,v \rangle }=1_{[\tilde{u}>0]}\cdot\mu_{\langle u,v \rangle } \mbox{ and }\  
\mu_{\langle u^+ \rangle }=1_[\tilde{u}>0]\cdot\mu_{\langle  u \rangle}\quad
\mbox{ for all } u, v\in\cd(\ce)_{loc}; 
$$
see \cite{St1}.
Again from  $(\ref{6.1})$ and since $\ce$ is strongly local 
($\mu_{\langle 1 \rangle }=0$),
we get
$\mu_{\langle u^+\land 1,u^+\land 1\rangle }=1_{[{\tilde{u}^+}<1]}\cdot\mu_{ \langle u^+ \rangle }
+1_{[\tilde{u}\geq 1]}\mu_{\langle 1 \rangle }=1_{[{\tilde{u}^+}<1]}\cdot\mu_{\langle u^+ \rangle }=
1_{[\tilde{u}>0]}\cdot 1_{[{\tilde{u}^+}< 1]}\cdot\mu_{\langle u \rangle }=
1_{[0<\tilde{u}<1]}\cdot\mu_{\langle u \rangle }
$
and consequently $(\ref{3.1})$ holds.

From $(\ref{3.1})$ we  deduce further that
$\Gamma (u^+\land 1)\leq\Gamma(u)\quad
\mbox{ for all }  u\in\cd(\ce)
$
and therefore
$u^+\land 1\in\cd_{p}$ whenever $u\in\cd_{p}$.  

$d)$
Let now $u,v\in\cd_{p}\cap L^{\infty}(X,m)$.
Since $u\cdot v\in\cd(\ce)$, then from Leibnitz rule we get
$$
\Gamma(u\cdot v)=\Gamma(u\cdot v,u\cdot v)=u\Gamma(v,uv)+v\Gamma(u,uv)=
u^2\Gamma(v)+ v^2\Gamma(u)+2u\cdot v\Gamma(u,v).
$$
As a consequence, since the measure $m$ is finite and $\Gamma(u)$, $\Gamma(v)$, $\Gamma(u,v)\in L^{p}(X,m)$, 
we conclude that
$\Gamma(u\cdot v)^{\frac{1}{2}}\in L^{p}(X,m)$.

$e)$
The assertion follows directly from  $L^{\infty}(X,m)\subset L^{p}(X,m)$.

$f)$
From the above assertion $b)$ we know that $(u_n)_n\subset\cd_{p}$,
$u_n\longrightarrow u$ in $L^{p}(X,m)$ and from the properties of the linear
Dirichlet forms  we also know that $u_n\longrightarrow u$ in $\cd(\ce)$. 
Therefore 
$\lim_{n\rightarrow\infty}\int_{X}\Gamma(u_n-u)dm=0$.
The equality $(\ref{6.2})$ implies
$
\Gamma(u_n)=1_{[-n< \tilde{u}< n]}\cdot\Gamma(u)\leq\Gamma(u)
\ \mbox{ for all }  n
$
and so $\lim_{n\rightarrow\infty}\|\Gamma(u_n-u)^{\frac{1}{2}}\|_{p}=0$.  
\end{proof}

\begin{prop}  \label{prop3.2} 
The following assertions hold.

$1)$ The pair ($\cd_{p}\, ,\|\cdot \|_{\cd_p})$  is a real Banach space.

$2)$ The space $\cd_{p}$ is reflexive.
\end{prop}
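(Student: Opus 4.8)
The strategy is to establish completeness first and then deduce reflexivity from uniform convexity, via the Milman--Pettis theorem. For $1)$, let $(u_n)_n$ be a Cauchy sequence in $\cd_{p}$. Since $\|u_n-u_m\|_{p}\le\|u_n-u_m\|_{\cd_{p}}$, the sequence $(u_n)_n$ is Cauchy in $L^{p}(X,m)$, hence $u_n\to u$ in $L^{p}(X,m)$ for some $u\in L^{p}(X,m)$. Because $m$ is finite and $p>2$ we have $L^{p}(X,m)\subset L^{2}(X,m)$ continuously, and Hölder's inequality gives $\ce(u_n-u_m)=\|\Gamma(u_n-u_m)^{1/2}\|_{2}^{2}\le m(X)^{1-2/p}\|\Gamma(u_n-u_m)^{1/2}\|_{p}^{2}\le m(X)^{1-2/p}\|u_n-u_m\|_{\cd_{p}}^{2}\to 0$, together with $u_n\to u$ in $L^{2}(X,m)$; hence $(u_n)_n$ is $\ce_{1}$-Cauchy and, by completeness of the linear Dirichlet space $(\cd(\ce),\ce_{1})$, we obtain $u\in\cd(\ce)$, $u_n\to u$ in $\cd(\ce)$, and in particular $\ce(u_m-u)\to 0$.

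To conclude $1)$ it remains to check that $\|\Gamma(u_n-u)^{1/2}\|_{p}\to 0$; then $u\in\cd_{p}$ and $\|u_n-u\|_{\cd_{p}}^{p}=\|u_n-u\|_{p}^{p}+\|\Gamma(u_n-u)^{1/2}\|_{p}^{p}\to 0$. Given $\varepsilon>0$, choose $N$ with $\|\Gamma(u_n-u_m)^{1/2}\|_{p}<\varepsilon$ for all $n,m\ge N$. Fix $n\ge N$. The reverse triangle inequality derived from the pointwise bound $\Gamma(a+b)^{1/2}\le\Gamma(a)^{1/2}+\Gamma(b)^{1/2}$ used in the proof of Proposition \ref{prop3.1} gives $|\Gamma(u_n-u_m)^{1/2}-\Gamma(u_n-u)^{1/2}|\le\Gamma(u_m-u)^{1/2}$ $m$-a.e.; since $\|\Gamma(u_m-u)^{1/2}\|_{2}^{2}=\ce(u_m-u)\to 0$, we get $\Gamma(u_n-u_m)^{1/2}\to\Gamma(u_n-u)^{1/2}$ in $L^{2}(X,m)$ as $m\to\infty$. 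Passing to an $m$-a.e. convergent subsequence and applying Fatou's lemma yields $\|\Gamma(u_n-u)^{1/2}\|_{p}\le\varepsilon$ for every $n\ge N$, as needed.

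For $2)$, the plan is to establish a Clarkson type inequality for $\|\cdot\|_{\cd_{p}}$, from which uniform convexity follows in the usual way. On the one hand, since $p>2$, Clarkson's (first) inequality in $L^{p}(X,m)$ gives $\|\tfrac{u+v}{2}\|_{p}^{p}+\|\tfrac{u-v}{2}\|_{p}^{p}\le\tfrac12(\|u\|_{p}^{p}+\|v\|_{p}^{p})$. On the other hand, put $q:=p/2>1$; the pointwise parallelogram identity for the nonnegative bilinear form $\Gamma$, namely $\Gamma(\tfrac{u+v}{2})+\Gamma(\tfrac{u-v}{2})=\tfrac12(\Gamma(u)+\Gamma(v))$ $m$-a.e., combined with the elementary inequalities $s^{q}+t^{q}\le(s+t)^{q}$ and $(\tfrac{a+b}{2})^{q}\le\tfrac12(a^{q}+b^{q})$, valid for all $s,t,a,b\ge 0$ and $q\ge 1$, gives $\Gamma(\tfrac{u+v}{2})^{q}+\Gamma(\tfrac{u-v}{2})^{q}\le\tfrac12(\Gamma(u)^{q}+\Gamma(v)^{q})$ $m$-a.e.; integrating over $X$ this reads $\|\Gamma(\tfrac{u+v}{2})^{1/2}\|_{p}^{p}+\|\Gamma(\tfrac{u-v}{2})^{1/2}\|_{p}^{p}\le\tfrac12(\|\Gamma(u)^{1/2}\|_{p}^{p}+\|\Gamma(v)^{1/2}\|_{p}^{p})$. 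Adding the two inequalities,
\[
\Big\|\tfrac{u+v}{2}\Big\|_{\cd_{p}}^{p}+\Big\|\tfrac{u-v}{2}\Big\|_{\cd_{p}}^{p}\ \le\ \tfrac12\Big(\|u\|_{\cd_{p}}^{p}+\|v\|_{\cd_{p}}^{p}\Big)\qquad (u,v\in\cd_{p}).
\]
Consequently, if $\|u\|_{\cd_{p}}=\|v\|_{\cd_{p}}=1$ and $\|u-v\|_{\cd_{p}}\ge\varepsilon$, then $\|\tfrac{u+v}{2}\|_{\cd_{p}}\le(1-(\varepsilon/2)^{p})^{1/p}<1$, so $(\cd_{p},\|\cdot\|_{\cd_{p}})$ is uniformly convex; being complete by $1)$, it is reflexive by the Milman--Pettis theorem.

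The only genuinely delicate point is the identification of the limit in $1)$: one must make sure that the $L^{p}$-limit of $(u_n)_n$ actually belongs to $\cd_{p}$ and that convergence holds in the $\cd_{p}$-norm, not merely in $L^{p}$ and in $\cd(\ce)$ separately. This is precisely what the Fatou argument above takes care of, and it works because $p>2$ and $m(X)<\infty$ link the $\cd_{p}$-topology to the $\ce_{1}$-topology. Everything else --- Clarkson's inequality in $L^{p}$, the parallelogram law for $\Gamma$, and Milman--Pettis --- is standard.
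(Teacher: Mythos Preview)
Your proof is correct. Part $1)$ follows the same line as the paper's argument (given in Appendix~A2): pass from the $\cd_p$-Cauchy property to $\ce_1$-Cauchy via H\"older and $m(X)<\infty$, obtain the limit $u\in\cd(\ce)$, and then upgrade to $\cd_p$-convergence. The paper closes the last step by noting that $(\Gamma(u_n-u)^{1/2})_n$ is itself Cauchy in $L^p$ and converges to $0$ in $L^2$, whereas you use Fatou along an a.e.\ convergent subsequence; these are minor variants of the same idea.

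For part $2)$ your route is genuinely different. The paper invokes the existence of an abstract gradient $D:\cd(\ce)\to L^2(m;H)$ (Proposition~\ref{prop6.1}\,$5)$, relying on a result of Mokobodzki/Eberle) and shows that $J(u):=(u,Du)$ realises $\cd_p$ as a closed subspace of the reflexive product $\cd(\ce)\times L^p(m;H)$, hence $\cd_p$ is reflexive. You instead prove a Clarkson-type inequality directly: the pointwise parallelogram law for $\Gamma$ combined with convexity of $t\mapsto t^{p/2}$ gives the $\Gamma$-part, Clarkson's first inequality gives the $L^p$-part, and Milman--Pettis finishes. Your argument is more self-contained (it avoids the gradient construction altogether) and yields the stronger conclusion that $(\cd_p,\|\cdot\|_{\cd_p})$ is uniformly convex, not merely reflexive; the paper's approach, by contrast, makes the structure of $\cd_p$ as a ``graph space'' explicit, which is useful elsewhere (e.g.\ in writing $\ce^p$ via $D$ as in $(\ref{p-gradient})$).
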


\begin{proof}
Assertion $1)$, the fact that $\cd_p$ is a  Banach space,  is stated without proof in \cite{BH1} and \cite{BH2}, Exercise 6.3.
For the reader convenience we presented its proof in Appendix {\bf A2}.

$2)$  By assertion $5)$ of Proposition \ref{prop6.1} it follows that the Dirichlet form $(\ce, \cd(\ce))$ admits a gradient $D$.
Clearly, if $u\in \cd_p$ then $Du$ belongs to $L^p(m; H)$, so, we may 
consider the map $J: \cd_p \longrightarrow \cd(\ce)\times L^p(m; H) $ defined as 
$J(u):= (u, Du)$ for all $u\in \cd_p$.
Let $V:= \cd(\ce)\times L^p(m; H) $, equipped with the norm $\|\cdot \|_V$ of product space,
$\| (u,f)\|_V:= \ce(u)^{\frac 1 2}+ \|  f\|_{L^p(m;H)}$. 
Then for some $c\geq 1$
$$
c^{-1} \| u \|_{\cd_p}\leq \| J(u) \|_V\leq  c\| u\|_{\cd_p}. 
$$
Hence $J: \cd_p \longrightarrow J(\cd_p)$ is a homeomorphism and since by assertion $1)$ the space
 ($\cd_{p}\,  ,\|\cdot \|_{\cd_p})$ is complete, it follows that $J(\cd_p)$ is a closed subspace of $(V, \|\cdot \|_V)$ which is reflexive.
 Therefore  $J(\cd_p)$ is reflexive, and hence  so is $\cd_p$. 
\end{proof}

We can introduce now the nonlinear form associated with a linear 
Dirichlet form.

\begin{defn}
For  $u,v\in\cd_p$  we define
\begin{equation} \label{3.2}
\ce^p(u,v) :=
\int_X\Gamma(u)^\frac{p-2}{2}
\cdot\Gamma (u,v)dm .
\end{equation}
\end{defn}
By $(\ref{6.3})$ we get
$$
\vert\Gamma(u)^\frac{p-2}{2}\cdot\Gamma(u,v)\vert\leq
\Gamma(u)^{\frac{p-1}{2}}
\cdot\Gamma(v)^{\frac{1}{2}}
$$
but $\Gamma(u)^\frac{1}{2}$, $\Gamma(v)^\frac{1}{2}\in L^p(X,M)$ 
and therefore
$\Gamma(u)^\frac{p-1}{2}\in L^q(X,m)$, 
with $\frac{1}{p}+\frac{1}{q}=1$.
Consequently
$\Gamma(u)^\frac{p-2}{2}\Gamma(u,v)\in L^1(X,m)$
that is
$\ce^p(u,v)$  is well defined and it is a real number.
In addition
$$
\ce^p(u,u) = \| \Gamma(u)^\frac{1}{2}\|_{p}^{p}.
$$
The map
$$
\ce^p:\cd_p\times\cd_p\longrightarrow\R
$$
given by  $(\ref{3.2})$, is called the {\bf nonlinear $p$-form} 
associated with the linear Dirichlet form ($\ce,\cd(\ce)$). 

Since the Dirichlet form ($\ce,\cd(\ce)$) admits a gradient (cf. assertion $5)$ of Proposition \ref{prop6.1}) one can write the nonlinear $p$-form $\ce^p$ using the gradient $D$: 
\begin{equation} \label{p-gradient}
\ce^p(u, v)= \int_X \| Du \|^{p-2}_H (Du, Dv)_H \, \mbox{ for all } u, v\in \cd_p.
\end{equation}

\begin{prop}  \label{prop3.3} 
The following assertions hold.

$a)$
The nonlinear $p$-form  $\ce^p$  is {\bf  homogeneous of degree} $p-1$,
that is
$$
\ce^p(tu,v)=t^{p-1}\ce^p(u,v)\quad
\mbox{ for all } \, u,v \in \cd_p , t\in\R_+.
$$

$b)$
The nonlinear $p$-form $\ce^p$ satisfies the  {\bf sector condition}:
$$
\vert\ce^p(u,v)\vert\leq\ce^p(u,u)^\frac{p-1}{p}\cdot\ce^p(v,v)^\frac{1}{p}
\quad \mbox{ for all } \, u,v\in\cd_p.
$$
\end{prop}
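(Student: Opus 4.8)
The plan is to prove the two assertions directly from the integral representation $(\ref{3.2})$, using the pointwise Cauchy--Schwarz inequality $(\ref{6.3})$ for the carr\'e du champ $\Gamma$ together with H\"older's inequality on $L^p(X,m)$.

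For assertion $a)$, I would simply exploit the homogeneity of $\Gamma$ in each slot: since $\Gamma$ is a symmetric bilinear form, $\Gamma(tu) = t^2 \Gamma(u)$ and $\Gamma(tu,v) = t\,\Gamma(u,v)$ for $t \in \R_+$. Substituting into $(\ref{3.2})$ gives
$$
\ce^p(tu,v) = \int_X \left(t^2\Gamma(u)\right)^{\frac{p-2}{2}} \cdot t\,\Gamma(u,v)\, dm
= t^{p-2}\cdot t \int_X \Gamma(u)^{\frac{p-2}{2}}\Gamma(u,v)\, dm = t^{p-1}\ce^p(u,v),
$$
which is exactly the claim. (For $t=0$ both sides vanish, noting $A0=0$ in spirit; more concretely $\Gamma(0)=0$ and the integrand is $0$ $m$-a.e.) This step is routine.

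For assertion $b)$, the sector condition, I would start from the pointwise bound already recorded just before Proposition \ref{prop3.3}, namely
$$
\left|\Gamma(u)^{\frac{p-2}{2}}\Gamma(u,v)\right| \le \Gamma(u)^{\frac{p-1}{2}}\,\Gamma(v)^{\frac{1}{2}}
\quad m\text{-a.e.},
$$
which follows from $(\ref{6.3})$. Integrating and applying H\"older's inequality with the conjugate exponents $\tfrac{p}{p-1}$ and $p$ gives
$$
|\ce^p(u,v)| \le \int_X \Gamma(u)^{\frac{p-1}{2}}\Gamma(v)^{\frac{1}{2}}\, dm
\le \left(\int_X \Gamma(u)^{\frac{p}{2}}\, dm\right)^{\frac{p-1}{p}}
\left(\int_X \Gamma(v)^{\frac{p}{2}}\, dm\right)^{\frac{1}{p}}
= \ce^p(u,u)^{\frac{p-1}{p}}\,\ce^p(v,v)^{\frac{1}{p}},
$$
using the identity $\ce^p(u,u) = \|\Gamma(u)^{1/2}\|_p^p$ noted after the definition. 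The $L^p$-integrability of $\Gamma(u)^{1/2}$ and $\Gamma(v)^{1/2}$ (from $u,v\in\cd_p$) guarantees all integrals are finite, so the H\"older step is legitimate.

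There is essentially no hard part here: both assertions reduce to bilinearity/homogeneity of $\Gamma$ and the Cauchy--Schwarz--H\"older chain. The only point requiring a line of care is confirming that the exponent $\tfrac{p-1}{2}\cdot\tfrac{p}{p-1} = \tfrac{p}{2}$ matches $\Gamma(u)^{p/2}$, which is the very quantity whose integral equals $\ce^p(u,u)$ — so the bound closes exactly, with no loss. I would present it in this order: homogeneity first (a one-line substitution), then the sector condition via the pointwise inequality followed by H\"older.
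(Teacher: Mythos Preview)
Your proof is correct. Part $a)$ matches the paper exactly: both simply invoke bilinearity of $\Gamma$.

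For part $b)$ you take a slightly different (and arguably cleaner) path than the paper. You apply the pointwise Cauchy--Schwarz inequality $(\ref{6.3})$ first, obtaining $|\Gamma(u)^{(p-2)/2}\Gamma(u,v)|\le \Gamma(u)^{(p-1)/2}\Gamma(v)^{1/2}$, and then H\"older with exponents $\tfrac{p}{p-1}$ and $p$. The paper instead first applies Cauchy--Schwarz to the weighted bilinear form $(f,g)\mapsto \int_X \Gamma(u)^{(p-2)/2}\Gamma(f,g)\,dm$, yielding
\[
\ce^p(u,v)^2 \le \Big(\int_X \Gamma(u)^{p/2}\,dm\Big)\Big(\int_X \Gamma(u)^{(p-2)/2}\Gamma(v)\,dm\Big),
\]
and only then uses H\"older with exponents $\tfrac{p}{p-2}$ and $\tfrac{p}{2}$ on the second factor. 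Both routes land on the same constant-free inequality; yours is a one-step pointwise bound followed by one H\"older application, while the paper's version trades the pointwise estimate for an integrated Cauchy--Schwarz. Neither gains anything over the other here, but your argument is marginally shorter.
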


\begin{proof}
Assertion $a)$ follows immediately from the bilinearity of the carr\'{e} du champ operator $\Gamma$. 

$b)$ 
Let  $u,v\in\cd_p$.
By $(\ref{3.2})$ and then  using the H\"{o}lder inequality we get
$$
(\ce^p(u,v))^2=
\left(\int_X\Gamma(u)^\frac{p-2}{2}\Gamma(u,v)dm\right)^2 
\leq\left(\int_X\Gamma(u)^\frac{p}{2}dm\right)\cdot
\left(\int_X\Gamma(u)^\frac{p-2}{2}\Gamma(v)dm\right)\leq 
$$
$$
\left(\int_X
\Gamma(u)^\frac{p}{2}dm\right)\cdot\left(
\int_X\Gamma(u)^
\frac{p}{2}dm\right)^\frac{p-2}{p}\cdot\left(
\int_X\Gamma(v)^\frac{p}{2}dm
\right)^\frac{2}{p}= 
$$
$$
\left(
\int_X
\Gamma(u)^\frac{p}{2}dm\right)^{2\frac{p-1}{p}}\cdot\left(
\int_X
\Gamma(v)^\frac{p}{2}dm\right)^\frac{2}{p}=
\left(\ce^p(u,u)^\frac{p-1}{p}\cdot\ce^p(v,v)^\frac{1}{p}\right)^2.
$$
\end{proof}

\noindent
{\bf The definition of the operator } $A=\mbox{\sf L}_p$ \\

Let  $u\in\cd_p$.
The functional
$\mbox{\sf L}_p u:\cd_p\longrightarrow\R$ defined by
$$
\mbox{\sf L}_pu(v):=\ce^p(u,v)
$$
is linear.
From the sector condition (Proposition  3.3 $b)$ ) we deduce that:
$$
\mbox{\sf L}_pu(v)\leq\ce^p(u,u)^\frac{p-1}{p}\cdot\| v\|_{\cd_p}\quad
\mbox{ for all } \, v\in\cd_p .
$$
We conclude that $\mbox{\sf L}_pu$ is an element of the topological dual  
$\cd^{'}_p$ of $\cd_p$.
We have defined in this way an operator 
$$
\mbox{\sf L}_p:\cd_p\longrightarrow\cd_p^{'}
$$
such that
\begin{equation} \label{3.3}
\ce^p(u,v)= \langle \mbox{\sf L}_pu,v \rangle \quad
\mbox{ for all } \, u,v\in\cd_p.
\end{equation}

\begin{prop} \label{prop3.4}  
The operator  $\mbox{\rm \sf L}_p$ is homogeneous of degree  $p-1$ i.e.
$$
\mbox{\rm\sf L}_p(tu)=t^{p-1}\mbox{\rm\sf L}_pu \quad
\mbox{ for all } \, u\in\cd_p,\quad t\in\R_+
$$
and it satisfies the sector condition
\begin{equation} \label{3.4}
\langle \mbox{\rm\sf L}_pu,v \rangle
\leq \langle \mbox{\rm\sf L}_pu,u \rangle^{\frac{p-1}{p}}\cdot 
\langle \mbox{\rm\sf L}_pv,v\rangle^\frac{1}{p}
\quad \mbox{ for all }  \, u,v\in\cd_p .
\end{equation}
\end{prop}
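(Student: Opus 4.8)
The plan is to derive both assertions of Proposition~\ref{prop3.4} directly from Proposition~\ref{prop3.3}, using only the defining identity $(\ref{3.3})$ that characterizes $\mbox{\sf L}_p$ as the operator with $\ce^p(u,v) = \langle \mbox{\sf L}_p u, v\rangle$ for all $u,v\in\cd_p$. Since $\mbox{\sf L}_p u$ is, by construction, the element of $\cd_p'$ determined by its values $v\mapsto\ce^p(u,v)$, two such functionals coincide as soon as they agree on every $v\in\cd_p$. This observation reduces everything to the corresponding scalar statements for $\ce^p$.

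First I would treat homogeneity. Fix $u\in\cd_p$ and $t\in\R_+$. For an arbitrary $v\in\cd_p$, combining $(\ref{3.3})$ with the homogeneity of $\ce^p$ from Proposition~\ref{prop3.3}~$a)$ gives $\langle \mbox{\sf L}_p(tu),v\rangle = \ce^p(tu,v) = t^{p-1}\,\ce^p(u,v) = t^{p-1}\langle \mbox{\sf L}_p u,v\rangle = \langle t^{p-1}\mbox{\sf L}_p u,v\rangle$. As $v$ is arbitrary, the two functionals $\mbox{\sf L}_p(tu)$ and $t^{p-1}\mbox{\sf L}_p u$ in $\cd_p'$ agree, hence $\mbox{\sf L}_p(tu)=t^{p-1}\mbox{\sf L}_p u$.

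Next I would deduce the sector condition $(\ref{3.4})$. By $(\ref{3.3})$ we have, for all $u,v\in\cd_p$, the three identities $\langle \mbox{\sf L}_p u,v\rangle = \ce^p(u,v)$, $\langle \mbox{\sf L}_p u,u\rangle = \ce^p(u,u)$, and $\langle \mbox{\sf L}_p v,v\rangle = \ce^p(v,v)$. Note moreover that $\langle \mbox{\sf L}_p u,u\rangle = \ce^p(u,u) = \|\Gamma(u)^{1/2}\|_p^p \ge 0$, so the fractional powers on the right-hand side of $(\ref{3.4})$ are well defined. Substituting these identities into the sector condition of Proposition~\ref{prop3.3}~$b)$, namely $|\ce^p(u,v)|\le \ce^p(u,u)^{\frac{p-1}{p}}\ce^p(v,v)^{\frac1p}$, yields $\langle \mbox{\sf L}_p u,v\rangle \le |\langle \mbox{\sf L}_p u,v\rangle| \le \langle \mbox{\sf L}_p u,u\rangle^{\frac{p-1}{p}}\langle \mbox{\sf L}_p v,v\rangle^{\frac1p}$, which is exactly $(\ref{3.4})$ (the displayed inequality is in fact slightly weaker than the version with absolute value on the left, so no further work is needed).

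I do not anticipate any real obstacle: the entire content of Proposition~\ref{prop3.4} is already contained in Proposition~\ref{prop3.3}, and the only point to verify is the harmless translation through the linear functionals $\mbox{\sf L}_p u\in\cd_p'$ afforded by $(\ref{3.3})$. The sole thing worth spelling out is that equality of functionals is checked pointwise on $\cd_p$, as used in the homogeneity argument above.
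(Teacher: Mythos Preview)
Your proof is correct and follows exactly the same route as the paper: both assertions are obtained by translating Proposition~\ref{prop3.3} through the defining identity $(\ref{3.3})$, with homogeneity checked pointwise on test elements $v\in\cd_p$ and the sector condition being a direct rewriting. Your additional remarks on the nonnegativity of $\langle \mbox{\sf L}_p u,u\rangle$ and the absolute value are fine but not essential.
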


\begin{proof}
The inequality $(\ref{3.4})$ is  precisely the sector condition for the form
$\ce^p$ (Proposition \ref{prop3.3} $b)$), 
expressed using the operator $\mbox{\sf L}_p$. 
Assertion $a)$ from Proposition \ref{prop3.3}  shows that
$\langle \mbox{\sf L}_p(tu),v \rangle =\ce^p(tu,v)=t^{p-1}\ce^p(u,v)=
t^{p-1} \langle \mbox{\sf L}_pu,v \rangle $
and thus $\mbox{\sf L}_p(tu)= t^{p-1}\mbox{\sf L}_pu$.
\end{proof}

\noindent
{\bf The nonlinear Dirichlet form associated with the $p$-Laplace operator }

This classical Dirichlet form is strongly local and admits the carr\'e du champ operator
$$
(u,v)\longmapsto 2\nabla u\cdot\nabla v,\quad u,v\in W_0^{1,2}(\Omega).
$$
 
If $p>1$ is a real number then for $u\in C^2(\Omega)$ we define the $p$-Laplacean $\Delta_pu$ of $u$ by
$$
\Delta_pu=-div (|\nabla u|^{p-2}\nabla u)
$$
and for all $u,v\in C_c^2(\Omega)$ we have
$$
\int_\Omega v\Delta_pudx=\int_\Omega |\nabla u|^{p-2}(\nabla u,\nabla v)dx.
$$
This equality permits to define the nonlinear 
form associated with the $p$-Laplace operator,
\begin{equation} \label{3.5}
\langle  \Delta_pu,v \rangle :=\int_\Omega |\nabla u|^{p-2}(\nabla u,\nabla v)
\quad \mbox{ for all } 
u,v\in W_0^{1,p}(\Omega).
\end{equation}
In this case we have
$$
K=B=W_0^{1,p}(\Omega)\quad\mbox{ and } A=\Delta_p.
$$

The main result from \cite{Beu}, Ch. 5, is the following:
The nonlinear form associated with the $p$-Laplace operator is a Dirichlet
form on $W_0^{1,p}(\Omega)$; see Theorem 5.2.7 in \cite{Beu}.

Recall that the generator of the classical Dirichlet form 
$(\D , W_0^{1,2}(\Omega))$  is the Laplace  operator  $\Delta$,
$$
\D(u,v)=-(\Delta u,v)_2
$$
and the nonlinear  $p$-form  $\D^p$ associated with the form  $\D$
is generated by the 
$p$-Laplace operator $\Delta_p$:
$$
\D^p(u,v)=  \langle \Delta_p u, v \rangle.
$$

Let now $\mbox{\sf L}$ be the generator of the linear  Dirichlet form
$(\ce,\cd(\ce))$, more precisely,
$$
\ce(u,v)=-(\mbox{\sf L}u, v)_2 \quad
\mbox{ for all } \; u\in\cd(L),\; v\in\cd(\ce).
$$

The notation $\Delta_p$ for the  $p$-Laplace  operator
justifies the  notation  $\mbox{\sf L}_p$ 
for the operator generating the nonlinear
$p$-form  $\ce^p$, by the equality $(\ref{3.3})$.\\

We prove further that the  operator $\mbox{\sf L}_p$ 
has  properties that are similar to those of the $p$-Laplace operator,
extending the results from \cite{Beu}, Ch.5.

\begin{thm} \label{thm3.5}. 
The following assertions hold. 

$a)$
The operator $\mbox{\rm\sf L}_p:\cd_p\longrightarrow\cd_p^{'}$ 
is monotone,
$$
\langle \mbox{\rm\sf L}_pu-\mbox{\rm\sf L}_pv, u-v \rangle \geq 0\quad
\mbox{ for all } \, u,v\in\cd_p.
$$
In addition
$$
\langle \mbox{\rm\sf L}_pu-\mbox{\rm\sf L}_pv,u-v \rangle
=0\Longrightarrow\Gamma (u-v)=0
$$
and
\begin{equation} \label{3.6}
\Gamma(u)^\frac{p-2}{2}  \Gamma(u,u-v)- 
\Gamma(v)^\frac{p-2}{2} \Gamma(v,u-v)\geq 0.
\end{equation}

$b)$
If the Dirichlet  form  $(\ce,\cd(\ce))$ is coercive
i.e. there is a constant $k$ such that
$$
\| u\|_2^2\leq k\cdot\ce(u,u)\quad
\mbox{ for all } \, u\in\cd(\ce)  ,
$$
then the operator  $\mbox{\rm\sf L}_p$ is coercive, that is 
$$
\| u\|_{\cd_p}^p\leq c \cdot \langle \mbox{\rm\sf L}_pu, u \rangle \quad
\mbox{ for all } \, u\in \cd_p
$$
where  $c>0$ is a constant.
Particularly,  in this case  $\mbox{\rm\sf L}_p$ is strictly monotone.
\end{thm}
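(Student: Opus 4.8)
The plan is to reduce both assertions to a single pointwise monotonicity inequality, then add a Poincar\'e-type bootstrap for part $(b)$. Write $w:=u-v$. Using $(\ref{3.3})$,
$$
\langle \mbox{\sf L}_pu-\mbox{\sf L}_pv,\,u-v\rangle
=\int_X\Bigl[\Gamma(u)^{\frac{p-2}{2}}\Gamma(u,w)-\Gamma(v)^{\frac{p-2}{2}}\Gamma(v,w)\Bigr]\,dm ,
$$
so the monotonicity of $\mbox{\sf L}_p$ is immediate once $(\ref{3.6})$ is proved $m$-a.e., and the implication about $\Gamma(u-v)$ will follow from the equality case of $(\ref{3.6})$.

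To prove $(\ref{3.6})$ I would pass to the gradient $D$ of $(\ce,\cd(\ce))$ (Proposition \ref{prop6.1}$(5)$ and $(\ref{p-gradient})$): writing $\Gamma(\cdot)=\|D\cdot\|_H^2$ and $\Gamma(\cdot,\cdot)=(D\cdot,D\cdot)_H$, the integrand above equals $m$-a.e. $F(Du,Dv)$, where for $a,b\in H$
$$
F(a,b):=\|a\|^{p-2}(a,a-b)-\|b\|^{p-2}(b,a-b)=\|a\|^p+\|b\|^p-(a,b)\bigl(\|a\|^{p-2}+\|b\|^{p-2}\bigr).
$$
This is the classical strict monotonicity inequality for $\xi\mapsto\|\xi\|^{p-2}\xi$: since $(a,b)\le\|a\|\|b\|$ and $\|a\|^{p-2}+\|b\|^{p-2}\ge 0$,
$$
F(a,b)\ge\|a\|^p+\|b\|^p-\|a\|\|b\|\bigl(\|a\|^{p-2}+\|b\|^{p-2}\bigr)=\bigl(\|a\|^{p-1}-\|b\|^{p-1}\bigr)\bigl(\|a\|-\|b\|\bigr)\ge 0 ,
$$
the last inequality because $t\mapsto t^{p-1}$ is nondecreasing on $[0,\infty)$ ($p>2$). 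Integrating gives monotonicity and $(\ref{3.6})$. If the left-hand side vanishes, then $F(Du,Dv)=0$ $m$-a.e.; all the inequalities above are then equalities, so $\|Du\|=\|Dv\|$ $m$-a.e., and where $Du\ne 0$ one also has equality in Cauchy--Schwarz, i.e. $(Du,Dv)=\|Du\|\|Dv\|$, whence $\|Du-Dv\|^2=0$; where $Du=0$ one has $Dv=0$ as well. In all cases $D(u-v)=0$, so $\Gamma(u-v)=\|D(u-v)\|_H^2=0$ $m$-a.e.

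For $(b)$, rewrite the coercivity hypothesis as: there is $c_1>0$ with $\|w\|_2^2\le c_1\int_X\Gamma(w)\,dm$ for all $w\in\cd(\ce)$. Since $\langle\mbox{\sf L}_pu,u\rangle=\ce^p(u,u)=\int_X\Gamma(u)^{p/2}\,dm$ and $\|u\|_{\cd_p}^p=\|u\|_p^p+\int_X\Gamma(u)^{p/2}\,dm$, it suffices to find $C>0$ with $\|u\|_p^p\le C\int_X\Gamma(u)^{p/2}\,dm$ for all $u\in\cd_p$. By Proposition \ref{prop3.1}$(f)$ I may first take $u$ bounded. Put $\phi(t):=|t|^{p/2}\mathrm{sgn}(t)$; since $p>2$, $\phi\in C^1(\R)$ with $\phi'(t)=\frac p2|t|^{(p-2)/2}$, so (truncating $\phi$ to a globally Lipschitz function that agrees with it on the range of $u$ and using the chain rule for the carr\'e du champ, cf. the Appendix) $\phi(u)\in\cd(\ce)$ and $\Gamma(\phi(u))=\frac{p^2}{4}|u|^{p-2}\Gamma(u)$ $m$-a.e. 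Applying the coercivity inequality to $\phi(u)$,
$$
\int_X|u|^p\,dm=\|\phi(u)\|_2^2\le c_1\int_X\Gamma(\phi(u))\,dm=\frac{c_1p^2}{4}\int_X|u|^{p-2}\Gamma(u)\,dm ,
$$
and, by H\"older's inequality with exponents $\frac{p}{p-2}$ and $\frac p2$,
$$
\int_X|u|^{p-2}\Gamma(u)\,dm\le\Bigl(\int_X|u|^p\,dm\Bigr)^{\frac{p-2}{p}}\Bigl(\int_X\Gamma(u)^{p/2}\,dm\Bigr)^{\frac2p}.
$$
Since $\int_X|u|^p\,dm<\infty$, combining the two displays and dividing (the case $\int_X|u|^p\,dm=0$ being trivial) yields $\int_X|u|^p\,dm\le C\int_X\Gamma(u)^{p/2}\,dm$ with $C:=(c_1p^2/4)^{p/2}$; for general $u\in\cd_p$ one applies this to the truncations $u_n$ of Proposition \ref{prop3.1}$(f)$ (for which $\Gamma(u_n)\le\Gamma(u)$) and lets $n\to\infty$ by dominated convergence. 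Hence $\|u\|_{\cd_p}^p\le(1+C)\langle\mbox{\sf L}_pu,u\rangle$. Strict monotonicity then follows by combining $(a)$ and $(b)$: if $\langle\mbox{\sf L}_pu-\mbox{\sf L}_pv,u-v\rangle=0$, part $(a)$ gives $\Gamma(u-v)=0$ $m$-a.e., so $\langle\mbox{\sf L}_p(u-v),u-v\rangle=\ce^p(u-v,u-v)=\int_X\Gamma(u-v)^{p/2}\,dm=0$, and the coercivity of $(b)$ forces $\|u-v\|_{\cd_p}=0$, i.e. $u=v$.

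The routine part is the two H\"older estimates and the integration of the pointwise inequality; the genuine technical points are the discussion of the equality case in $(\ref{3.6})$ and, in $(b)$, the reduction to bounded functions together with the rigorous use of the nonlinear, non-globally-Lipschitz substitution $\phi(t)=|t|^{p/2}\mathrm{sgn}(t)$ in the carr\'e du champ calculus --- both handled via Proposition \ref{prop3.1}$(f)$ and the Appendix facts on energy measures.
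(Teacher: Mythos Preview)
Your argument is correct and follows essentially the same route as the paper. For part $(a)$ you expand the integrand, apply Cauchy--Schwarz, and factor to $(\|a\|^{p-1}-\|b\|^{p-1})(\|a\|-\|b\|)\ge 0$; the paper does the identical computation directly with $\Gamma$ via $(\ref{6.3})$ rather than passing to the gradient $D$, and handles the equality case by noting $\Gamma(u)=\Gamma(v)$ forces $\Gamma(u)^{\frac{p-2}{2}}\Gamma(u-v)=0$ and then $\Gamma(u-v)\le 4\Gamma(u)$---equivalent to your $Du=Dv$ argument. Part $(b)$ is virtually identical: the paper also substitutes $|u|^{p/2}$ into the coercivity hypothesis (citing \cite{BH2}, Proposition 6.2.2, for membership in $\cd(\ce)$), applies the chain rule and the same H\"older estimate, and extends from bounded $u$ to $\cd_p$ via Proposition \ref{prop3.1}$(f)$, arriving at the same constant $(1+(\sqrt{k}p/2)^p)$.
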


\begin{proof}
We show firstly that inequality $(\ref{3.6})$ holds.\\
Let $u,v\in\cd_p$.
From $(\ref{6.3})$ we deduce that 
$-\Gamma(u,v)\geq -\Gamma(u)^\frac{1}{2} \Gamma(v)^\frac{1}{2}$
and we get
$$
\Gamma(u)^\frac{p-2}{2} \Gamma(u,u-v)- \Gamma(v)^\frac{p-2}{2} \Gamma(v,u-v)=
\Gamma(u)^\frac{p}{2} + \Gamma(v)^\frac{p}{2}-
[\Gamma(u)^\frac{p-2}{2} + \Gamma(v)^\frac{p-2}{2}]\Gamma(u,v)\geq
$$
$$
\Gamma(u)^\frac{p}{2} + \Gamma(v)^\frac{p}{2} -
\Gamma(u)^\frac{p-1}{2} \Gamma(v)^\frac{1}{2} - 
\Gamma(v)^\frac{p-1}{2} \Gamma(u)^\frac{1}{2} =
[\Gamma(u)^\frac{p-1}{2} - \Gamma(v)^\frac{p-1}{2}] \cdot
 [\Gamma(u)^\frac{1}{2}   - \Gamma(v)^\frac{1}{2}  ].
$$
Since the function $t\longrightarrow t^{p-1}$ is strictly monotone on 
$(0,\infty)$ for
$p>1$, we obtain that  $(t^{p-1}-s^{p-1})(t-s)>0$ for
$s,t\in (0,\infty)$, $s\not= t$.
As a consequence we deduce $(\ref{3.6})$.

Integrating  $(\ref{3.6})$ with the measure $m$ we have
$\langle \mbox{\rm\sf L}_p u - \mbox{\rm\sf L}_p v, u-v \rangle \geq 0$, 
i.e.
$\mbox{\sf\rm L}_p$ is monotone.

If $ \langle \mbox{\rm\sf L}_p u - \mbox{\rm\sf L}_p v, u-v \rangle =0$ then 
from the above inequalities we conclude that 
$\Gamma(u)=\Gamma(v)$
and thus
$$
\int_X\Gamma(u)^\frac{p-2}{2}\Gamma(u-v)dm=0.
$$
Therefore $\Gamma(u)\cdot\Gamma(u-v)=0$.
From $(\ref{6.5})$ it results
$\Gamma(u-v)^\frac{1}{2}\leq\Gamma(u)^\frac{1}{2}+\Gamma(v)^\frac{1}{2}
=2\Gamma(u)^\frac{1}{2}
$
and finally
$\Gamma(u-v)=0$.

$b)$
Let us suppose now that the form  
$(\ce,\cd(\ce))$ is coercive
i.e.
$$
\int_X v^2dm\leq k
\int_X\Gamma(v)dm \quad 
\mbox{ for all } \, v\in\cd(\ce).
$$
Let  $u\in\cd_p\cap\mbox{\rm\sf L}^\infty$.
Proposition 6.2.2 in
\cite{BH2} implies $\vert u\vert^\frac{p}{2}\in \cd(\ce)$.
Applying the above inequality for $v=\vert u\vert^\frac{p}{2}$,
the chain rule and
the H\"{o}lder inequality, we get:
$$
\| u\|_p^p=
\int_X\vert u\vert^p dm=
\int_X(\vert u\vert^\frac{p}{2})^2 dm\leq k
\int_X\Gamma(\vert u\vert^\frac{p}{2})dm=
$$
$$
k(\frac{p}{2})^2
\int_X\vert u \vert^{p-2}\Gamma(\vert u\vert )dm
\leq k(\frac{p}{2})^2
(\int_X\vert u\vert^p dm )^\frac{p-2}{p}
( \int_X\Gamma(u)^\frac{p}{2} dm )^\frac{2}{p}=
k(\frac{p}{2})^2\| u\|_p^{p-2}\cdot\|\Gamma(u)^\frac{1}{2}\|_p^2.
$$
Therefore
$$
\| u\|_p^p\leq ( \frac{\sqrt{k}p}{2} )^p \langle \mbox{\sf L}_pu,u \rangle
$$
and thus
$$
\| u\|_{\cd_p}^p\leq (1+(\frac{\sqrt{k} p}{2})^p)
\langle \mbox{\sf L}_pu, u \rangle .
$$
If $u\in\cd_p$, then exists a sequence $(u_n)_n\subset\cd_p\cap L^\infty(X,m)$
such that $u_n\rightarrow u$ \ii n $\cd_p$ 
(c.f.  Proposition  \ref{prop3.1} $f)$).
Applying the last inequality for any function $u_n$ and
then passing to the limit, we conclude that  $\mbox{\sf L}_p$ is  
coercive.
\end{proof}

\begin{thm} \label{thm3.6} 
The following assertions hold.

$a)$
The unit contraction $C$-operates on $\mbox{\rm\sf L}_p$.

$b)$
The contraction $T_{-}$  $C$-operates on $\mbox{\rm\sf L}_p$, where $T_{-} u:= u^-$.

$c)$
Each $C^1$ normal contraction operates  on $\mbox{\rm\sf L}_p$.
\end{thm}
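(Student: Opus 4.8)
I will verify, in each of the three cases, the relevant condition of Section~\ref{sect2} by rewriting it through the identity $(\ref{3.3})$ as an inequality between integrals of energy densities, and then invoke one of two mechanisms: an orthogonality of energy measures for $(a)$ and $(b)$, and the convexity of $r\mapsto\|Du+rDv\|_H^p$ for $(c)$. All three cases first require $Tu\in\cd_p$: for the unit contraction this is Proposition~\ref{prop3.1}$(c)$; for $T_-$ it follows from Proposition~\ref{prop3.1}$(a),(b)$, since taking the negative part is a vector-lattice operation; and for a $C^1$ normal contraction $T$ it follows from $|Tu|\le|u|$, the chain rule $\Gamma(Tu)=T'(u)^2\Gamma(u)\le\Gamma(u)$, and the fact that normal contractions operate on the linear form $(\ce,\cd(\ce))$.

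For $(a)$ and $(b)$ I would first observe that the energy measures $\mu_{\langle Tu\rangle}$ and $\mu_{\langle u-Tu\rangle}$ are carried by disjoint Borel sets. For the unit contraction $Tu=u^{+}\wedge 1$ formula $(\ref{3.1})$ gives $\mu_{\langle Tu\rangle}=1_{[0<\tilde u<1]}\cdot\mu_{\langle u\rangle}$, and the same type of computation (using $(\ref{6.1})$--$(\ref{6.2})$ and strong locality, so that $\mu_{\langle u\rangle}$ charges no level set $\{\tilde u=c\}$) gives $\mu_{\langle u-Tu\rangle}=1_{[\tilde u<0]\cup[\tilde u>1]}\cdot\mu_{\langle u\rangle}$; for $T_-$ (with $u-T_-u=u^{+}$) one gets $\mu_{\langle T_-u\rangle}$ carried by $[\tilde u<0]$ and $\mu_{\langle u-T_-u\rangle}$ carried by $[\tilde u>0]$. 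From the Cauchy--Schwarz inequality $(\ref{6.3})$ this disjointness yields two facts: first $\Gamma(Tu,u-Tu)=0$, so $\Gamma(v+Tu,u-Tu)=\Gamma(v,u-Tu)$; and second, on $\{\Gamma(u-Tu)>0\}$ — the only set where $\Gamma(v,u-Tu)$ can be nonzero — one has $\Gamma(Tu)=0$ and hence $\Gamma(v,Tu)=0$, so $\Gamma(v+Tu)=\Gamma(v)$ there. Substituting into $(\ref{3.3})$,
$$
\langle\mbox{\sf L}_p(v+Tu)-\mbox{\sf L}_pv,\,u-Tu\rangle=\int_X\big[\Gamma(v+Tu)^{\frac{p-2}{2}}-\Gamma(v)^{\frac{p-2}{2}}\big]\,\Gamma(v,u-Tu)\,dm=0,
$$
because the integrand vanishes $m$-a.e.; in particular it is $\ge 0$, so both contractions $C$-operate on $\mbox{\sf L}_p$.

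For $(c)$ the two energy measures above need no longer be mutually singular, so I would use instead the gradient form $(\ref{p-gradient})$ of $\ce^p$ and the chain rule $D(Tu)=T'(u)\,Du$, which is unambiguous for $C^1$ contractions. Writing $a:=Du$, $b:=Dv$, $t:=T'(u)\in[-1,1]$ (so $1+t\ge 0$ and $1-t\ge 0$), a pointwise computation gives
$$
\|D(u+Tu+v)\|_H^{p-2}\big(D(u+Tu+v),D(u-Tu)\big)_H-\|Dv\|_H^{p-2}\big(Dv,D(u-Tu)\big)_H=(1-t)\,\tfrac1p\big[\psi'(1+t)-\psi'(0)\big],
$$
where $\psi(r):=\|ra+b\|_H^p$ and $\tfrac1p\psi'(r)=\|ra+b\|_H^{p-2}(a,\,ra+b)_H$. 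Since $p>1$, the function $\psi$ is convex (a power $\ge 1$ of a norm of an affine map), so $\psi'$ is non-decreasing; as $1+t\ge 0$ this gives $\psi'(1+t)\ge\psi'(0)$, and multiplying by $1-t\ge 0$ makes the integrand non-negative. Integrating over $X$ yields $\langle\mbox{\sf L}_p(u+Tu+v)-\mbox{\sf L}_pv,u-Tu\rangle\ge 0$, i.e.\ $T$ operates on $\mbox{\sf L}_p$.

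I expect the main obstacle to be the energy-measure bookkeeping underlying $(a)$ and $(b)$ — precisely the mutual singularity of $\mu_{\langle Tu\rangle}$ and $\mu_{\langle u-Tu\rangle}$, which rests on strong locality through the non-charging of level sets of $\tilde u$; once that is in place the two $\Gamma$-identities are immediate from $(\ref{6.3})$. In $(c)$ the delicate steps are the justification of $D(Tu)=T'(u)Du$ in $L^p(m;H)$ (here $|T'|\le 1$ keeps $D(Tu)$ in the right space) and the recognition of the bracket as $\psi'(1+t)-\psi'(0)$; after that the convexity of $r\mapsto\|ra+b\|_H^p$ closes the argument, and it is the $u$ inside $u+Tu+v$ that shifts the argument of $\psi'$ to the nonnegative value $1+t$, which is exactly what makes the sign come out right.
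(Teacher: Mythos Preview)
Your proof is correct. Parts $(a)$ and $(b)$ follow the same strategy as the paper --- showing the quantity is in fact equal to $0$ by localizing the energy --- though you phrase it via disjoint carriers of $\mu_{\langle Tu\rangle}$ and $\mu_{\langle u-Tu\rangle}$ together with Cauchy--Schwarz $(\ref{6.3})$, whereas the paper works directly with the indicator decomposition coming from $(\ref{6.2})$ and invokes $(\ref{7.2})$ on the set where $T\tilde u$ is constant; these are two ways of saying the same thing, and your appeal to ``$\mu_{\langle u\rangle}$ charges no level set'' is exactly what reconciles the strict versus non-strict inequalities in the indicators.

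For $(c)$ there is a genuine, if small, difference in packaging. The paper applies the pointwise monotonicity inequality $(\ref{3.6})$ to the pair $(u+Tu+v,\,v)$, factors out $1+T'(u)$, and then splits into the sets $\{1+T'(u)>0\}$ and $\{1+T'(u)=0\}$ before multiplying back by $1-T'(u)$. Your route through the gradient representation $(\ref{p-gradient})$ and the convexity of $r\mapsto\|ra+b\|_H^p$ is the same inequality in disguise --- monotonicity of $\xi\mapsto\|\xi\|_H^{p-2}\xi$ is precisely the convexity of $\|\cdot\|_H^p$ --- but your formulation has the advantage of avoiding the case split: the inequality $\psi'(1+t)\ge\psi'(0)$ holds uniformly for $t\in[-1,1]$, and the boundary case $t=-1$ is absorbed automatically.
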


\begin{proof}
$a)$ Let $u\in\cd_p$.
From Proposition 3.1 $c)$  we deduce that $T_1u=u^+\land 1\in\cd_p$
and  $(\ref{6.2})$ implies
$$
\Gamma(T_1u,w)=1_{[0< \tilde{u}< 1]}\cdot\Gamma(u,w)
\quad \mbox{ for all } \, w\in\cd(\ce).
$$
Taking into account this equality, we obtain for all
$u,v\in\cd_p$:\\[1mm]
$ \langle \mbox{\sf L}_p(v+T_1 u), u-T_1 u \rangle
-  \langle \mbox{\sf L}_pv, u-T_1 u \rangle =
\int_X[\Gamma(v+T_ 1u)^\frac{p-2}{2}\Gamma
(v+T_1u,u-T_1 u)-\Gamma(v)^\frac{p-2}{2}\cdot\Gamma(v, u-T_1 u)]dm=
\int_X(1-1_{[0< \tilde{u}< 1]})\cdot
[\Gamma(v+T_1 u)^\frac{p-2}{2}\cdot
\Gamma(v+T_1 u, u)-\Gamma(v)^\frac{p-2}{2}\cdot\Gamma(v, u)]dm=
\int_{[\tilde{u}\leq 0]\cup[\tilde{u}\geq 1]} 
[\Gamma(v+T_1 u)^\frac{p-2}{2}\cdot\Gamma(v+T_1 u, u)-
\Gamma(v)^\frac{p-2}{2}\cdot\Gamma(v, u)]dm =0$. 
The last equality holds since
$T_1 \tilde{u}=0$ on
$[\tilde{u}\leq 0]$
and
$T_1 \tilde{u}=1$
on
$[\tilde{u}\geq 1]$. 
Therefore, using also
 $(\ref{7.2})$ we get
$\Gamma(v+T_1 u)=\Gamma(v)$,
$\Gamma(v+T_1 u,u)=\Gamma(v,u)$, on
$[\tilde{u}\leq 0]\cup[\tilde{u}\geq 1]$,
$$
\Gamma(v+T_1 u)^\frac{p-2}{2}\cdot\Gamma(v+T_1 u, u)=
\Gamma(v)^\frac{p-2}{2}\cdot\Gamma(v,u)
$$
on the same set.
We conclude that the unit contraction $C$-operates. \\

\noindent
$b)$ Again from  Proposition  \ref{prop3.1}  b) and $(\ref{6.2})$ 
we deduce that
$T_{-}u\in\cd_p$ for all $u\in\cd_p$
and
$$
\Gamma(T_{-}u,w)=\Gamma(u\land 0,w)=
1_{[\tilde{u}< 0]}\cdot\Gamma(u,w) \quad
\mbox{ for all } \, w\in\cd(\ce).
$$
On the set $[\tilde{u}\geq 0]$ we have $T_{-} \tilde{u}=0$ and therefore
$$
\Gamma(v+T_{-}u,w)=\Gamma(v,w)\mbox{ on } [\tilde{u}\geq 0].
$$
As in the proof of assertion a) we have:
$$
\langle \mbox{\sf L}_p(v+T_{-}u), u-T_{-}u  \rangle
- \langle \mbox{\sf L}_pv, u-T_{-}u \rangle  =\!\!
\int_X \!\! \left(1-1_{[\tilde{u}< 0]}\right)
[\Gamma(v+T_{-}u)^\frac{p-2}{2} \Gamma(v+T_{-}u ,u) -
\Gamma(v)^\frac{p-2}{2} \Gamma(v,u)]dm
$$
$$
=
\int_{[\tilde{u}\geq 0]}
[\Gamma(v+T_{-}u)^\frac{p-2}{2} \Gamma(v+T_{-}u,u)-
\Gamma(v)^\frac{p-2}{2}\cdot\Gamma(v,u)]dm=0.
$$
\\

\noindent
$c)$ Let $T\in C^1$ be a normal contraction.
It results $\vert T' \vert\leq 1$ and from assertion $2)$ of Proposition \ref{prop6.1}
we have 
$$
\Gamma(Tu,w)=T'(u)\Gamma(u,w) \quad
\mbox{ for all } \, u,w\in\cd(\ce).
$$
We deduce that for  $u\in\cd_p$ we have
$$
\Gamma(Tu)=( T'(u) )^2\Gamma(u)\leq\Gamma(u)
$$
and thus
$Tu\in\cd_p$.

We apply now $(\ref{3.6})$ for $u+Tu+v$ and $v$:
$$
\Gamma(u+Tu+v)^\frac{p-2}{2}\cdot\Gamma(u+Tu+v,u+Tu)-
\Gamma(v)^\frac{p-2}{2}\cdot\Gamma(v,u+Tu)\geq 0
$$
or equivalently
$$
(1+T^{'}(u))[\Gamma(u+Tu+v)^\frac{p-2}{2} \Gamma
(u+Tu+v,u)-\Gamma(v)^\frac{p-2}{2} \Gamma(v,u)]\geq 0.
$$
As a consequence, on the set $M:=[1 + T'(u)>0]$ we have
$$
\Gamma(u+Tu+v)^\frac{p-2}{2}\cdot\Gamma(u+Tu+v, u)-
\Gamma(v)^\frac{p-2}{2} \Gamma(v,u)\geq 0.
$$
Multiplying with the positive function 
$1-T'(u)$ and taking into account the equality
$$
(1-T'(u))\Gamma(w,u)=\Gamma(w,u-Tu)
$$
we obtain on $M$
\begin{equation} \label{3.7}
\Gamma(u+Tu+v)^\frac{p-2}{2}\cdot\Gamma(u+Tu+v,u-Tu)-
\Gamma(v)^\frac{p-2}{2}\Gamma(v,u-Tu)\geq 0.
\end{equation}
On the set $X\setminus M$ we have  $1+T'(u)=0$ and therefore
on this set we have the equalities
$$
\Gamma(u+Tu+v)=\Gamma(v), \ 
\Gamma(u+Tu+v,u-Tu)=\Gamma(v,u-Tu)
$$
that lead to
$$
\Gamma(u+Tu+v)^\frac{p-2}{2}\cdot\Gamma(u+Tu+v,u-Tu)-
\Gamma(v)^\frac{p-2}{2}\cdot\Gamma(v,u-Tu)=
$$
$$
\Gamma(v)^\frac{p-2}{2}\cdot\Gamma(v,u-Tu)-
\Gamma(v)^\frac{p-2}{2}\cdot\Gamma(v,u-Tu)=0
$$
on $X\setminus M$.
We conclude that the inequality $(3.7)$ holds on  $X$ and
integrating with  $m$ it results
$$
\langle \mbox{\sf L}_p(u+Tu+v), u-Tu  \rangle
-  \langle \mbox{\sf L}_pv, u-Tu \rangle  =
$$
$$
\int_X[\Gamma(u+Tu+v)^\frac{p-2}{2}\cdot
\Gamma(u+Tu+v,u-Tu)-
\Gamma(v)^\frac{p-2}{2}\cdot\Gamma(v,u-Tu)]dm\geq 0,
$$
and thus the contraction $T$ operates on $\mbox{\sf\rm L}_p$.
\end{proof}

\noindent
{\bf Remark.}
{\it For each $\alpha>0$  the contraction $T_\alpha$ $C$-operates 
on $\mbox{\rm\sf L}_p$ where $T_\alpha(u):=u^+\land\alpha$.} \\
Indeed, the assertion follows from
$T_\alpha u=\alpha T_1\left(\frac{u}{\alpha}\right)$, 
since the contraction
$T_1$ $C$-operates (cf. Theorem  \ref{thm3.6} a)) and using the homogeneity of the 
operator
$\mbox{\sf\rm L}_p$ (see Proposition \ref{prop3.4}).

\begin{prop} \label{prop3.7}
The operator $\mbox{\sf\rm L}_p$ is hemi-continuous.
\end{prop}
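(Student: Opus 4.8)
The plan is to verify hemi-continuity straight from the definition: for arbitrary fixed $u,v,w\in\cd_p$ I must show that the real function $t\mapsto\langle\mbox{\sf L}_p(u+tv),w\rangle=\ce^p(u+tv,w)$ is continuous on $\R$ (note $u+tv\in\cd_p$ for every $t$, since $\cd_p$ is a vector space by Proposition \ref{prop3.1}). It is cleanest to work with the gradient representation $(\ref{p-gradient})$, which gives
$$
\ce^p(u+tv,w)=\int_X\|Du+tDv\|_H^{p-2}\,(Du+tDv,Dw)_H\,dm;
$$
because the energy measures are absolutely continuous with respect to $m$, the integrand is, for each $t$, a genuine $m$-measurable function on $X$.

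First I would check pointwise convergence of the integrand. The map $\Phi\colon H\to H$, $\Phi(x):=\|x\|_H^{p-2}x$, is continuous on all of $H$ (at the origin because $p>2$, where $\|\Phi(x)\|_H=\|x\|_H^{p-1}\to 0$). Hence, for $m$-a.e.\ $x\in X$,
$$
\|Du+tDv\|_H^{p-2}(Du+tDv,Dw)_H=(\Phi(Du+tDv),Dw)_H\longrightarrow(\Phi(Du+t_0Dv),Dw)_H\ \mbox{ as } t\to t_0 .
$$
Next I would produce a $t$-uniform integrable majorant on a bounded neighbourhood $\{|t|\le M\}$ of $t_0$. By the Cauchy--Schwarz inequality in $H$,
$$
\bigl|\|Du+tDv\|_H^{p-2}(Du+tDv,Dw)_H\bigr|\le\|Du+tDv\|_H^{p-1}\|Dw\|_H\le\bigl(\|Du\|_H+M\|Dv\|_H\bigr)^{p-1}\|Dw\|_H .
$$
The function $g:=(\|Du\|_H+M\|Dv\|_H)^{p-1}$ belongs to $L^q(X,m)$, $\frac1p+\frac1q=1$, since $\|Du\|_H+M\|Dv\|_H\in L^p(X,m)$ and $q(p-1)=p$, while $\|Dw\|_H\in L^p(X,m)$; by Hölder's inequality $g\cdot\|Dw\|_H\in L^1(X,m)$, and this bound does not depend on $t$. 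The dominated convergence theorem now yields $\lim_{t\to t_0}\ce^p(u+tv,w)=\ce^p(u+t_0v,w)$, which is exactly the asserted hemi-continuity.

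The only delicate point is the behaviour of the integrand on the set $\{Du+tDv=0\}$, and it is precisely the standing hypothesis $p>2$ of this section that makes $\Phi$ continuous there (extending it by $0$); everything else is routine measure theory. If one prefers to avoid the gradient, the same argument goes through verbatim with $\Gamma$ in place of $D$: one bounds $\Gamma(u+tv)^{\frac{p-2}{2}}\Gamma(u+tv,w)$ by $\Gamma(u+tv)^{\frac{p-1}{2}}\Gamma(w)^{\frac12}$ using $(\ref{6.3})$, then dominates $\Gamma(u+tv)^{\frac12}\le\Gamma(u)^{\frac12}+M\Gamma(v)^{\frac12}$ on $\{|t|\le M\}$ via $(\ref{6.4})$, and applies Hölder exactly as above.
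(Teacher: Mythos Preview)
Your argument is correct and follows essentially the same route as the paper: write the pairing as an integral, exhibit a $t$-uniform $L^1$ majorant for the integrand, and apply dominated convergence. The paper works directly with $\Gamma$ on the segment $t\in[0,1]$ and tests only against $u-v$ (which is all that is needed for the Browder property), whereas you prove the slightly stronger statement with an arbitrary test vector $w$ and all $t\in\R$; one small slip: in your final paragraph the triangle-type inequality $\Gamma(u+tv)^{1/2}\le\Gamma(u)^{1/2}+|t|\,\Gamma(v)^{1/2}$ is $(\ref{6.5})$, not $(\ref{6.4})$.
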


\begin{proof}
We have
$$
\langle \mbox{\sf L}_p(v+t(u-v)), u-v \rangle  =
\int_X\Gamma(tu+(1-t)v)^\frac{p-2}{2}
\Gamma(tu+(1-t)v, u-v) dm.
$$
If  $t\in [0,1]$ then:
$$
\Gamma (tu+(1-t)v)^\frac{p-2}{2}\cdot\Gamma(tu+(1-t)v,u-v)=
$$
$$
[t^2\Gamma(u)+2t(1-t)\Gamma(u,v)+
(1-t)^2\Gamma(v)]^\frac{p-2}{2} 
[t\Gamma(u,u-v)+(1-t)\Gamma(v,u-v)]\leq
$$
$$
[\Gamma(u)+2\Gamma(u)^\frac{1}{2}\Gamma(v)^\frac{1}{2}+\Gamma(v)]^\frac{p-2}{2}
\Gamma(u-v)^\frac{1}{2}
[\Gamma(u)^\frac{1}{2}+\Gamma(v)^\frac{1}{2}]\in
L^1(X,m).
$$
By dominated convergence we get the requested continuity.
\end{proof}

\begin{thm} \label{thm3.8}
Let  $(\ce,\cd(\ce))$ be regular strongly local Dirichlet form,
admitting a carr\'{e} du champ operator.
If $(\ce,\cd(\ce))$ is coercive 
then the associated nonlinear
$p$-form  is a nonlinear  Dirichlet form.
\end{thm}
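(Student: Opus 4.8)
The plan is to check, for the quadruple $(L^1(X,m),\,\cd_p,\,\cd_p,\,\mbox{\sf L}_p)$ (that is, with $\mbox{\bf K}=B=\cd_p$ and $A=\mbox{\sf L}_p$), all the requirements listed in Section \ref{sect2}, and then conditions $D1$ and $D2$. The Banach space and cone axioms are essentially already at hand: since $m$ is finite, $\cd_p$ embeds continuously into $L^p(X,m)$, and hence into $L^1(X,m)$, and by Proposition \ref{prop3.2} it is a reflexive real Banach space; as $\mbox{\bf K}=B=\cd_p$ it is a closed convex cone containing $0$, property $2$ is Proposition \ref{prop3.1}$\,b)$, and for property $3$ one notes that $v+\alpha\in\cd_p$ whenever $v\in\cd_p$ and $\alpha\in\R_+$ ($\alpha\in L^p(X,m)$ because $m$ is finite, and $\Gamma(v+\alpha)=\Gamma(v)$ by strong locality), so $u\land(v+\alpha)\in\cd_p$ again by Proposition \ref{prop3.1}$\,b)$. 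For $A=\mbox{\sf L}_p$ we have $\mbox{\sf L}_p0=0$ since $\Gamma(0)=0$; coercivity $\langle\mbox{\sf L}_pu,u\rangle\geq c\,\|u\|_{\cd_p}^p$ is Theorem \ref{thm3.5}$\,b)$, which is exactly where the coercivity of $(\ce,\cd(\ce))$ enters; and strict monotonicity follows from Theorem \ref{thm3.5}, since $\langle\mbox{\sf L}_pu-\mbox{\sf L}_pv,u-v\rangle=0$ forces $\Gamma(u-v)=0$ by part $a)$, and then coercivity gives $\|u-v\|_{\cd_p}^p\leq c\int_X\Gamma(u-v)^{p/2}dm=0$.

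The substantial step is the \textbf{Browder property}: for every $f\in\cd_p^{'}$ and every nonempty closed convex $W\subset\cd_p$ there is $u\in W$ with $\langle\mbox{\sf L}_pu-f,\,v-u\rangle\geq0$ for all $v\in W$. I would deduce it from the classical theory of monotone operators on reflexive Banach spaces (Browder's theorem on variational inequalities; see e.g.\ the monographs of Lions or Brezis): $\mbox{\sf L}_p$ is monotone (Theorem \ref{thm3.5}$\,a)$), hemicontinuous (Proposition \ref{prop3.7}), and bounded, with $\|\mbox{\sf L}_pv\|_{\cd_p^{'}}\leq\|v\|_{\cd_p}^{p-1}$ — this last estimate coming from the sector condition of Proposition \ref{prop3.4} together with $\langle\mbox{\sf L}_pw,w\rangle=\|\Gamma(w)^{1/2}\|_p^p\leq\|w\|_{\cd_p}^p$. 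Combining this bound with coercivity and $p>2$ yields $\langle\mbox{\sf L}_pv,\,v-v_0\rangle/\|v\|_{\cd_p}\to+\infty$ as $\|v\|_{\cd_p}\to\infty$, for any fixed $v_0\in W$, which is the coercivity hypothesis required by that theorem. I expect this to be the main obstacle: identifying the precise monotone-operator statement covering variational inequalities on an arbitrary closed convex subset, and verifying that the homogeneous estimate $\langle\mbox{\sf L}_pu,u\rangle\geq c\|u\|_{\cd_p}^p$ provides the coercivity it needs.

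Finally, $D1$ and $D2$. Let $u,v\in\cd_p$ be pure potentials and $\alpha>0$. Writing $u\land v=v+(u-v)\land0$ and applying the fact that $T_{-}$ $C$-operates on $\mbox{\sf L}_p$ (Theorem \ref{thm3.6}$\,b)$) to the pair $(u-v,\,v)$ gives $\langle\mbox{\sf L}_p(u\land v)-\mbox{\sf L}_pv,\,(u-v)^+\rangle\geq0$; since $(u-v)^+\in\cd_p$ (Proposition \ref{prop3.1}$\,b)$) is nonnegative and $v$ is a pure potential, $\langle\mbox{\sf L}_pv,(u-v)^+\rangle\geq0$, and because $u-u\land v=(u-v)^+$ this is precisely $D1$. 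For $D2$ write $u\land(v+\alpha)=(v+\alpha)+(u-v-\alpha)\land0$ and repeat the argument with the pair $(u-v-\alpha,\,v+\alpha)$; the term $\langle\mbox{\sf L}_p(v+\alpha),\,(u-v-\alpha)^+\rangle$ that then appears equals $\langle\mbox{\sf L}_pv,\,(u-v-\alpha)^+\rangle$ because $\Gamma(v+\alpha,\cdot)=\Gamma(v,\cdot)$ and $\Gamma(v+\alpha)=\Gamma(v)$ by strong locality, hence is again $\geq0$ since $v$ is a pure potential. Collecting all of the above shows that $\mbox{\sf L}_p$ gives a nonlinear Dirichlet form in the sense of Section \ref{sect2}.
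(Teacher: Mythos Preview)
Your overall strategy matches the paper's: reflexivity of $\cd_p$, hemicontinuity and monotonicity of $\mbox{\sf L}_p$ give the Browder property (the paper cites the same remark in \cite{Beu}), coercivity and strict monotonicity come from Theorem~\ref{thm3.5}, and $D1$, $D2$ are deduced from the fact that suitable contractions $C$-operate. Your direct derivation of $D1$ from ``$T_{-}$ $C$-operates'' is exactly what the paper obtains by citing Lemma~2.3.3 of \cite{Beu}.

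There is, however, a genuine slip. You assert that $v+\alpha\in\cd_p$ for $\alpha>0$ (used both for cone axiom~3 and in your $D2$ argument). Under the standing hypotheses this is \emph{false}: coercivity $\|u\|_2^2\le k\,\ce(u,u)$ together with strong locality ($\Gamma(1)=0$) forces $1\notin\cd(\ce)$, hence nonzero constants are not in $\cd_p$. Consequently you cannot apply Theorem~\ref{thm3.6}$\,b)$ to the pair $(u-v-\alpha,\,v+\alpha)$ as stated, since neither entry lies in $\cd_p$. The paper avoids this by using that the contractions $T_\alpha(w)=w^+\wedge\alpha$ $C$-operate (Remark after Theorem~\ref{thm3.6}) and then invoking Lemma~2.3.6 of \cite{Beu} for $D2$. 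Your route can be repaired: for cone axiom~3 write $u\wedge(v+\alpha)=u-(u-v-\alpha)^+$ and note $t\mapsto(t-\alpha)^+$ is a normal contraction; for $D2$ either switch to the $T_\alpha$ argument, or observe that the pointwise $\Gamma$-computation proving Theorem~\ref{thm3.6}$\,b)$ only uses that $v+T_{-}u$ and $u-T_{-}u$ lie in $\cd_p$ and that $\Gamma(v+\alpha,\cdot)=\Gamma(v,\cdot)$, all of which hold here.
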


\begin{proof}
By assertion $2)$ of Proposition \ref{prop3.2} we get that $\cd_p$ is reflexive.
Proposition \ref{prop3.7}  and Remark 3 at page 14 in \cite{Beu} 
imply now   that the operator 
$\mbox{\sf L}_p: \cd_p\longrightarrow \cd_p'$
satisfies the Browder property. 
We have shown (in Theorem \ref{thm3.6} b)) 
that the contraction 
$T_{-}$ $C$-operates on $\mbox{\sf L}_p$.
By Lemma 2.3.3  from \cite{Beu} we deduce that property $D1$ is verified. 
Since the contractions $T_\alpha$, $\alpha\in\R_+$,  $C$-operate, 
applying Lemma $2.3.6$ from \cite{Beu} we deduce that property $D2$ also holds.
The strict monotonicity and coercivity of the operator $L_p$ 
derive from Theorem \ref{thm3.5}. 
\end{proof}

\begin{cor} \label{cor3.9}
The following assertions hold.

$a)$ The form associated with the $p$-Laplace operator is  
a nonlinear Dirichlet form on  $W_0^{1,p}(\Omega)$.

$b)$ The nonlinear $p$-form associated   with
the Dirichlet  form given by    the uniformly elliptic case is  a
nonlinear  Dirichlet form on $W_0^{1,p}(\Omega)$.
\end{cor}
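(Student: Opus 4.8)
The plan is to obtain both assertions as applications of Theorem~\ref{thm3.8}; I would prove $b)$ first, $a)$ being the special case of the identity coefficient matrix up to a harmless positive constant. For $b)$, let $a=(a_{ij})$ be symmetric, bounded and measurable on the bounded domain $\Omega$ with $\lambda I\le a(x)\le\Lambda I$ for some $0<\lambda\le\Lambda$, and put $\ce(u,v):=\int_\Omega\langle a\nabla u,\nabla v\rangle\,dx$ with $\cd(\ce)=W_0^{1,2}(\Omega)$. This is a symmetric, regular, strongly local Dirichlet form on $L^2(\Omega,dx)$ (the reference measure being finite since $\Omega$ is bounded), and it admits the carr\'e du champ operator $\Gamma(u,v)=2\langle a\nabla u,\nabla v\rangle$. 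By the Poincar\'e inequality on $\Omega$ there is a constant $C$ with $\|u\|_2^2\le C\int_\Omega|\nabla u|^2\,dx\le(C/\lambda)\,\ce(u,u)$, so $\ce$ is coercive; hence Theorem~\ref{thm3.8} applies and $\ce^p$ is a nonlinear Dirichlet form on $\cd_p$.

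It remains to identify the underlying space. The bounds $\lambda I\le a\le\Lambda I$ give $2\lambda|\nabla u|^2\le\Gamma(u)\le2\Lambda|\nabla u|^2$, so $\Gamma(u)^{1/2}\in L^p(\Omega)$ if and only if $|\nabla u|\in L^p(\Omega)$, and $\|\cdot\|_{\cd_p}$ is equivalent on $\cd_p$ to the $W^{1,p}$-norm; in particular $\cd_p\subset W^{1,p}(\Omega)$. Since $C_c^\infty(\Omega)\subset\cd_p$ and $(\cd_p,\|\cdot\|_{\cd_p})$ is complete (Proposition~\ref{prop3.2}), the closure of $C_c^\infty(\Omega)$ inside $\cd_p$ is exactly $W_0^{1,p}(\Omega)$, which is therefore a closed, reflexive subspace of $\cd_p$ and a sublattice stable under normal contractions. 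If $\partial\Omega$ is regular enough one in fact has $\cd_p=W_0^{1,p}(\Omega)$, but in any case the proof of Theorem~\ref{thm3.8} applies verbatim with $\mbox{\bf K}=B=W_0^{1,p}(\Omega)$ in place of $\cd_p$: the restriction of $\mbox{\sf L}_p$ to $W_0^{1,p}(\Omega)$ is hemi-continuous, coercive and strictly monotone, the contractions $T_-$ and $T_\alpha$ $C$-operate on it, and the Browder property follows — exactly as in the proof of Theorem~\ref{thm3.8}, via reflexivity, hemi-continuity and coercivity (one may also extend any $f\in(W_0^{1,p}(\Omega))'$ by Hahn--Banach to $\cd_p'$ and use that a convex closed subset of $W_0^{1,p}(\Omega)$ is convex and closed in $\cd_p$). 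Thus $\ce^p$ is a nonlinear Dirichlet form on $W_0^{1,p}(\Omega)$, which proves $b)$.

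For $a)$ one takes $a=I$, i.e.\ $\ce=\D$ with carr\'e du champ $2\nabla u\cdot\nabla v$; the discussion preceding Theorem~\ref{thm3.5} identifies $\D^p$ with the $p$-Laplace form $\langle\Delta_p\cdot,\cdot\rangle$ of $(\ref{3.5})$ up to a positive multiplicative constant (namely $2^{p/2}$ with this normalisation of $\Gamma$). Since multiplying a monotone form by a positive constant preserves $A0=0$, coercivity, strict monotonicity, the Browder property and the conditions $D1$, $D2$, part $b)$ applied to $a=I$ yields that $\langle\Delta_p\cdot,\cdot\rangle$ is a nonlinear Dirichlet form on $W_0^{1,p}(\Omega)$, recovering Theorem~5.2.7 of \cite{Beu}. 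The only step requiring genuine care is the identification of $\cd_p$ with the closed sublattice $W_0^{1,p}(\Omega)$ and the verification that the Browder property descends under this restriction; everything else is a direct specialisation of the results already established in this section.
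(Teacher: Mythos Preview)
Your proof is correct and follows essentially the same route as the paper: verify the hypotheses of Theorem~\ref{thm3.8} for the uniformly elliptic form (regularity, strong locality, carr\'e du champ, coercivity via Poincar\'e), apply it on $\cd_p$, and then restrict to the closed reflexive sublattice $W_0^{1,p}(\Omega)\subset\cd_p$, checking that strict monotonicity, coercivity, the Browder property and the contraction conditions descend. Your treatment of the restriction step is in fact more careful than the paper's (which only sketches why $Tu\in W_0^{1,p}(\Omega)$ and leaves the Browder property implicit), and your remark about the constant $2^{p/2}$ relating $\D^p$ to the $p$-Laplace form is a detail the paper omits.
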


\begin{proof}
It suffices to prove assertion $b)$.
Since $\cd_p : =(W_0^{1,2}(\Omega))_p$ is a closed subspace of $W^{1,p}(\Omega)$ 
that is reflexive, 
it follows that $\cd_p$ is also reflexive.
The coercivity of the linear form is a consequence of the Poincar\'e 
inequality for functions from $W_0^{1,2}(\Omega)$ 
(see e.g. \cite{HKM}), using the uniform ellipticity.
By Theorem \ref{thm3.8} we deduce that the associated 
$p$-form is a nonlinear Dirichlet form on $\cd_p$. 
The restriction of $\mbox{\sf L}_p$ to $W_0^{1,p}(\Omega)\subset\cd_p$ is 
therefore strictly monotone and coercive. 
If a contraction $T$ $C$-operates or operates on 
$(\Delta_p,\cd_p)$,  then the same property is 
transmitted to the restriction of 
$\Delta_p$ to $W_0^{1,p}(\Omega)$, since
$Tu\in\cd_p$ whenever $u\in W_0^{1,p}(\Omega)$ and from 
$|Tu|\leq u$ we deduce that $Tu\in W_0^{1,p}(\Omega)$. 
We conclude that we have obtained a nonlinear Dirichlet 
form on $W_0^{1,p}(\Omega)$.
\end{proof}

\noindent
{\bf  Capacity and equilibrium potential.} 
Let  $(\ce,\cd(\ce))$ be a regular strongly local Dirichlet form,
admitting a carr\'{e} du champ operator $\Gamma$ and such that 
$(\ce,\cd(\ce))$ is coercive.

We follow the approach from \cite{HKM}, Theorem 2.2,  to construct the Choquet capacity induced by the nonlinear $p$-form  $(\ce^p, \cd_p)$ associated with $(\ce,\cd(\ce))$.

For a compact subset $K$ of $X$ let $W(K):= \{ w\in \cd_p\cap C(X) : w\geqslant 1 \mbox{ on }  K \} $ 
and define
 $$
 {\sf cap}^o_p (K):= \inf \{ \langle L_p u,  u \rangle : u\in W(K) \}\, 
 \mbox{ provided that }W(K)\not= \emptyset
 $$ 
  and
${\sf cap}^o_p (K):=\infty$ if  $W(K)= \emptyset$.   
Recall that we have 
  $ \langle L_p u,  u \rangle  =  \int_X  \Gamma (u)^{ \frac{p}{2} }dm$.

If $U\subset X$ is open,  set
$$
{\sf cap}_p (U):=\sup\{ {\sf cap}^o_p (K): K\subset U \, \mbox{ compact} \}
$$
and finally, for an arbitrary set $E \subset X$ 
$$
{\sf cap}_p (E):=\inf\{ {\sf cap}_p (U):  U \mbox{ open}, E\subset  U \}.
$$

We  show now that there is no ambiguity for the  definition of the capacity of a compact set and we prove the existence of the equilibrium potential for a compact set.

\begin{prop} \label{prop3.10} 
The following assertions hold for a compact subset $K$ of  $X$.

$1)$ We have ${\sf cap}^o_p (K)= {\sf cap}_p (K)$.

$2)$ There exists a unique function $e_K \in W(K)$ such that
$$
 \langle L_p e_K , w- e_K \rangle \geq 0 \mbox{ for all } w\in W(K) 
 $$
 and we have $0\leq e_U\leq 1$, $e_K=1$  on $K$, and 
$$
{\sf cap}_p(K)= \langle L_p e_K , e_K \rangle =\int_X \Gamma(e_K)^\frac{p}{2} dm. 
$$
\end{prop}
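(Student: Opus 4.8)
The plan is to follow the variational strategy of \cite{HKM}, Theorem 2.2, adapted to the operator $\mathsf{L}_p$. For part $2)$, I would first observe that $W(K)$ is a nonempty (when $\mathsf{cap}^o_p(K)<\infty$; otherwise both sides are $\infty$ and there is nothing to prove), convex, and closed subset of the reflexive Banach space $\cd_p$: convexity is immediate from the definition, and closedness follows because $\cd_p$-convergence implies (a subsequence) convergence $m$-a.e., and quasi-continuous representatives converge quasi-everywhere, so the constraint $w\geq 1$ on $K$ is preserved in the limit (here one uses that elements of $W(K)$ are continuous, so the constraint is genuinely pointwise on the compact $K$). Then the Browder property established in the proof of Theorem \ref{thm3.8} — valid because $\mathsf{L}_p$ is hemicontinuous (Proposition \ref{prop3.7}), monotone and coercive (Theorem \ref{thm3.5}) on the reflexive space $\cd_p$ — applied with $f=0$ and $W=W(K)$ yields the existence of $e_K\in W(K)$ with
$$
\langle \mathsf{L}_p e_K,\ w-e_K\rangle \geq 0\quad\text{for all } w\in W(K).
$$
Uniqueness then comes from strict monotonicity: if $e_K'$ were another solution, test the inequality for $e_K$ against $w=e_K'$ and the one for $e_K'$ against $w=e_K$, add, and obtain $\langle \mathsf{L}_p e_K-\mathsf{L}_p e_K',\ e_K-e_K'\rangle\leq 0$, forcing $\Gamma(e_K-e_K')=0$ by Theorem \ref{thm3.5}$a)$; combined with coercivity (Theorem \ref{thm3.5}$b)$) this gives $\|e_K-e_K'\|_{\cd_p}=0$.

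Next I would establish the pointwise bounds $0\leq e_K\leq 1$ and $e_K=1$ on $K$. For the upper truncation: $e_K^+\wedge 1\in W(K)$ by Proposition \ref{prop3.1}$c)$ together with $e_K\geq 1$ on $K$; since the unit contraction $C$-operates on $\mathsf{L}_p$ (Theorem \ref{thm3.6}$a)$), plug $u=e_K$, $v=e_K^+\wedge 1 - e_K$ appropriately — more cleanly, compare $e_K$ with $w:=e_K\wedge 1\in W(K)$ in the variational inequality and use the $C$-operating property of the truncations $T_\alpha$ (the Remark after Theorem \ref{thm3.6}) to conclude $\langle\mathsf{L}_p e_K,\ e_K\wedge 1 - e_K\rangle\geq 0$ while the variational inequality gives the reverse, so $\Gamma(e_K - e_K\wedge 1)=0$ on $\{\tilde e_K>1\}$; a Poincaré-type argument (coercivity) then forces $e_K\leq 1$. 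Similarly, using that $T_-$ $C$-operates (Theorem \ref{thm3.6}$b)$) and that $e_K\vee 0\in W(K)$, one gets $e_K\geq 0$. Finally $e_K=1$ on $K$: this is the standard fact that the equilibrium potential saturates its obstacle on $K$ — since $e_K\wedge 1\in W(K)$ equals $e_K$ except possibly where $e_K$ is already $\geq 1$, and on a neighborhood argument near $K$ one shows any "excess" could be trimmed to lower the energy, contradicting minimality; here I would follow the $\cite{HKM}$ argument verbatim, replacing $\int|\nabla u|^p$ by $\int\Gamma(u)^{p/2}dm$.

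For part $1)$, the identity $\mathsf{cap}^o_p(K)=\mathsf{cap}_p(K)$: the inequality $\mathsf{cap}_p(K)\leq\mathsf{cap}^o_p(K)$ follows since for every open $U\supset K$ and every compact $K'\subset U$ we can... actually the nontrivial direction is $\mathsf{cap}^o_p(K)\leq\mathsf{cap}_p(K)$. The key point is that $e_K\in W(K)\cap C(X)$ with $e_K\geq 1$ on $K$; by continuity, for any $\varepsilon>0$ the open set $U_\varepsilon:=\{e_K>1-\varepsilon\}\supset K$, and $(1-\varepsilon)^{-1}e_K\in W(K')$ for every compact $K'\subset U_\varepsilon$, whence $\mathsf{cap}^o_p(K')\leq (1-\varepsilon)^{-p}\langle\mathsf{L}_p e_K,e_K\rangle = (1-\varepsilon)^{-p}\mathsf{cap}^o_p(K)$ using homogeneity of degree $p-1$ of $\mathsf{L}_p$ (Proposition \ref{prop3.4}). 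Taking the sup over $K'\subset U_\varepsilon$ gives $\mathsf{cap}_p(U_\varepsilon)\leq(1-\varepsilon)^{-p}\mathsf{cap}^o_p(K)$, hence $\mathsf{cap}_p(K)\leq(1-\varepsilon)^{-p}\mathsf{cap}^o_p(K)$ for all $\varepsilon$, and letting $\varepsilon\downarrow 0$ finishes it; the reverse inequality $\mathsf{cap}^o_p(K)\leq\mathsf{cap}_p(K)$ holds trivially from the definitions since $K$ is among the compact subsets of any open $U\supset K$. The final energy identity $\mathsf{cap}_p(K)=\langle\mathsf{L}_p e_K,e_K\rangle=\int_X\Gamma(e_K)^{p/2}dm$ is then just the definition of $\mathsf{cap}^o_p(K)$ together with the fact that the infimum is attained at $e_K$: any $w\in W(K)$ has $\langle\mathsf{L}_p w,w\rangle\geq\langle\mathsf{L}_p e_K,e_K\rangle$, which follows from the variational inequality and monotonicity (expand $\langle\mathsf{L}_p w - \mathsf{L}_p e_K,\ w-e_K\rangle\geq 0$ and add $\langle\mathsf{L}_p e_K,\ w-e_K\rangle\geq 0$ to get $\langle\mathsf{L}_p w,\ w-e_K\rangle\geq 0$, then... use the sector/homogeneity structure, exactly as in the linear case).

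The main obstacle I anticipate is the saturation statement $e_K=1$ on $K$ and the closedness of $W(K)$ in $\cd_p$ — both require care with quasi-continuous versus continuous representatives and with the interaction between $\cd_p$-convergence and quasi-everywhere convergence; the energy-minimality/trimming argument for $e_K=1$ on $K$ is where the proof in \cite{HKM} is least mechanical to transcribe, since one must verify that lowering the obstacle-exceeding part strictly decreases $\int\Gamma(\cdot)^{p/2}dm$ using the strict monotonicity from Theorem \ref{thm3.5}$a)$ rather than pointwise convexity of $|\nabla\cdot|^p$.
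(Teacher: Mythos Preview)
Your plan follows the paper's route closely, with two deviations worth flagging.

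First, for part $1)$ the paper does \emph{not} invoke the equilibrium potential at all: it takes an arbitrary $u\in W(K)$, forms the open set $U_\varepsilon:=\{u>1-\varepsilon\}\supset K$, chooses a compact $L$ with $K\subset L\subset U_\varepsilon$ approximating ${\sf cap}_p(U_\varepsilon)$ from below, and uses $(1-\varepsilon)^{-1}u\in W(L)$ together with homogeneity to get ${\sf cap}_p(U_\varepsilon)\leq(1-\varepsilon)^{-p}\int_X\Gamma(u)^{p/2}\,dm+\varepsilon$; taking the infimum over $u\in W(K)$ afterwards yields ${\sf cap}_p(K)\leq(1-\varepsilon)^{-p}{\sf cap}^o_p(K)+\varepsilon$. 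This makes part $1)$ logically independent of part $2)$, whereas your argument feeds ${\sf cap}^o_p(K)=\langle L_p e_K,e_K\rangle$ back into part $1)$. (Incidentally, your running commentary swaps the two directions: as you say correctly at the very end, ${\sf cap}^o_p(K)\leq{\sf cap}_p(K)$ is the trivial one, and the computation you carry out establishes the converse.)

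Second, for the energy identity ${\sf cap}^o_p(K)=\langle L_p e_K,e_K\rangle$, your monotonicity-only sketch does not close: adding $\langle L_p w-L_p e_K,\,w-e_K\rangle\geq 0$ to $\langle L_p e_K,\,w-e_K\rangle\geq 0$ yields $\langle L_p w,w\rangle\geq\langle L_p w,e_K\rangle$, not $\geq\langle L_p e_K,e_K\rangle$. The paper instead uses the sector condition (Proposition \ref{prop3.3}$\,b)$, following \cite{Beu}, Lemma 6.2.1): from the variational inequality $\langle L_p e_K,e_K\rangle\leq\langle L_p e_K,w\rangle$ and the sector bound $\langle L_p e_K,w\rangle\leq\langle L_p e_K,e_K\rangle^{(p-1)/p}\langle L_p w,w\rangle^{1/p}$ one gets minimality directly. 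For the bounds $0\leq e_K\leq 1$ and $e_K=1$ on $K$ the paper simply cites \cite{Beu}, Theorem 6.2.1, whose content is precisely your $C$-operating contraction argument.
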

\begin{proof}
$1)$ The inequality ${\sf cap}^o_p (K)\leq  {\sf cap}_p (K)$ is clear. 
To prove the converse inequality let $\varepsilon >0$, $u\in W(K)$, and consider the open set 
$U_\varepsilon:= \{ x\in X: u > 1-\varepsilon \}$. 
Then $K\subset U_\varepsilon$ and let $L$ be a compact subset of $U_\varepsilon$, such that $K\subset L$ and
${\sf cap}_p (U_\varepsilon)\leq {\sf cap}^o_p (L)+\varepsilon$. 
We have $\frac{1}{1-\varepsilon} u \in W(L)$ and therefore ${\sf cap}^o_p (L)
\leq \int_X \Gamma ( \frac{1}{1-\varepsilon} u)^{ \frac{p}{2} }dm$.
We conclude that 
${\sf cap}_p (U_\varepsilon) \leq    (1- \varepsilon)^{-p}  \int_X \Gamma (  u)^{ \frac{p}{2} }dm +\varepsilon$.
Taking the infimum over  all $u\in W(K)$ we get
${\sf cap}_p (K) \leq {\sf cap}_p (U_\varepsilon)  \leq (1- \varepsilon)^{-p}  {\sf cap}^o_p (K) +\varepsilon$
and letting $\varepsilon \longrightarrow 0$ we obtain the claimed inequality.

$2)$  The existence of the function $e_K$ follows by $(\ref{2})$ because  assertion $b)$ of Theorem \ref{thm3.5} implies that the operator $L_p$ is strictly monotone.
The properties $0\leq e_K\leq 1$ and $e_K=1$  on $K$ follow as in the proof of Theorem 6.2.1 from \cite{Beu}.
Arguing as in the proof of Lemma 6.2.1  from \cite{Beu} and using the sector condition (assertion $b)$ of Proposition $\ref{prop3.3}$), we obtain
the equality ${\sf cap}_p(K)= \langle L_p e_K , e_K\rangle$.
\end{proof}

We show now that ${\sf cap}_p$ is a Choquet capacity on $X$. 
The next theorem is a generalisation, in our frame given by a regular strongly local Dirichlet form, of a result for the $p$-Laplace operator, Theorem 2.2 from \cite{HKM}.

\begin{thm} \label{thm3.11}
The set function $E\longmapsto {\sf cap}_p (E)$, $E\subset X$, enjoys the following properties.

$1)$  ${\sf cap}_p$ is strongly subadditive on compacts sets, that is, if if $K, L$ are compact subsets of $X$ then
$$
{\sf cap}_p (K\cup L) + {\sf cap}_p(K\cap L)\leq {\sf cap}_p(K) + {\sf cap}_p (L).
$$

$2)$ The set function $E\longmapsto {\sf cap}_p (E)$ is a Choquet capacity on $X$, that is, the following properties hold:

\noindent
$(2a)$ If $E_1\subset E_2$ then ${\sf cap}_p(E_1)\leq {\sf cap}_p (E_2)$.

\noindent
$(2b)$ If $(K_i)_i$ is a decreasing sequence of compact subsets of $X$ with $K=\bigcap_i K_i$ then ${\sf cap}_p(K)=\lim_i {\sf cap}_p(K_i)$.

\noindent
$(2c)$  If $(E_i)_i$ is an  increasing sequence of subsets of $X$ with $E=\bigcup_i E_i$ then ${\sf cap}_p(E)=\lim_i {\sf cap}_p(E_i)$.

\vspace{1mm}
$3)$ If  $E=\bigcup_i E_i$ then ${\sf cap}_p(E) \leq \sum_i  {\sf cap}_p(E_i)$.

$4)$ If $E\subset X$ and   ${\sf cap}_p(E) = 0$ then the set $E$ is $m$-negligible. 
\end{thm}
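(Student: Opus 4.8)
The plan is to follow the classical scheme for $p$-capacities from \cite{HKM}, Theorem 2.2, adapting each step to the abstract setting of the nonlinear $p$-form $(\ce^p,\cd_p)$. The crucial structural facts that replace the Euclidean computations are: the variational characterisation of ${\sf cap}^o_p(K)$ via the equilibrium potential $e_K$ (Proposition \ref{prop3.10}), the monotonicity inequality $(\ref{3.6})$ and strict monotonicity of $\mbox{\sf L}_p$ (Theorem \ref{thm3.5}), the lattice stability of $\cd_p$ and the truncation estimates $\Gamma(u\wedge v)\leq\Gamma(u)+\Gamma(v)$, $\Gamma(u^+\wedge 1)\leq\Gamma(u)$ (Proposition \ref{prop3.1}), and the identities $\mu_{\langle u\vee v\rangle}+\mu_{\langle u\wedge v\rangle}=\mu_{\langle u\rangle}+\mu_{\langle v\rangle}$ (from the strong locality, via $(\ref{6.1})$). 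First I would prove $1)$: given compact $K,L$, take (after an $\varepsilon$-approximation) near-optimal $u\in W(K)$, $v\in W(L)$; then $u\vee v\in W(K\cup L)$ and $u\wedge v\in W(K\cap L)$ by the lattice property, and using $\Gamma(u\vee v)^{p/2}+\Gamma(u\wedge v)^{p/2}\leq \Gamma(u)^{p/2}+\Gamma(v)^{p/2}$ — which follows pointwise $m$-a.e. from $\Gamma(u\vee v)+\Gamma(u\wedge v)=\Gamma(u)+\Gamma(v)$ together with the elementary inequality $a^{p/2}+b^{p/2}\le c^{p/2}+d^{p/2}$ when $a+b=c+d$ and $\{a,b\}$ "interlaces" $\{c,d\}$ — we integrate and pass to the infimum.

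Next, $(2a)$ is immediate from the definitions (monotonicity of $\inf$/$\sup$ over nested families). For $(2b)$: by $(2a)$, $\lim_i{\sf cap}_p(K_i)\geq{\sf cap}_p(K)$; for the reverse, given an open $U\supset K$, compactness of the $K_i$ forces $K_i\subset U$ for $i$ large, whence ${\sf cap}_p(K_i)={\sf cap}^o_p(K_i)\leq{\sf cap}_p(U)$ by Proposition \ref{prop3.10}$\,1)$, and taking the infimum over $U\supset K$ gives $\lim_i{\sf cap}_p(K_i)\leq{\sf cap}_p(K)$. Part $3)$ (countable subadditivity): first prove finite subadditivity on compacts from $1)$ (drop the $\cap$ term), then extend to open sets by the definition of ${\sf cap}_p(U)$ as a sup over compacts exhausting $U\cup V$, then to arbitrary sets via outer regularity; the standard $\varepsilon/2^i$ argument with an enlarging open cover $U_i\supset E_i$ yields $E\subset\bigcup U_i$ and a monotone exhaustion argument gives ${\sf cap}_p(\bigcup U_i)\leq\sum{\sf cap}_p(U_i)$.

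The main obstacle is $(2c)$, continuity along increasing sequences of arbitrary sets. By outer regularity it reduces to open sets $U_i\uparrow U$, hence, using that ${\sf cap}_p(U)=\sup\{{\sf cap}^o_p(K):K\subset U\text{ compact}\}$, to showing: for every compact $K\subset U=\bigcup_i U_i$ one has ${\sf cap}^o_p(K)\leq\lim_i{\sf cap}_p(U_i)$. One cannot simply fit $K$ inside a single $U_i$. The resolution, as in \cite{HKM}, is to use the equilibrium potentials $u_i:=e_{K_i}$ for suitable compact $K_i\subset U_i$ with $K\subset\bigcup_i \mathrm{int}(K_i)$, show $(\langle\mbox{\sf L}_p u_i,u_i\rangle)_i=({\sf cap}_p(K_i))_i$ is bounded, so by reflexivity of $\cd_p$ (Proposition \ref{prop3.2}$\,2)$) a subsequence converges weakly in $\cd_p$ to some $u$; the coercivity of $\mbox{\sf L}_p$ (Theorem \ref{thm3.5}$\,b)$) and the convexity/weak lower semicontinuity of $u\mapsto\langle\mbox{\sf L}_p u,u\rangle=\|\Gamma(u)^{1/2}\|_p^p$ give $\langle\mbox{\sf L}_p u,u\rangle\leq\liminf_i{\sf cap}_p(K_i)\leq\lim_i{\sf cap}_p(U_i)$; finally, using Mazur's lemma to pass to strong (hence, along a further subsequence, quasi-everywhere) convergence of convex combinations, one checks $u\geq1$ quasi-everywhere on $K$, so that an arbitrarily small modification of $u$ lies in $W(K)$ and hence ${\sf cap}^o_p(K)\leq\langle\mbox{\sf L}_p u,u\rangle+\varepsilon$. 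Part $4)$ is then short: if ${\sf cap}_p(E)=0$, choose open $U_i\supset E$ with ${\sf cap}_p(U_i)\to0$; the corresponding equilibrium potentials $e_{K_i}$ of compacts $K_i\subset U_i$ have $\int_X\Gamma(e_{K_i})^{p/2}dm\to0$, and $\int_X e_{K_i}^p\,dm$ is controlled by coercivity (Theorem \ref{thm3.5}$\,b)$), so $e_{K_i}\to0$ in $\cd_p\subset L^p(X,m)$; since $e_{K_i}\geq1$ on $K_i\supset E\cap(\text{compact exhaustion})$, a subsequence converges $m$-a.e., forcing $m(E)=0$.
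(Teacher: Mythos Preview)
Your plan for $1)$, $(2a)$, and $(2b)$ is essentially the paper's, with one simplification you miss: by the truncation formula, pointwise $\{\Gamma(u\vee v),\Gamma(u\wedge v)\}=\{\Gamma(u),\Gamma(v)\}$ as multisets, so $\Gamma(u\vee v)^{p/2}+\Gamma(u\wedge v)^{p/2}=\Gamma(u)^{p/2}+\Gamma(v)^{p/2}$ with \emph{equality} (this is $(\ref{6.4})$ with exponent $p/2$); no interlacing argument is needed.

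The genuine gap is in $(2c)$. Your sentence ``by outer regularity it reduces to open sets $U_i\uparrow U$'' is where the argument breaks. Outer regularity lets you choose open $U_i\supset E_i$ with ${\sf cap}_p(U_i)\le {\sf cap}_p(E_i)+\varepsilon 2^{-i}$, but these $U_i$ are \emph{not} increasing; if you force monotonicity by replacing $U_i$ with $V_j:=\bigcup_{i\le j}U_i$, ordinary subadditivity only gives ${\sf cap}_p(V_j)\le \sum_{i\le j}{\sf cap}_p(U_i)$, which blows up. What is needed is the incremental inequality
\[
{\sf cap}_p\Big(\bigcup_{i\le j}U_i\Big)-{\sf cap}_p\Big(\bigcup_{i\le j}E_i\Big)\le \sum_{i\le j}\big({\sf cap}_p(U_i)-{\sf cap}_p(E_i)\big)\le \varepsilon,
\]
and this is exactly Lemma \ref{lem3.12}, which the paper proves from strong subadditivity and then uses as the key step for both $(2c)$ and $3)$. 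Your proposal never isolates this lemma, and the ``standard $\varepsilon/2^i$ argument'' you invoke for $3)$ does not supply it either.

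Moreover, once one \emph{does} have increasing open sets $U_i\uparrow U$, your claim ``one cannot simply fit $K$ inside a single $U_i$'' is false: any compact $K\subset U$ lies in some $U_{i_0}$ by compactness, so ${\sf cap}_p^o(K)\le {\sf cap}_p(U_{i_0})\le \lim_i{\sf cap}_p(U_i)$ immediately. The whole equilibrium-potential/reflexivity/Mazur machinery is therefore unnecessary for this step, and in fact problematic in this setting: weak limits in $\cd_p$ need not be continuous, hence need not lie in $W(K)=\{w\in\cd_p\cap C(X):w\ge 1\text{ on }K\}$, so your final sentence ``an arbitrarily small modification of $u$ lies in $W(K)$'' is not justified.

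Finally, your sketch for $4)$ is muddled: the compacts $K_i\subset U_i$ you pick have no reason to contain any part of $E$, so $e_{K_i}\ge 1$ on $E$ fails. The paper instead reduces to compact $E$ (via a Borel envelope and Choquet capacitability), and then for any $u\in W(E)$ uses $u\ge 1$ on $E$ together with coercivity (Theorem \ref{thm3.5}\,$b)$) to get $m(E)\le \|u\|_{\cd_p}^p\le c\,\langle{\sf L}_p u,u\rangle$, whence $m(E)\le c\,{\sf cap}_p^o(E)=0$.
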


\begin{proof}
To prove assertion $1)$ we argue as in  \cite{Beu}, page 71.
From $(\ref{6.4})$ we get 
$\int_X \Gamma(u\wedge v)^{\frac p 2} dm +  \int_X \Gamma(u\vee v)^{\frac p 2} dm = \int_X  \Gamma(u)^{\frac p 2} dm + \int_X  \Gamma(v)^{\frac p 2}  dm$.
This implies that if $u\in W(K)$ and $v\in W(L)$ then $u\wedge v\in W(K\cap L)$, $u\vee v \in W(K \cup L)$ and therefore
$$
{\sf cap}_p (K\cup L) + {\sf cap}_p(K \cap L) 
\leq  \int_X \Gamma(u\wedge v)^{\frac p 2} dm +  \int_X \Gamma(u\vee v)^{\frac p 2} dm =
\int_X  \Gamma(u)^{\frac p 2} dm + \int_X  \Gamma(v)^{\frac p 2}  dm.
$$
Taking now the infimum over  $W(K)$ and  $W(L)$ we get the strong subadditivity condition for $K$ and $L$.

Clearly, property $(2a)$ is an immediate consequence of the definition.

$(2b)$ We argue as in the proof of assertion $1)$ of Proposition \ref{prop3.10}. 
Let $\varepsilon >0$, $u\in W(K)$, and 
$U_\varepsilon:= \{ x\in X: u > 1-\varepsilon \}$. 
Then $K\subset U_\varepsilon$ and there exists $i_o$ such that  $K_{i_o} $ lies in the open set $U_\varepsilon$.
Consequently,  $\frac{1}{1-\varepsilon} u \in W(K_{i_o})$ and therefore, using also assertion $1)$ of Proposition  \ref{prop3.10}, we have 
${\sf cap}_p (K_{i_o}) = {\sf cap}^o_p (K_{i_o})\leq     (1- \varepsilon)^{-p}  \int_X \Gamma (  u)^{ \frac{p}{2} }dm$
and letting $\varepsilon \longrightarrow 0$ we get
$\lim_i {\sf cap}_p (K_i)\leq   \int_X \Gamma (  u)^{ \frac{p}{2} }dm$ for all $u\in W(K)$.
Taking the infimum over  all $u\in W(K)$ we get
${\sf cap}_p (K) \leq \lim_i {\sf cap}_p (K_i) \leq {\sf cap}_p (K)$.

To prove assertions $(2c)$  and $3)$ we need the following lemma, which is an adaptation  of Lemma 2.3 from \cite{HKM} to our more general frame. 
For the reader convenience we present its proof in Appendix {\bf A3}.

\begin{lem} \label{lem3.12}
Let $E_1,\ldots , E_k \subset X$ and  $F_i\subset E_i$, $i=1, \ldots , k$,  such that 
${\sf cap}_p (\bigcup_{i=1}^k F_i ) <\infty$. 
Then
\begin{equation} \label{3.8}
{\sf cap}_p (\bigcup_{i=1}^k E_i ) - {\sf cap}_p (\bigcup_{i=1}^k F_i ) \leq
\sum_{i=1} ^k ( {\sf cap}_p (E_i ) - {\sf cap}_p (F_i )).
\end{equation}
\end{lem}

$(2c)$ By the monotonicity property  $1)$ we only have to prove that 
${\sf cap}_p(E)\leq \lim_i {\sf cap}_p(E_i) $ and we may assume that ${\sf cap}_p(E_i) <\infty$ for all $i$.
Let $\varepsilon>0$ be fixed and for each $i$ choose an open set $U_i$ such that $E_i\subset U_i$
and 
${\sf cap}_p (U_i ) \leq {\sf cap}_p (E_i ) + \frac{\varepsilon}{2^i} $. 
Because 
${\sf cap}_p (\bigcup_{i=1}^k E_i )= {\sf cap}_p (E_k )<\infty$ for each $k$,  by Lemma \ref{lem3.12} we get
${\sf cap}_p (\bigcup_{i=1}^k U_i ) - {\sf cap}_p (\bigcup_{i=1}^k E_i ) \leq
\sum_{i=1} ^k  \frac{\varepsilon}{2^i} ,\varepsilon.$
If $K\subset \bigcup_{i=1}^\infty U_i$ is compact, then $K\subset \bigcup_{i=1}^k U_i$ for some $k$ and we have
${\sf cap}_p(K) \leq {\sf cap}_p(  \bigcup_{i=1}^k U_i) \leq {\sf cap}_p(  \bigcup_{i=1}^k E_i) +\varepsilon
\leq \lim_k {\sf cap}_p( E_k) +\varepsilon$.
We conclude that
${\sf cap}_p(E) \leq {\sf cap}_p (\bigcup_{i=1}^\infty U_i )= \sup\{ {\sf cap}_p (K): K \mbox{ compact, } K\subset \bigcup_{i=1}^\infty U_i \}
\leq \lim_k {\sf cap}_p(E_k) + \varepsilon$ 
and the proof of $(2c)$ is complete.

$3)$ Lemma \ref{lem3.12} implies that the finite version of $3)$ holds. 
Because $\bigcup_{i=1}^kE_i$ increases to  $\bigcup_{i=1}^\infty E_i$, we may  applying $(2c)$ to obtain  $3)$.

$4)$ Because ${\sf cap}_p(E) = 0$,  there exists a Borel set $E_o$ such that $E\subset E_o$ and ${\sf cap}_p(E_o) = 0$.
So, replacing $E$ by $E_o$, we may assume that $E\in \cb(X)$.
Recall that by the Choquet capacitability theorem any Borel set (actually, any analytic set) is capacitable, that is, 
${\sf cap}_p(E)= \sup\{ {\sf cap}_p (K): K \mbox{ compact, } K\subset E \}$.
Therefore, we may assume that $E$ is compact and in this case, by assertion $1)$ of Proposition $\ref{prop3.10}$,  we have
${\sf cap}_p(E)={\sf cap}^o_p(E)$.
Let $u\in W(E)$, then since $u\geq 1$ on $E$ and by assertion $b)$ of Theorem $\ref{thm3.5}$ we have
$m(E)\leq \| u \|^p_{\cd_p} \leq c \cdot \langle {\sf L}_p u, u\rangle$.
We conclude that
$m(E)\leq c \cdot
\inf\{  \langle {\sf L}_p u, u\rangle: u \in W(E) \}= {\sf cap}^o_p(E)=0.$
\end{proof}

\begin{rem} \label{rem3.13}   
$1)$ The function $e_K$  from assertion $2)$ of Proposition \ref{prop3.10} is called {\rm equilibrium potential}.

$2)$ Assertion $4)$ of Theorem \ref{thm3.11} and its proof are  standard,
the coercivity of the form $\ce$ is used essentially here.
For the case of the $p$-Laplace operator see Lemma 2.10 from \cite{HKM}.

$3)$  For convenience we assumed that $(\ce,\cd(\ce))$ is a regular Dirichlet form on a locally compact separable metric space $X$.
However, the results from this section may be extended to the quasi-regular case (cf. \cite{MR}), on a general Lusin topological space $X$.

$4)$ As we mentioned at the end of the Introduction, an  independently achieved  result on the $p$-energy forms and the induced capacity is contained  in \cite{K}. 
However, our $p$-form is closer to the classical situation. Indeed, recall that in $(\ref{p-gradient})$ we succeeded to expressed the $p$-form  $\ce^p$ by means of a gradient operator. 
In this way, comparing $(\ref{p-gradient})$ with $(\ref{3.5})$, we emphasised that $\ce^p$  is a generalisation of the $p$-form associated with the $p$-Laplace operator.
The capacity in \cite{K} is constructed starting with the open sets and the existence of the equilibrium potentials.
We constructed the capacity starting with compacts, following the approach from \cite{HKM} for the $p$-Laplace operator.
We also proved the existence of the equilibrium potentials  following \cite{Beu}, (see Proposition \ref{prop3.10}). 
Note that we need continuous functions from $\cd_p$ to define the capacity, therefore, in general our capacity might be  different from the capacity considered in \cite{K}.

$5)$ Energy forms on $L^p$-spaces and associated capacities have been studied  in \cite{HJ} and \cite{JS}, 
using the $\Gamma$-transform of a given $L^p$ sub-Markovian semigroup.
\end{rem}

\section{The nonlinear Dirichlet $p$-form  associated with a quasiregular mapping} \label{sect4}. 

In the previous section
it was proved that to each regular symmetric strongly local (linear) 
Dirichlet form 
$(\ce,\cd(\ce))$ on $\mbox{\sf L}^2(X,m)$, admitting a carr\'e du champ. $\Gamma$, 
and every real number $p>1$, one can associate a nonlinear form 
$\ce^p$ by the formula
$\ce^p(u,v)=\int_X\Gamma(u)^\frac{p-2}{2}\Gamma(u,v)dm,
$
where $u,v\in\cd_p:=\{w\in\cd(\ce)\cap
\mbox{\sf L} ^p(X,m): \Gamma(w)^\frac{1}{2}
\in\mbox{\sf L}^p(X,m)\}$.
It turns out that $(\ce^p,\cd_p)$ is a nonlinear Dirichlet form in the sense 
introduced by P. van Beusekon  in \cite{Beu}.

Further on,  using the above procedure, 
we associate with each $n$-dimensional quasiregular mapping 
$f$ a nonlinear Dirichlet form $\ce^n$ 
$(p=n)$, such that the components of $f$ become harmonic functions with respect to
$\ce^n$.

We prove in Section \ref{sect6} below Caccioppoli type inequalities on balls in the intrinsic metric on $X$
generated by a general Dirichlet form $\ce$ as above, for functions that are harmonic with respect to $\ce^p$.
We deduce in this way the usual Caccioppoli type inequality in $\R^n$
(cf. [BI]).

Let $(\ce,\cd(\ce))$ be a symmetric regular Dirichlet form on 
$\mbox{\sf L}^2(X,m)$ 
that is strongly local and admits a carr\'e du champ operator $\Gamma$
(cf. [FOT] and [BH2]). If $U$ is an open subset of $X$ then we put
$$
\cd_{p}|_U:=\{ u:U\longrightarrow\R:  \mbox{ there exists }  \bar{u}\in\cd_p, \, u=\bar{u} \;
m-\mbox{a.e.} \mbox{ on } U\}
$$
$$
\cd_p(U)_c:=\{v\in\cd_p : \pp v\subset U \mbox{ compact}\}
$$
For every $u\in\cd_p|_U$ and $v\in\cd_p(U)_c$ we define
$$
\ce^p(u,v):=\ce^p(\bar{u},v),
$$
where
$\overline{u}\in\cd_p$ and $u=\overline{u}$ 
$m$-a.e. on $U$.
Since  $\Gamma(u)=0$ on  $U$ provided $u=0$ $m$-a.e. on  $U$,
we conclude that the above definition is correct.

We set:
$$
(\cd_p)_l:=\lbrace f: X\longrightarrow\R:   \mbox{ for all }  x\in X
\mbox{ there exists } V \mbox{ open with  } x \in V\mbox{ and }
f{\vert_V} \in\cd_{p}\vert_V \rbrace .
$$

A function  $u\in(\cd_p)_{l}$ 
is called  $\mbox{\sf L}_p$-{\it harmonic on an open set } $U$ provided that 
$\ce^p(u, v) = 0
$
for each open set  $V\subset U$ such that
$u_{\vert_V}\in\cd_{p}\vert_V$ \c and all $v\in\cd_p(V)_c$.

Let  $\Omega$ be a domain in $\R^{p}$ and
${\cal A}: \Omega\times\R^{p}\longrightarrow\R^{p}$ 
defined by
\begin{equation} \label{4.1}
{\cal A}(x,\xi )
:=(G(x)\xi,\xi)^{\frac{p-2}{2}}G(x)\xi, 
\end{equation}
where  $G(x)$ is  a symmetric positive definite  $p\times p$-matrix 
of measurable functions on  $\Omega$,
and there exists two strictly positive constants  $\alpha$ and 
$\beta$ such that the following condition of uniform ellipticity holds:
\begin{equation} \label{4.2}
\alpha|\xi|^{2}\leq (G(x)\xi,\xi)\leq\beta|\xi|^{2} \, \mbox{ for all } x\in\Omega \mbox{ and } \xi\in\R^{p}.
\end{equation}

A function $u\in W_{loc}^{1,p}(\Omega)$  is called
{\it  weak solution of the equation}
$$
 \di {\cal A}(x,\nabla u(x))=0
$$
provided that 
$$
\displaystyle\int\limits_{\Omega}
({\cal A}(x,\nabla u(x)), \nabla v(x)) dx = 0
\quad \mbox{ for all }  \, v\in C_{c}^{\infty}(\Omega).
$$

\vspace{-2mm}

The solutions that are continuous functions are named 
{\it ${\cal A}$-harmonic} (see [HKM]).

\begin{rem} \label{rem4.1} 
The uniform ellipticity condition $(\ref{4.2})$ allows
the construction, for each  ${{\cal A}}$,
of a linear  regular Dirichlet form, induced by the matrix 
$G$  and further of the nonlinear  
$p$-form
$(\ce^p,\cd_p)$ associated 
$\ce^p(u,v)=  \langle \mbox{\sf L}_p u, v  \rangle$, a
nonlinear Dirichlet form.
Note that if $u\in W_{loc}^{1,p}(\Omega)$, 
then  $u\in(\cd_p)_l$; cf. Theorem 1.2.2 in [AH].
From  the  expression of the  carr\'{e} du champ operator 
in this particular case, we deduce 
that an ${{\cal A}}$-harmonic function is 
$\mbox{\sf L}_p$-harmonic on $X$.
\end{rem}

{\it In the sequel we fix a quasiregular mapping
$f:\Omega\longrightarrow\R^{p}$;} 
see Appendix {\bf A4} for the definition and basic facts on the quasiregular mappings.

We consider the matrix $\theta_{f}$,  given by $(\ref{7.10})$,  associated with $f$.
Since $f$ is quasiregular, 
the uniform ellipticity condition $(\ref{7.11})$ holds.
It follows that that 
$G:=\theta_{f}$
is a symmetric positive definite   $p\times p$-matrix of measurable functions
on  $\Omega$, satisfying the uniform ellipticity condition.
Therefore,  by $(\ref{4.1})$ we may define  ${\cal A}$, depending on $f$.

The following result may be deduced from Theorem 5.1 in \cite{Re2}; see also 
\cite{Be} and \cite{HKM}.

\begin{prop} \label{prop4.2}
The components $f^i$, $1\leq i\leq p$, of $f=(f^1,...,f^p)$ are $\ca$-harmonic
functions.
\end{prop}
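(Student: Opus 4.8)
\textbf{Proof proposal for Proposition \ref{prop4.2}.}

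The plan is to reduce the statement to the classical fact that a quasiregular mapping $f=(f^1,\dots,f^p)$ satisfies, in the weak sense, the system of equations $\di\,\ca(x,\nabla f^i)=0$ with $\ca$ built from the distortion matrix $\theta_f$, and then invoke Remark \ref{rem4.1} to conclude $\mbox{\sf L}_p$-harmonicity. First I would recall, from Appendix \textbf{A4}, the basic Sobolev regularity of a quasiregular map: $f\in W^{1,p}_{loc}(\Omega)$ (here $p=n$ is the dimension), the Jacobian $J_f\geq 0$ a.e., and the pointwise distortion inequality $|Df(x)|^p\leq K\, J_f(x)$ a.e. The key object is the matrix $\theta_f$ defined in $(\ref{7.10})$, namely $\theta_f(x)=J_f(x)^{2/p}\,(Df(x)^{t}Df(x))^{-1}$ where $Df$ is invertible, and $\theta_f(x)=\mathrm{Id}$ otherwise; the quasiregularity hypothesis is exactly what yields the uniform ellipticity bound $(\ref{7.11})$, so that $G:=\theta_f$ fits the framework of $(\ref{4.1})$–$(\ref{4.2})$ and the operator $\ca(x,\xi)=(G(x)\xi,\xi)^{(p-2)/2}G(x)\xi$ is well defined.

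The heart of the matter is to verify that each component $f^i$ is a weak solution of $\di\,\ca(x,\nabla f^i)=0$, i.e. that
$$
\int_\Omega \big(\ca(x,\nabla f^i(x)),\nabla\varphi(x)\big)\,dx=0\qquad\text{for all }\varphi\in C_c^\infty(\Omega).
$$
This is precisely the content of Theorem 5.1 in \cite{Re2} (Reshetnyak); I would cite it, and also point to \cite{Be} and \cite[Ch.~14]{HKM} for the same computation. The algebraic identity underlying it is that, at a.e. point where $Df$ is invertible, $(\theta_f\nabla f^i,\nabla f^j)$ collapses — by the definition of $\theta_f$ as a rescaled inverse of $Df^t Df$ — so that $(\theta_f\nabla f^i,\nabla f^i)^{(p-2)/2}\theta_f\nabla f^i$ has a divergence expressible through $J_f$ and the (distributional) Jacobian cofactors, which vanish against test functions by the weak continuity / null-Lagrangian property of the Jacobian determinant for $W^{1,p}$ maps. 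On the set $\{J_f=0\}$ one uses $|Df|=0$ a.e. there (again a consequence of quasiregularity, via Lemma/Sard-type arguments in Appendix \textbf{A4}), so the integrand vanishes. Hence each $f^i$ is a weak solution, and being continuous (quasiregular maps are continuous, indeed locally Hölder), each $f^i$ is $\ca$-harmonic in the sense defined before Remark \ref{rem4.1}.

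Finally, I would close the loop: by Remark \ref{rem4.1}, since $f^i\in W^{1,p}_{loc}(\Omega)$ we have $f^i\in(\cd_p)_l$ (Theorem 1.2.2 in [AH]), the linear regular Dirichlet form $\ce$ induced by $G=\theta_f$ has carr\'e du champ operator $\Gamma(u,v)=2(G\nabla u,\nabla v)$, and the associated nonlinear $p$-form satisfies $\ce^p(u,v)=\langle\mbox{\sf L}_p u,v\rangle$ with $\mbox{\sf L}_p=\ca$ up to the harmless constant; thus an $\ca$-harmonic function is $\mbox{\sf L}_p$-harmonic on $X=\Omega$, which is the assertion. The main obstacle is the second step — establishing the weak PDE for the components — but this is genuinely a citation to Reshetnyak rather than something to be reproved here; the only care needed in our write-up is to match the normalisation of $\theta_f$ in $(\ref{7.10})$ with the $\ca$ of $(\ref{4.1})$ so that ``$\ca$-harmonic'' in \cite{Re2} literally coincides with ``$\ca$-harmonic'' in our sense, and to note that continuity of $f$ upgrades ``weak solution'' to ``$\ca$-harmonic''.
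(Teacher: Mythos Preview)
Your proposal is correct and takes exactly the same route as the paper: the paper does not prove Proposition~\ref{prop4.2} at all but simply states that it ``may be deduced from Theorem 5.1 in \cite{Re2}; see also \cite{Be} and \cite{HKM}'', which is precisely the citation you invoke. Your added heuristic explanation (null-Lagrangian property of Jacobian cofactors, vanishing of $Df$ on $\{J_f=0\}$) is welcome extra detail; note only that your final paragraph, passing from $\ca$-harmonicity to $\mbox{\sf L}_p$-harmonicity via Remark~\ref{rem4.1}, goes beyond what Proposition~\ref{prop4.2} asserts and properly belongs to the proof of Theorem~\ref{thm4.3}.
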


\begin{rem} \label{rem4.3}
A second example of ${\cal A}$-harmonic function is $\ln |f|$, where 
$f:\Omega\rightarrow\R^n\setminus\{0\}$ is quasiregular (cf. [BI] or [Re1]).
\end{rem}

\begin{thm} \label{thm4.3}
Let  $\Omega\subset\R^p$ be a bounded domain and
$f:\Omega\longrightarrow\R^p$  a quasiregular mapping.
Then there exists a nonlinear Dirichlet $p$-form 
$(\ce^p,\cd_p)$ such that the components of  
$f$ as well as  $\ln\vert f\vert$, whenever
$f:\Omega\longrightarrow\R^p\setminus\{0\}$, 
are $\mbox{\rm \sf L}_p$-harmonic functions, 
where  $\mbox{\rm \sf L}_p$ is the nonlinear operator
generating the form. 
\end{thm}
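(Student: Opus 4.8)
The plan is to assemble Theorem \ref{thm4.3} from the machinery already developed, treating it essentially as a corollary of Theorem \ref{thm3.8}, Remark \ref{rem4.1}, Proposition \ref{prop4.2}, and Remark \ref{rem4.3}. First I would produce the matrix $G$: since $f$ is quasiregular, the associated matrix $\theta_f$ from $(\ref{7.10})$ is symmetric, positive definite, with measurable entries, and satisfies the uniform ellipticity condition $(\ref{7.11})$; thus setting $G:=\theta_f$ puts us exactly in the situation of $(\ref{4.1})$--$(\ref{4.2})$, so ${\cal A}(x,\xi)=(G(x)\xi,\xi)^{(p-2)/2}G(x)\xi$ is well defined. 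Then I would invoke Remark \ref{rem4.1}: the uniform ellipticity of $G$ yields a linear regular strongly local Dirichlet form $(\ce,\cd(\ce))$ on $L^2(\Omega,dx)$ admitting a carr\'e du champ operator $\Gamma$ (the one given by the quadratic form attached to $G$), and this linear form is coercive on the bounded domain $\Omega$ by the Poincar\'e inequality (as in the proof of Corollary \ref{cor3.9}). By Theorem \ref{thm3.8}, the associated nonlinear $p$-form $(\ce^p,\cd_p)$ is then a nonlinear Dirichlet form, with nonlinear generator $\mbox{\sf L}_p$ satisfying $\ce^p(u,v)=\langle\mbox{\sf L}_p u,v\rangle$.

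Next I would make the bridge between ${\cal A}$-harmonicity (in the PDE/weak-solution sense) and $\mbox{\sf L}_p$-harmonicity (in the Dirichlet-form sense). This is precisely the content of the last sentence of Remark \ref{rem4.1}: for this particular $G$, the carr\'e du champ operator is $\Gamma(u)=(G\nabla u,\nabla u)$ (up to the usual normalisation), so $\ce^p(u,v)=\int_\Omega (G\nabla u,\nabla u)^{(p-2)/2}(G\nabla u,\nabla v)\,dx=\int_\Omega ({\cal A}(x,\nabla u),\nabla v)\,dx$; hence a weak solution of $\mbox{\sf div}\,{\cal A}(x,\nabla u)=0$ that is continuous is automatically $\mbox{\sf L}_p$-harmonic on $\Omega$, using that $W^{1,p}_{loc}(\Omega)\subset(\cd_p)_l$ (Theorem 1.2.2 in [AH], cited there) and a density argument to pass from test functions in $C_c^\infty$ to the localised test functions $v\in\cd_p(V)_c$. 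I would state this explicitly as the key reduction step.

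With this in hand the theorem is immediate: by Proposition \ref{prop4.2} the components $f^1,\dots,f^p$ are ${\cal A}$-harmonic, hence continuous weak solutions of $\mbox{\sf div}\,{\cal A}(x,\nabla f^i)=0$, hence $\mbox{\sf L}_p$-harmonic; and by Remark \ref{rem4.3}, when $f:\Omega\to\R^p\setminus\{0\}$ the function $\ln|f|$ is likewise ${\cal A}$-harmonic (with the same ${\cal A}$, i.e.\ the same $G=\theta_f$), hence $\mbox{\sf L}_p$-harmonic by the same argument. One should remark that quasiregularity guarantees $\ln|f|\in W^{1,p}_{loc}(\Omega)$ (standard, via the Caccioppoli-type estimate for $\ln|f|$), so it indeed belongs to $(\cd_p)_l$ and the notion of $\mbox{\sf L}_p$-harmonicity applies to it.

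The only genuine subtlety — the "hard part" — is verifying carefully that the carr\'e du champ of the linear form built from $G$ is exactly $\Gamma(u)=(G\nabla u,\nabla u)$ so that $\ce^p$ matches the ${\cal A}$-form on the nose, and that the passage between the weak-solution test class $C_c^\infty(\Omega)$ and the Dirichlet-form test class $\cd_p(V)_c$ causes no loss (density of $C_c^\infty$ in $\cd_p(V)_c$ with respect to $\|\cdot\|_{\cd_p}$, plus the continuity/boundedness of $v\mapsto\ce^p(u,v)$ established via the sector condition of Proposition \ref{prop3.3}\,$b)$). Both points are essentially bookkeeping given the Appendix material, but they are where the argument actually has content; everything else is a direct citation of the results proved above. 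I would therefore keep the written proof short, emphasising the identification $\ce^p(u,v)=\int_\Omega({\cal A}(x,\nabla u),\nabla v)\,dx$ and then simply chaining Proposition \ref{prop4.2}, Remark \ref{rem4.3}, Remark \ref{rem4.1} and Theorem \ref{thm3.8}.
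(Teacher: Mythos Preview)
Your proposal is correct and follows essentially the same route as the paper: the paper's proof is the one-line ``the assertion follows from Remark \ref{rem4.1}, Proposition \ref{prop4.2}, and Remark \ref{rem4.3},'' which is exactly the chain you describe, and your additional unpacking (identifying $\Gamma(u)=(G\nabla u,\nabla u)$, invoking Theorem \ref{thm3.8} via Remark \ref{rem4.1}, and handling the test-class passage) simply makes explicit what the paper leaves implicit in that remark.
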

\begin{proof}
The assertion follows from Remark \ref{rem4.1}, Proposition \ref{prop4.2},  
and Remark \ref{rem4.3} (for the function $\ln|f|$).
\end{proof}

\section{Caccioppoli type inequlities} \label{sect6}

We obtain Caccioppoli type inequalities in the
general context given by a Dirichlet form and then we 
apply these results to the quasiregular mappings.

{\it In this section $(\ce,\cd(\ce))$ 
is a strongly local regular Dirichlet form, admitting a
carr\'{e} du champ opeartor.}

\begin{prop} \label{prop5.1}
Let  $U\subset X$ be an open set,
$u\in\cd_p\vert_U$ a
$\mbox{\rm\sf L}_p$-harmonic function on $U$ and 
$\varphi\in\cd(\ce)\cap L^\infty(X,m)$
having compact support in $U$, $\varphi\geq 0$ 
and $\Gamma(\varphi)$  bounded.
Then for each  $c\in\R$ we have
$$
\left(\displaystyle\int\limits_X
\varphi^p\Gamma(u)^\frac{p}{2}dm\right)^\frac{1}{p}
\leq p\left(
\displaystyle\int\limits_X
\Gamma(\varphi)^\frac{p}{2}\vert u-c\vert^pdm\right)
^\frac{1}{p}.
$$
\end{prop}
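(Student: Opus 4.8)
The plan is to mimic the classical proof of the Caccioppoli inequality for $\mathcal A$-harmonic functions (as in \cite{HKM}), but carried out entirely in the language of the carr\'e du champ operator $\Gamma$. First I would introduce the test function $v := \varphi^p (u-c)$. The key point is to verify that $v$ is an admissible test function, i.e. that $v \in \cd_p(U)_c$: since $\varphi \in \cd(\ce)\cap L^\infty$ with $\Gamma(\varphi)$ bounded, assertion $e)$ of Proposition \ref{prop3.1} gives $\varphi \in \cd_p$; since $u - c \in \cd_p|_U$ locally and $\varphi$ has compact support in $U$, the product rules from Proposition \ref{prop3.1} $d)$ (together with the truncation arguments, replacing $u$ by $((-n)\vee u)\wedge n$ if necessary and passing to the limit using $f)$) show $\varphi^p(u-c)$ extends to an element of $\cd_p$ with compact support in $U$.

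Next I would compute $\Gamma(u, v)$ using the Leibniz rule for $\Gamma$. Writing $v = \varphi^p(u-c)$, one gets
$$
\Gamma(u, v) = \varphi^p\, \Gamma(u) + p\,\varphi^{p-1}(u-c)\,\Gamma(u,\varphi).
$$
Since $u$ is $\mathsf{L}_p$-harmonic on $U$ and $v \in \cd_p(U)_c$, we have $\ce^p(u,v) = \int_X \Gamma(u)^{\frac{p-2}{2}}\Gamma(u,v)\,dm = 0$, hence
$$
\int_X \varphi^p\,\Gamma(u)^{\frac p2}\,dm = -\,p\int_X \varphi^{p-1}(u-c)\,\Gamma(u)^{\frac{p-2}{2}}\,\Gamma(u,\varphi)\,dm.
$$
Now I would estimate the right-hand side: by the Cauchy–Schwarz inequality $(\ref{6.3})$ for $\Gamma$, $|\Gamma(u,\varphi)| \le \Gamma(u)^{\frac12}\Gamma(\varphi)^{\frac12}$, so the integrand is bounded by $p\,\varphi^{p-1}|u-c|\,\Gamma(u)^{\frac{p-1}{2}}\,\Gamma(\varphi)^{\frac12}$. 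Applying H\"older's inequality with exponents $\frac{p}{p-1}$ and $p$, pairing $\big(\varphi\,\Gamma(u)^{\frac12}\big)^{p-1}$ with $|u-c|\,\Gamma(\varphi)^{\frac12}$, gives
$$
\int_X \varphi^p\,\Gamma(u)^{\frac p2}\,dm \;\le\; p\left(\int_X \varphi^p\,\Gamma(u)^{\frac p2}\,dm\right)^{\frac{p-1}{p}}\left(\int_X |u-c|^p\,\Gamma(\varphi)^{\frac p2}\,dm\right)^{\frac1p}.
$$
Dividing through by $\big(\int_X \varphi^p\Gamma(u)^{\frac p2}dm\big)^{\frac{p-1}{p}}$ (which is finite and, if zero, makes the claim trivial) yields exactly the asserted inequality after taking $p$-th roots.

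The main obstacle I expect is the first step: rigorously justifying that $v = \varphi^p(u-c)$ is a legitimate test function in $\cd_p(U)_c$ and that the Leibniz rule for $\Gamma$ applies in this generality. The subtlety is that $u$ is only locally in $\cd_p$ and $u-c$ need not be bounded, so one must combine the truncation procedure of Proposition \ref{prop3.1} $f)$ with the product estimates of Proposition \ref{prop3.1} $b),d)$ and a localisation via a cutoff, then pass to the limit; one also needs the chain rule to see $\varphi^p \in \cd(\ce)\cap L^\infty$ with bounded energy density. Once admissibility is secured, the rest is the routine Cauchy–Schwarz/H\"older computation sketched above. I would also remark that the constant $c$ is arbitrary precisely because $\Gamma(u-c) = \Gamma(u)$ and constants are harmless in the strongly local setting ($\mu_{\langle 1\rangle} = 0$).
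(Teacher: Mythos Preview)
Your proposal is correct and follows essentially the same route as the paper's proof: test function $\varphi^p(u-c)$, Leibniz rule expansion of $\Gamma(u,\cdot)$, harmonicity to kill the left side, then Cauchy--Schwarz $(\ref{6.3})$ followed by H\"older with exponents $\tfrac{p}{p-1}$ and $p$. The only organizational difference is that the paper works from the outset with the truncated test function $\eta_n=\varphi^p(u_n-c)$, $u_n=((-n)\vee u)\wedge n$, so that $\eta_n\in\cd_p(U)_c$ is immediate from Proposition~\ref{prop3.1}, carries the estimate through for each $n$, and only then passes to the limit; you instead propose to argue admissibility of $v=\varphi^p(u-c)$ directly and mention truncation as a backup---in practice you will end up doing exactly the paper's limiting argument.
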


\begin{proof}
Let us first remark that, since $\varphi$  and 
$\Gamma(\varphi)$ are bounded functions,
we  deduce that for each $\beta>0$ we have
$$
\varphi^\beta\in\cd_p(G)_c.
$$
Let $\eta:=\varphi^p(u-c)$
and
$\eta_n:=\varphi^p(u_n-c)$ where
$u_n:=(u\land n)\lor (-n)$.
It is known that $u_n\longrightarrow u$ in $\cd(\ce)$  and from Proposition \ref{prop3.1}
we deduce that $\eta_n\in\cd_p(G)_c$.
The function $u$ being $\mbox{\sf L}_p$-harmonic on $U$ and applying 
the chain rule, we get
$$
0=\ce^p(u,\eta_n)= \!\!
\displaystyle\int\limits_X
\Gamma(u)^\frac{p-2}{2}\Gamma(u,\varphi^p(u_n-c))dm=\!\!
\displaystyle\int\limits_X
\Gamma(u)^\frac{p-2}{2}\varphi^p\Gamma(u,u_n-c)dm+
\displaystyle\int\limits_X
\Gamma(u)^\frac{p-2}{2}(u_n-c)\cdot p\varphi^{p-1}\Gamma(u,\varphi)dm
$$
$$
=\displaystyle\int\limits_X
\Gamma(u)^\frac{p-2}{2}\varphi^p\Gamma(u, u_n)dm +
p\displaystyle\int\limits_X
\varphi^{p-1}(u_n-c)\Gamma(u)^\frac{p-2}{2}\cdot\Gamma(u,\varphi)dm.
$$
By the  H\"{o}lder inequality we obtain now
$$
\displaystyle\int\limits_X
\Gamma(u)^\frac{p-2}{2}\varphi^p\Gamma(u,u_n)dm\leq 
p\displaystyle\int\limits_X
\varphi^{p-1}\vert u_n-c\vert\Gamma(u)^\frac{p-2}{2}\Gamma(u,\varphi)dm\leq
p\displaystyle\int\limits_X
\Gamma(u)^\frac{p-2}{2}\varphi^{p-1}\vert u_n-c\vert
\Gamma(u)^\frac{1}{2}\Gamma(\varphi)^\frac{1}{2}dm=
$$
$$
p\displaystyle\int\limits_X
\Gamma(u)^\frac{p-1}{2}\varphi^{p-1}\vert u_n-c\vert\Gamma(\varphi)^
\frac{1}{2}dm\leq
p\left(\displaystyle\int\limits_X
(\Gamma(u)^\frac{p-1}{2}\cdot\varphi^{p-1})^\frac{p}{p-1}dm
\right)^\frac{p-1}{p}\cdot\left(
\displaystyle\int\limits_X
\vert u_n-c\vert^p\Gamma(\varphi)^
\frac{p}{2}dm\right)^\frac{1}{p}=
$$
$$
p\left(\displaystyle\int\limits_X
\Gamma(u)^\frac{p}{2}\varphi^pdm\right)^\frac{p-1}{p}\cdot
\left(\displaystyle\int\limits_X
\Gamma(\varphi)^\frac{p}{2}\vert u_n-c\vert^pdm\right)^\frac{1}{p}.
$$
Therefore we have
$$
\displaystyle\int\limits_X
\Gamma(u)^\frac{p-2}{2}\varphi^p\Gamma(u,u_n)dm\leq
p\left(\displaystyle\int\limits_X
\Gamma(u)^\frac{p}{2}\varphi^pdm\right)^\frac{p-1}{p}\cdot
\left(\displaystyle\int\limits_X
\Gamma(\varphi)^\frac{p}{2}\vert u_n-c\vert^pdm\right)^\frac{1}{p}.
$$
Passing to the limit, $u_n\longrightarrow u$ \ii n
$\cd(\ce)$ and 
$L^p(X,m)$, we conclude that 
$$
\displaystyle\int\limits_X
\Gamma(u)^\frac{p}{2}\varphi^pdm\leq p\left(
\displaystyle\int\limits_X
\Gamma(u)^\frac{p}{2}\varphi^pdm\right)^\frac{p-1}{p}\cdot
\left(
\displaystyle\int\limits_X
\Gamma(\varphi)^\frac{p}{2}\vert u-c\vert^pdm\right)^\frac{1}{p}
$$
and consequently we get the required inequality.
\end{proof}

\noindent
{\bf Definition.}
Let  $D \subset U$ be an open relatively compact subset of $U$.
We say that $D$ {\it admites a truncation function} in $U$
if there exists  $\varphi\in \cd(\ce)$,
having the following properties:\\
$0\leq\varphi\leq 1$,
$\Gamma(\varphi)$ bounded, $\varphi$ has
compact support in $U$ and $\varphi=1$ on  $\overline{D}$.

\begin{thm}\label{thm5.2}
Let $U$ be an open subset of  $X$, $u\in\cd_p\vert_U$ a 
$\mbox{\rm\sf L}_p$-harmonic function on  $U$ and
$D\subset U$ an open relatively compact subset of $U$, 
admitting a truncation function $\varphi$ in $U$.
Then for each $c\in\R$ we have
$$
\left(\displaystyle\int\limits_D
\Gamma(u)^\frac{p}{2}dm\right)^\frac{1}{p}\leq p\cdot k
\left(
\displaystyle\int\limits_F
\vert u-c\vert^pdm\right)^\frac{1}{p}
$$
where
$F=\pp\varphi$ \c si $\Gamma(\varphi)\leq k^2$.
\end{thm}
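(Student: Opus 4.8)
The plan is to obtain Theorem \ref{thm5.2} as a direct localization of the estimate in Proposition \ref{prop5.1}, applied to the truncation function $\varphi$ itself. First I would check that $\varphi$ is an admissible test function for Proposition \ref{prop5.1}: by the definition of a truncation function, $\varphi\in\cd(\ce)$ with $0\leq\varphi\leq 1$, so $\varphi\in\cd(\ce)\cap L^\infty(X,m)$ and $\varphi\geq 0$; moreover $\varphi$ has compact support in $U$ and $\Gamma(\varphi)\leq k^2$ is bounded. Hence Proposition \ref{prop5.1} gives, for every $c\in\R$,
\[
\left(\int_X\varphi^p\,\Gamma(u)^\frac p2\,dm\right)^\frac1p\leq p\left(\int_X\Gamma(\varphi)^\frac p2\,|u-c|^p\,dm\right)^\frac1p .
\]

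Next I would bound the two sides to match the statement. For the left-hand side, since $\varphi=1$ on $\overline D$ and $\varphi^p\geq 0$ everywhere,
\[
\int_D\Gamma(u)^\frac p2\,dm=\int_D\varphi^p\,\Gamma(u)^\frac p2\,dm\leq\int_X\varphi^p\,\Gamma(u)^\frac p2\,dm .
\]
For the right-hand side, I would use the locality of the energy (carr\'e du champ) measure recalled in the Appendix: since $\varphi=0$ $m$-a.e.\ on the open set $X\setminus F$, one has $\Gamma(\varphi)=0$ $m$-a.e.\ there, so that $\Gamma(\varphi)^\frac p2\leq k^p\,1_F$ and
\[
\int_X\Gamma(\varphi)^\frac p2\,|u-c|^p\,dm=\int_F\Gamma(\varphi)^\frac p2\,|u-c|^p\,dm\leq k^p\int_F|u-c|^p\,dm ,
\]
the right-hand side being finite because $F$ is a compact subset of $U$, $m$ is Radon, and $u$ coincides $m$-a.e.\ on $U$ with an element of $\cd_p\subset L^p(X,m)$. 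Chaining the three displays and taking $p$-th roots yields the asserted inequality.

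The only points requiring a little care are the verification that $\varphi$ satisfies all hypotheses of Proposition \ref{prop5.1} — immediate from the definition of a truncation function — and the localization $\Gamma(\varphi)^\frac p2\leq k^p\,1_F$, which rests on the locality of the energy measure. I do not expect a genuine obstacle here: the substantive analytic work, namely the Leibniz/chain-rule manipulation, the passage to truncations $u_n$, and the H\"older estimates, has already been carried out in Proposition \ref{prop5.1}, and Theorem \ref{thm5.2} is essentially its reformulation once a concrete cutoff $\varphi$ with $\varphi=1$ on $\overline D$ is fixed.
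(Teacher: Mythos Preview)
Your proposal is correct and follows essentially the same approach as the paper: apply Proposition~\ref{prop5.1} to the truncation function $\varphi$, use $\varphi=1$ on $\overline D$ to bound the left-hand side from below, and use $\Gamma(\varphi)=0$ on $X\setminus F$ together with $\Gamma(\varphi)\leq k^2$ to bound the right-hand side from above. The paper's proof is just a more compressed version of exactly this chain of inequalities.
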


\begin{proof}
Applying Proposition \ref{prop5.1} to the truncation function $\varphi$ 
we get
$$
\displaystyle\int\limits_D
\Gamma(u)^\frac{p}{2}dm=
\displaystyle\int\limits_D
\varphi^p\Gamma(u)^\frac{p}{2}dm
\leq\displaystyle\int\limits_X
\varphi^p\Gamma(u)^\frac{p}{2}dm\leq
p^p\displaystyle\int\limits_F
\Gamma(\varphi)^\frac{p}{2}\vert u-c\vert^pdm\leq (p\cdot k)^p
\displaystyle\int\limits_{F}
\vert u-c\vert^pdm,
$$
that is the desired equality.
Note that we have used the fact that $\Gamma(\varphi)=0$ on $X\setminus F$.
\end{proof}

Further we use the intrinsic metric $\rho$ on $X$, induced by the Dirichlet form   $(\ce,\cd(\ce))$.
This technique has been developed in the works \cite{BM1}, \cite{BM2}, \cite{St1}, and \cite{St2}.
For each  $x,y\in X$  define the map 
$\rho :X \times X\longrightarrow[0,\infty]$  by
$$
\rho(x,y):= \sup\{u(x)-u(y) :  u\in\cd_{loc}\cap C(X), \; \mu_{\langle u\rangle }\leq m\}.
$$
One can verify that  $\rho(x,y)\leq\rho(x,z)+\rho(z,y)$ 
for each  $x,y,z\in X$.       
$\rho$ is called the {\bf intrinsic metric} induced by  $(\ce,\cd(\ce))$.
We denote by $B_r(x)$ the ball with radius $r>0$ centered in 
$x\in X$,
$$
B_{r}(x):=\lbrace y\in X: \rho(x,y)<r\rbrace,
$$
and let $\tau_{\rho}$ be the topology generated by these balls.

In the sequel we suppose that:\\
{\it the initial topology of locally compact space on $X$ is connected
and  coincides  with \mbox{$\tau_{\rho}.$}}

Note that under the above hypothesis $\rho$ becomes a metric on $X$.

For each  $x\in X$ \c and $r>0$   we  define the functions 
$\rho_{x}, \rho_{x,r}:X\longrightarrow\R_{+}$ by
$$
\rho_{x}(y):=\rho(x,y)\ ,  \quad 
\rho_{x,r}(y):=(r-\rho(x,y))\lor 0 \quad \mbox{ for all } \, y\in X.
$$
Then the functions $\rho_{x}$ and $\rho_{x,r}$ 
have the following properties  (see Lemma 1' in [St1]):
$$
\rho_{x}\in\cd_{loc}\cap C(X),
\quad\rho_{x,r}\in \cd\cap C_{c}(X), 
\mbox{ provided that } B_{r}(x) 
\mbox{ is relatively} \mbox{ compact and}
$$
$$ 
\mu_{\langle \rho_{x}\rangle }\leq m  \ , \quad \mu_{\langle \rho_{x,r}\rangle }\leq m.
$$

\begin{cor} \label{cor5.3}
Let $U$ be open, $x_0\in U$, $0< r< R$
such that the ball $\overline{B}_R(x_0)$ is included in  $U$.
If $u\in\cd_{p}\vert_U$ is $\mbox{\rm\sf L}_p$-harmonic 
on $U$ and  $c\in\R$, then
$$
\left(
\displaystyle\int\limits_{B_r(x_0)}
\Gamma(u)^\frac{p}{2}dm\right)^\frac{1}{p}
\leq\frac{p}{R-r}\cdot\left(
\displaystyle\int\limits_{B_R(x_0)}
\vert u-c\vert^pdm\right)^
\frac{1}{p}.
$$
\end{cor}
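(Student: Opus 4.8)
The plan is to obtain Corollary \ref{cor5.3} as a direct application of Theorem \ref{thm5.2}, with the same open set $U$, with $D=B_r(x_0)$, and with a truncation function built from the intrinsic distance cut-offs recalled above. First I would fix an auxiliary radius $R'$ with $r<R'<R$ and set
$$
\varphi_{R'}:=\Big(\tfrac{1}{R'-r}\,\rho_{x_0,R'}\Big)\wedge 1,
$$
where $\rho_{x_0,R'}(y)=(R'-\rho(x_0,y))\vee 0$. Since $\overline{B_R(x_0)}$ is a compact subset of $U$ (closed balls being compact under the standing assumption that the given locally compact topology coincides with $\tau_\rho$), the ball $B_{R'}(x_0)$ is relatively compact, hence $\rho_{x_0,R'}\in\cd(\ce)\cap C_c(X)$ and $\mu_{\langle\rho_{x_0,R'}\rangle}\leq m$, i.e. $\Gamma(\rho_{x_0,R'})\leq 1$ $m$-a.e.

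Next one checks that $\varphi_{R'}$ is a truncation function for $D$ in $U$ in the sense of the definition preceding Theorem \ref{thm5.2}: it lies in $\cd(\ce)$ (as $\cd(\ce)$ is a vector space and the unit contraction operates on $(\ce,\cd(\ce))$), it satisfies $0\leq\varphi_{R'}\leq 1$, it equals $1$ on $\overline{B_r(x_0)}$ (there $\rho(x_0,\cdot)\leq r$, so $\rho_{x_0,R'}\geq R'-r$), and it has compact support $\pp\varphi_{R'}=\overline{B_{R'}(x_0)}\subset B_R(x_0)\subset U$. Moreover, by bilinearity of $\Gamma$ together with the fact that the unit contraction does not increase the energy,
$$
\Gamma(\varphi_{R'})\leq\Gamma\Big(\tfrac{1}{R'-r}\,\rho_{x_0,R'}\Big)=\tfrac{1}{(R'-r)^2}\,\Gamma(\rho_{x_0,R'})\leq\tfrac{1}{(R'-r)^2},
$$
so one may take $k=(R'-r)^{-1}$ in Theorem \ref{thm5.2}.

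With these verifications in hand, Theorem \ref{thm5.2} applied to $u$, $D=B_r(x_0)$ and $\varphi_{R'}$ gives, for every $c\in\R$,
$$
\Big(\int_{B_r(x_0)}\Gamma(u)^{\frac p2}\,dm\Big)^{\frac1p}\leq\frac{p}{R'-r}\Big(\int_{\overline{B_{R'}(x_0)}}|u-c|^p\,dm\Big)^{\frac1p}\leq\frac{p}{R'-r}\Big(\int_{B_R(x_0)}|u-c|^p\,dm\Big)^{\frac1p},
$$
the last inequality using $\overline{B_{R'}(x_0)}\subset B_R(x_0)$. Finally I would let $R'\uparrow R$: the left-hand side is independent of $R'$ and $\tfrac{p}{R'-r}\to\tfrac{p}{R-r}$, which yields the asserted inequality. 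I expect the only genuinely nontrivial ingredients to be the properties of the intrinsic cut-off $\varphi_{R'}$ — chiefly the energy bound $\Gamma(\rho_{x_0,R'})\leq 1$, which is exactly the defining property of the intrinsic metric recalled above, and the relative compactness of the balls $B_{R'}(x_0)$ — everything else being routine; the use of the auxiliary radius $R'$ and the passage to the limit is precisely what allows one to reach the sharp constant $\tfrac{p}{R-r}$ with the open ball $B_R(x_0)$ on the right-hand side.
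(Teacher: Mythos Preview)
Your proof is correct and takes essentially the same route as the paper: the paper builds the truncation function $\varphi=\frac{1}{R-r}\bigl(\rho_{x_0,R}\wedge(R-r)\bigr)$ directly at radius $R$ (which coincides with your $\varphi_{R'}$ for $R'=R$) and applies Theorem~\ref{thm5.2} with $k=\frac{1}{R-r}$, without any auxiliary radius or limiting step. Your detour through $R'<R$ and $R'\uparrow R$ is unnecessary but harmless, and has the small cosmetic advantage of landing on the open ball $B_R(x_0)$ rather than $\overline{B}_R(x_0)$ on the right-hand side.
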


\begin{proof}
We define the function $\psi$ by
$$
\psi(y)=(R-\rho(x_0,y))_+\land(R-r)=\rho_{x_0,R}(y) \land (R-r) .
$$
It results that $\psi\in\cd(\ce)$ and $\Gamma(\psi)\leq 1$.
Defining $\varphi:=\frac{1}{R-r}\psi$
we have obtained a function possessing the following properties:
$
\varphi\in\cd(\ce)\cap L^\infty(X,m),$ 
$\Gamma(\varphi)\leq\frac{1}{(R-r)^2}$,
$0\leq\varphi\leq 1$,
$\varphi=1$ pe $\overline{B}_r(x_0)$ \c si $\varphi=0$ pe 
$X\setminus B_R(x_0)$.
We conclude that $\varphi$ is a truncation function for
 $B_r(x_0)$ in $U$ and  $\pp\varphi=\overline{B}_R(x_0)$.
Applying Theorem \ref{thm5.2} we obtain the required inequality. 
\end{proof}

\noindent
{\bf Final remarks.}
1. Proposition  \ref{prop5.1} may be applied to the Dirichlet form of the uniformly elliptic case. 
Taking into account the estimates for the carr\'{e} du champ operator in this case,
$\alpha\vert\nabla u\vert^2\leq
\frac{1}{2} \Gamma(u)\leq\beta\vert\nabla u\vert^2,$
we obtain
\begin{equation} \label{5.1}  
\left(
\displaystyle\int\limits_\Omega
\varphi^p(x)\vert\nabla u(x)\vert^pdx\right)^\frac{1}{p}
\leq
p \sqrt{ \frac{\beta}{\alpha} } 
\left(\displaystyle\int\limits_\Omega
\vert\nabla\varphi (x)\vert^p
\vert u(x)-c\vert^p\right)^\frac{1}{p}.
\end{equation}
This is a Caccioppoli type inequality precisely as in 
\cite{Ri} or
\cite{BI},
Proposition 6.1
(including the values of the constants).

2.  Corollary \ref{cor5.3}  gives a Caccioppoli type inequality as in \cite{BI},
however for balls in the intrinsic metric.

{3.}  Return to the Dirichlet form of the uniformly elliptic case and observe that the Euclidean ball of radius $r$ admits a truncation function $\varphi$ 
with $\vert\nabla\varphi\vert\leq\frac{1}{r}$ and 
$\pp\varphi$ included in the Euclidean ball of radius $2r$.
In addition, if $u\in W_{loc}^{1,n}(\Omega)$ then there exists an open set 
$G$ including $\pp\varphi$ and such that 
$u\in\cd_{p}\vert_G$; 
cf. \cite{AH}.
Applying now $(\ref{5.1})$ we deduce a Caccioppoli type inequality for Euclidean balls,  as in 
\cite{BI}, 
Corollary 6.1:
$$
\left(\displaystyle\int\limits_{\vert x_0-x\vert< r}
\vert\nabla u(x)\vert^pdx\right)^{\frac{1}{p}}
\leq
\frac{p}{r} \sqrt{ \frac{\beta}{\alpha} }
\left( \displaystyle\int\limits_{\vert x_0-x\vert< 2r}
\vert u(x)-c\vert^pdx \right)^\frac{1}{p}.
$$

4.  Let now $\Omega\subset\R^p$ be a bounded domain and 
$f:\Omega\longrightarrow \R^p$  a quasiregular mapping.
Since we observed that the component of $f$ as well as $\ln |f|$ 
(provided that $f:\Omega\longrightarrow\R^p\setminus\{0\}$)
are $L_p$-harmonic functions, we can apply  Proposition \ref{prop5.1}, 
Theorem \ref{thm5.2}, and Corollary \ref{cor5.3}, to obtain Caccioppoli type inequalities for the quasiregular mapping $f$.



\section{Appendix}

\noindent
{\bf A1. Dirichlet forms.}
Let $(X,\cb)$ be a measurable space and  $m$ a fixed  positive 
$\sigma$-finite measure on this space.

Let $(\ce,\cd(\ce))$ be a closed form on  $L^{2}(X,m)$.
Then there exists a unique selfadjoint operator 
$\mbox{\sf L}:\cd(\mbox{\sf L})\longrightarrow L^{2}(X,m)$,
such that
$\cd(\mbox{\sf L})\subset \cd(\ce)$ 
is dense in the norm  $\ce_{1}^{\frac{1}{2}}$, $-\mbox{\sf L}$
positive definite and
$$
\ce(u,v)=-(\mbox{\sf L}u,v)_{2}\quad
\mbox{ for all }\, u\in\cd(\mbox{\sf L})\mbox{ and }v\in\cd(\ce)
$$
(see e.g. \cite{Fu}).
The operator  $\mbox{\sf L}$ is called 
{\bf the generator of the form}  $(\ce , \cd ( \ce ) )$.
A normal contraction $T$ is said to {\bf operate } on $\vre$ 
provided that if 
$u\in\cd(\vre)$ then  $T(u)\in\cd(\vre)\mbox{ \c and }\vre(T(u),
T(u))\leq\vre(u,u).$

Let $\cd=\cd(\vre)$ be a dense linear subspace of $L^{2}(X, m)$.
A symmetric bilinear map 
$\vre:\cd(\vre)\times\cd(\vre)\longrightarrow\R$
is called {\bf closed form} on  $L^{2}(X,m)$ provided it is positive
(i.e. $\vre(u,u)\geq 0,$ for each $u\in\cd (\vre )$)
and $\cd(\vre )$ endowed with the scalar  product
$\vre_{1}(u,v):=\vre(u,v)+(u,v)_{2}, \quad u,v\in\cd(\vre)$
is a Hilbert   space. 
We have denoted by  $(\, ,\,)_{2}$ the scalar product from
$L^{2}(X, m):$ 
$(u,v)_{2}:= \int_{X}uvdm$.

A {\bf normal contraction} is a  function
$T:\R\rightarrow\R$ such that $T(0)=0$ and
$
\mid T(x)-T(y)\mid\leq\mid x-y\mid\quad \mbox{ for all }\, x,y\in\R.
$
An  example of normal contraction is the {\bf  unit contraction}
$T_{1}:\R\rightarrow\R$, defined by
$
T_{1}(x) := (x\vee 0)\wedge1.
$

A closed form  $\vre$ on  $L^{2}(X,m)$, having the domain 
$\cd(\vre)$ (we write
$(\vre,\cd(\vre)$), is called  {\bf Dirichlet  form} if
the unit contraction operates  on  $\vre$, that is:
$$
u\in\cd(\vre)\Longrightarrow(u\lor 0)\wedge 1 \in\cd(\vre)\mbox{ and }
\vre((u\lor0)\wedge1,(u\lor0)\land1)\le\vre(u,u), 
$$
where we have denoted by $\lor$, $\wedge$  the lattice  operations in 
$L^{2}(X, m)$.

Further we suppose that $X$ is a locally compact separable Hausdorff topological space,
$\cb$ is the  $\sigma$-algebra  of Borel measurable subsets of $X$, and
$m$ is a positive Radon measure, 
having as support the whole space $X$.
We denote by  $C(X)$ (respective $C_{c}(X)$) 
the set of all continuous (respective continuous with compact support)
real valued functions on $X$.

Let  $(\vre,\cd(\vre))$ be a Dirichlet  form.
Then $\vre$ has the following properties:

\noindent
$1)$ Each normal contraction operates on  $\vre$. 

\noindent
$2)$ If $u,v\in\cd(\vre)$ then 
$u\wedge v,u\vee v,u\wedge 1\in\cd(\vre)$. 

\noindent
$3)$ If $u,v\in\cd(\vre)\cap L^{\infty}(X,m)$ then 
$u\cdot v\in\cd(\vre)$ \c and 
$\vre(u\cdot v)^{\frac{1}{2}}\le
   \| u \|_{\infty}\vre(v)^{\frac{1}{2}}$
+ $\| v \|_{\infty}\vre(u)^{\frac{1}{2}}$
where $\|\,\cdot \|_{\infty}$ is the norm in $L^{\infty}(X,m)$ and 
$\vre (u)= \vre (u,u)$. 

\noindent
$4)$ If  $u\in\cd(\vre)$ and $u_{n}:= ( (-n)\lor u)\land n$, $n\in\N$, 
then $u_{n}\in\cd(\vre)$ \c and $u_{n}\longrightarrow u$
(when $n\rightarrow\infty$)
in the norm $\vre_{1}^{\frac{1}{2}}$.

Recall that the Dirichlet form  $(\vre,\cd(\vre))$  on  $L^{2}(X,m)$ is termed
{\bf regular} if the set
$\cd(\vre)\cap C_{c}(X)$ is  
dense in  $C_{c}(X)$ in the uniform norm and 
dense in  $\cd(\vre)$  in the norm  $\vre_{1}^{\frac{1}{2}}$
(induced by the scalar product $\vre_{1}$ of the Hilbert space $\cd(\vre)$);
see e.g. \cite{Fu}.

{\it In the sequel $(\ce,\cd(\ce))$ will be a regular
Dirichlet form on $L^{2}(X,m)$.}

\noindent
{\bf The capacity, quasi-continuity.}  
For each open set $G\subset X$ define
$\mbox{\sf cap}(G):=\inf 
\{ \ce_{1}(u,u)/ \; u\in\cd(\ce),\, u\geq 1\quad m \mbox{-a.e.  on  } G \}$
with the convention $\mbox{\sf cap}(G)=\infty$, 
whenever there is no $u\in\cd(\ce)$  with $u\geq 1$ $m$-a.e. on $G$.
For an arbitrary set $A\subset X$ we put
$\mbox{\sf cap}(A)=
\inf\{\mbox{\sf cap}(G)/\; G \mbox{ open},\, A\subset G\}.
$
In this way we obtained a {\bf Choquet capacity} on $X$.
A function $u:X\longrightarrow \R$ is called 
$\ce$-{\bf quasi-continuous}
if for each $\delta> 0$  there exists an open set  $G$  such that
$\mbox{\sf cap}(G)< \delta$ and $u|_{X\setminus G}$ is continuous.
Let $A\subset X$.
A property depending on $x\in A$  holds  
{\bf quasi everywhere on} (q.e. on) $A$
if there exists a set $N$ 
such that $\mbox{\sf cap}(N)=0$ and the property is true for
each $x\in A\setminus N$.
Since $(\ce,\cd(\ce))$ is regular then: 
{\it for each  $u\in\cd(\ce)$ there exist a  $m$-version $\tilde u$ of $u$
(i.e. $u=\tilde u$ $m$-a.e. on $X$) 
that is $\ce$-quasi-continuous}. 
Each other $\ce$-quasi-continuous $m$-version of $u$ coincides  q.e. 
with $\tilde u$.   
(see e.g. Theorem 3.1.3 \c and Lemma 3.1.4 in \cite{Fu}).

\noindent
{\bf The energy measure}. We present some basic facts about
the energy measure 
following the works \cite{LJ}, \cite{BM2}, \cite{Mo} and especially 
 \cite{FOT}, \cite{BH2} and \cite{St1}.
For each $f,u\in\cd(\ce)\bigcap L^{\infty}(X,m)$ 
the following inequality holds  (see $(3.2.13)$ from  \cite{FOT}):
$$
2\ce(u\cdot f,u)-\ce(u^{2},f)
\leq 2\|f\|_{\infty}\cdot\ce(u,u).
$$
In addition,  if 
$
f\geq 0$ then 
$0
\leq 2\ce(u\cdot f,u)-\ce(u^{2},f).$
Consequently, for each 
$u\in\cd(\ce)\cap L^{\infty}(X,m)$ 
there exists a uniquely determined positive Radon measure
$\mu_{\langle u \rangle }$ on  $X$ such that
$$
\int_{X}fd\mu_{\langle u \rangle }=2\ce(u\cdot f,u)-\ce(u^{2},f)\quad
\mbox{ for all } \, f\in\cd(\ce)\cap C_{c}(X).
$$
We have 
$\mu_{\langle u \rangle }(X)\leq 2\ce(u,u)<\infty$
and therefore $\mu_{\langle u \rangle }$ is a finite measure.
The measure  $\mu_{\langle u \rangle }$ is called  {\bf the energy measure} of
$u\in\cd(\ce)\cap L^{\infty}(X,m)$.
\noindent
If  $u,v\in\cd(\ce)\cap L^{\infty}(X,m)0$ then we define 
$$
\mu_{\langle u,v\rangle }:=\frac{1}{2}( \mu_{\langle u+v\rangle }-\mu_{\langle u\rangle }-\mu_{\langle v\rangle} ).
$$
Note  that $\mu_{\langle u,v\rangle }$ is the unique signed measure on $X$ such that 
$$
\int_{X}fd\mu_{\langle u,v\rangle }=
\ce(uf,v)+\ce(vf,u)-\ce(uv,f)\quad
\mbox{ for all }  \, f\in\cd(\ce)\cap C_{c}(X).
$$
Let $u\in\cd(\ce)$ \c si $(u_{n})_{n}\subset\cd(\ce)\cap L^{\infty}(X,m)$ 
a sequence converging to  $u$ in  the  norm $\ce_{1}^{\frac{1}{2}}$.
We define the {energy measure} $\mu_{\langle u \rangle}$ {of} $u$ by
$\mu_{\langle u\rangle }(f):=\lim_{n\rightarrow\infty}\mu_{\langle u_n \rangle } (f)$ for all 
$f\in C_{c}(X).$
 One can see that the positive Radon measure
$\mu_{\langle u \rangle }$ is well defined and
$\mu_{\langle u \rangle }(X)\leq 2\ce(u,u).$ 
For each $u\in\cd(\vre)$, the energy  measure $\mu_{\langle u\rangle }$ charges no set
of capacity zero (cf. Lemma 3.2.4 din \cite{FOT}).

{
(The Cauchy-Schwarz inequality; cf. \cite{St1}.)
Let $u,v\in\cd(\ce)$ \c  and $f , g$
be two  bounded $\cb$-measurable functions.
Then
$
\int_{X} f\cdot gd\mu_{\langle u,v\rangle }\leq\left(
\int_{X} f^{2}d\mu_{\langle u \rangle}\right)^
{\frac{1}{2}}
\cdot\left(
\int_{X} g^{2}d\mu_{\langle v \rangle }\right)^{\frac{1}{2}}\leq
$
$
\frac{1}{2}
\left(\int_{X}f^{2}d\mu_{\langle u\rangle }+
\int_{X} g^{2}d\mu_{\langle v \rangle }\right).
$}

The following assertions are equivalent: 

$i)$ The form $(\ce,\cd(\ce))$ is strongly local. 

$ii)$ $1_{G}\cdot d\mu_{\langle u \rangle }=0$ for each $u\in\cd$ \c  and 
relatively compact open set $G$, $u$ constant $m$-a.e. on $G$. 

$iii)$ For each $u,v\in\cd\cap L^{\infty}(X,m)$ \c and $w\in\cd$ we have
 $\mu_{\langle u\cdot v,w \rangle }=\tilde u\cdot\mu_{\langle v,w \rangle }+\tilde v\cdot\mu_{\langle u,w\rangle }$ (the Leibniz rule).

Assume that $(\ce,\cd(\ce))$ is strongly local and
let $\varphi\in C^{1}(\R^{n})$ and $u_{1},...,u_{n}\in\cd_{b,loc}$ , 
$\underline{u}:=(u_{1},...,u_{n})$.
Then the following equality (called {\it  the chain rule}) holds
(cf. Theorem 3.2.2 in \cite{FOT}):
$
\mu_{\langle \varphi(\underline{u}),v \rangle }=
\sum_{i=1}^{n}\frac{\partial\varphi}{\partial x_{i}}(
\underline{\tilde u})\cdot\mu_{ \langle u_{i},v \rangle }\quad \mbox{ for all }\, v\in\cd_{b,loc}.
$
If in addition the partial derivatives 
$\frac{\partial\varphi}{\partial x_{i}}$ 
are uniformly bounded, then above equality holds for all
$u_{1},...,u_{n},v\in\cd_{loc}$.
If  $u,v,w\in\cd_{loc}$ then the following {\bf truncation formula} holds  (cf. \cite{BM2}, \cite{Mo} and \cite{St1}):

\vspace{-4mm}

\begin{equation} \label{6.1}
\mu_{\langle u\land v,w \rangle }=1_{[\tilde u<\tilde v]}\cdot
\mu_{\langle u,w \rangle }+1_{[\tilde u\geq\tilde v]}\cdot\mu_{\langle v,w \rangle },\quad
\mu_{\langle u\land v,u\land v \rangle }=
1_{[\tilde u<\tilde v]}\cdot\mu_{\langle u,u\rangle}+
1_{[\tilde u\geq\tilde v]}\cdot\mu_{\langle v,v \rangle}.
\end{equation}
In addition, 
\begin{equation} \label{7.2}
\mbox{if } 
A\in\cb  \mbox{ and }
u=v  \mbox{  q.e. on  } A \mbox{  then }
1_{A}\cdot\mu_{\langle u,w\rangle }=1_{A}\cdot\mu_{\langle v,w \rangle }.
\end{equation}

A Dirichlet form $(\vre,\cd(\vre))$ on $L^{2}(X,m)$ is called
{\bf local} if for each  $u,v\in\cd(\vre)$  with compact supports 
$\mbox{\sf supp}u$, $\mbox{\sf supp}v$ we have
$ \vre(u,v)=0$ provided that 
$\mbox{\sf supp}u\cap\mbox{\sf supp}v = \emptyset$,  
where  if $u\in L^{2}(X,m)$ we  denoted  by $\mbox{\sf supp} u$
the support of the measure $u^{2}\cdot m$.

The form $(\ce,\cd(\ce))$ is termed {\bf strongly local}  if for each 
$u,v\in\cd(\ce)$  with  $\pp u$, $\pp v$ compact and 
$v$ constant on a neighbourhood of  $\pp u$ we have  $\ce(u,v)=0$.

{\it Further we  suppose that  $(\ce,\cd(\ce))$ 
is a strongly local regular Dirichlet form.}

The following equality holds:
$$
\ce(u,v)=\frac{1}{2} \int_{X}d\mu_{<u,v>}
\quad \mbox{ for all } \, u,v\in\cd(\ce).
$$

Define the {\bf local  domain} of the form by 
$\cd_{loc}$:=
$\lbrace u:X\rightarrow\R: $ for each relatively compact open set
$G$ there exists $w\in\cd(\ce)$ with  $u=w,$  $m$-a.e. on  $G\rbrace .$
If  $u\in\cd_{loc}$ we define  the {energy measure}   
$\mu_{<u>}$  of  $u$  by
$$
\mu_{<u>}:=\mu_{<u_n>}\mbox{ on } G_{n}, 
$$
where  $(G_{n})_{n}$ is an exhaustion of the space with 
relatively compact open sets,
$\bar G_{n}\subset G_{n+1}$, for each $n$,
and $u_{n}\in\cd$, $u_{n}=u$ $m$-a.e. on  $G_{n}$.
One can see that  the 
$\sigma$-finite measure  $\mu_{<u>}$ is well defined.
Denote by $\cd_{b,loc}$ the subspace of  $\cd_{loc}$ 
of all locally bounded functions.

The form $(\ce,\cd(\ce))$ admits a {\bf carr\'{e} du champ operator}  provided that
there exists a subspace $H\subset\cd(\ce)\cap L^{\infty}(X,m)$
dense in  $\cd(\ce)$ such that for each $u\in H$ 
there exists  $\overline{u}\in L^{1}(X,m)$  with
$$
2\ce(uf,u)-\ce(u^{2},f)=
\int_{X} f\overline{u}dm
\quad \mbox{ for all } \, f\in\cd(\ce)\cap L^{\infty}(X,m); 
$$
cf. \cite{BH2}.
By Proposition 4.1.3 in  \cite{BH2} we deduce that:
\noindent
$(\ce,\cd(\ce))$ admits a carr\'{e} du champ 
if and only if
for each $u,v\in\cd(\ce)$, the energy measure
$\mu_{<u,v>}$ is absolutely continuous with respect to $m$.
Let $\Gamma(u,v)$ denote its appropriate  Radon-Nikodym  density,
$
\mu_{<u,v>}=\Gamma(u,v)\cdot m .
$
We define in this way a positive symmetric bilinear form
$$
\Gamma:\cd(\ce)\times\cd(\ce)\longrightarrow L^{1}(X,m)
$$
such that
$
\ce(u\cdot f,v)+\ce(v\cdot f,u)-\ce(u\cdot v,f)=
\int_{X} f\Gamma(u,v)dm\quad
\mbox{ for all } \, u,v,f\in\cd\cap L^{\infty}(X,m).
$
Whenever $(\ce,\cd(\ce))$ admits a carr\'{e} du champ operator, 
the  form $\Gamma$ is called the {\bf carr\'{e} du champ operator} 
associated with  $\ce$.

Assume that  $(\ce,\cd(\ce))$  is a strongly local 
Dirichlet form admitting a
carr\'{e} du champ operator $\Gamma$. 
We say that 
$(\ce,\cd(\ce))$ 
{\bf admits a gradient $D$} (cf.  Definition 5.2.1 in \cite{BH2})
provided that there exists a separable Hilbert space $H$, a vector subspace $\cd_0$ dense 
in $\cd(\ce)$ and a linear map $D$ from $\cd_0$ into $L^2(m; H)$
such that
\begin{equation}\label{gradient} 
\Gamma(u)= \| Du \|^2_H \ \mbox{ for all } u\in \cd_0.
\end{equation}

The next proposition collects the basic properties of the carr\'e du champ operator.

\begin{prop} \label{prop6.1} 
Let $(\ce,\cd(\ce))$  be a strongly local 
Dirichlet form admitting a
carr\'{e} du champ operator $\Gamma$. .
Then the carr\'{e} du champ  operator  $\Gamma$ 
has the following properties.

1) We have
$$ 
\ce(u,v)=\frac{1}{2}
\int_{X}\Gamma(u,v)dm\quad
\mbox{ for all }\, u,v\in\cd(\ce); 
$$ 
see Proposition  6.1.1 in \cite{BH2}.

2) If ll  $\varphi\in C^{1}(\R^{n})$ with $\varphi(0)=0$  then
$$ 
\Gamma(\varphi(\underline{u}), v)=
\sum_{i=1}^{n}\frac{\partial\varphi}{\partial x_{i}}(
\underline{\tilde u})\Gamma(u_{i},v)\quad \mbox{ for all }\, u_{1},...,u_{n},v\in
\cd\cap L^{\infty}(X,m); 
$$ 
cf. Corollary 6.1.3 in \cite{BH2}.

3) For all $ \, u,v, w \in\cd(\ce)$ we have
\begin{equation} \label{6.2}
\Gamma(u\land v,w)=1_{[\tilde u<\tilde v]}\cdot
\Gamma(u,w)+1_{[\tilde u\geq\tilde v]}\cdot\Gamma(v,w), 
\end{equation}
\begin{equation} \label{6.3}
\vert\Gamma(u,v)\vert\leq
\Gamma(u)^{\frac{1}{2}} \Gamma(v)^{\frac{1}{2}},
\end{equation}
\begin{equation} \label{6.4}
\Gamma(u\wedge v)^p + \Gamma(u\vee v)^p = \Gamma(u)^p + \Gamma(v)^p ,
\end{equation}
where $\Gamma(u):=\Gamma(u,u)$, $p>0$, and
\begin{equation} \label{6.5}
\Gamma(u+v)
^{\frac{1}{2}}\leq\Gamma(u)^{\frac{1}{2}}+\Gamma(v)^{\frac{1}{2}}.
\end{equation}

4) 
If $A\subset X$, $A\in\cb$ \c and $u=v$ q.e. on $A$, then
$\Gamma(u,w)=\Gamma(v,w)\ m \mbox{-a.e. on }A \ 
\mbox{ for all } \, w\in\cd(\ce) .$
Particularly, if  $u\in\cd(\ce)$ is constant  $m$-a.e. on an open set 
$G$  then
$
\Gamma(u)=0\quad
m \mbox{-a.e. on }G.
$

5) 
 The Dirichlet form $(\ce,\cd(\ce))$ 
{admits a gradient $D$}. 
The operator $(\cd_0, D)$ is closable as an operator from $L^2(m)$ in $L^2 (m; H)$, its closure still denoted by $D$ has $\cd(\ce)$ as its domain and it is a continuous operator from 
$\cd(\ce)$ into $L^2 (m; H)$.
\end{prop}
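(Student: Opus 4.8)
The plan is to reduce each assertion to the corresponding property of the energy measure recorded in Appendix \textbf{A1}, using that under the carr\'e du champ hypothesis $\mu_{\langle u,v\rangle }=\Gamma(u,v)\cdot m$, so that every relation below is obtained by ``dividing by $m$''. Assertion $1)$ is then immediate from $\ce(u,v)=\frac12\int_X d\mu_{\langle u,v\rangle }$ together with $\mu_{\langle u,v\rangle }=\Gamma(u,v)m$ (it is also Proposition 6.1.1 in \cite{BH2}), and assertion $2)$ is the chain rule for the energy measure (Theorem 3.2.2 in \cite{FOT}, Corollary 6.1.3 in \cite{BH2}) divided by $m$; the normalisation $\varphi(0)=0$ serves only to guarantee that $\varphi(\underline u)\in\cd(\ce)$ when $u_1,\dots ,u_n\in\cd(\ce)\cap L^\infty(X,m)$.

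For $3)$, the identity $(\ref{6.2})$ is the truncation formula $(\ref{6.1})$ divided by $m$; applying $(\ref{6.2})$ both to $u,v$ and to $-u,-v$, and using $\Gamma(-w)=\Gamma(w)$ and $u\vee v=-((-u)\wedge(-v))$, one gets that $m$-a.e. the unordered pair $\{\Gamma(u\wedge v),\Gamma(u\vee v)\}$ equals $\{\Gamma(u),\Gamma(v)\}$ on $[\tilde u<\tilde v]\cup[\tilde u>\tilde v]$, while on $[\tilde u=\tilde v]$ all four densities agree $m$-a.e. by assertion $4)$ (since $\tilde u=\tilde v$ there); hence any symmetric function of the pair is unchanged, which gives $(\ref{6.4})$ for every $p>0$. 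For $(\ref{6.3})$ I would use that $\Gamma(w)\ge 0$ $m$-a.e. for every $w\in\cd(\ce)$, it being a Radon--Nikodym density of the positive measure $\mu_{\langle w\rangle }$: applied to $w=u+tv$ for each $t\in\Q$ and after intersecting the countably many exceptional sets, this gives $\Gamma(u)+2t\Gamma(u,v)+t^2\Gamma(v)\ge 0$ $m$-a.e. for all $t\in\Q$, hence, by continuity in $t$, for all $t\in\R$, so the discriminant of this quadratic is $m$-a.e. nonpositive, i.e. $(\ref{6.3})$ holds; then $(\ref{6.5})$ follows by bilinearity, $\Gamma(u+v)=\Gamma(u)+2\Gamma(u,v)+\Gamma(v)\le\left(\Gamma(u)^{1/2}+\Gamma(v)^{1/2}\right)^2$ $m$-a.e., taking square roots. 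For $4)$, if $u=v$ q.e. on $A\in\cb$ then $(\ref{7.2})$ gives $1_A\cdot\mu_{\langle u,w\rangle }=1_A\cdot\mu_{\langle v,w\rangle }$, i.e. $\Gamma(u,w)=\Gamma(v,w)$ $m$-a.e. on $A$; and if $u\in\cd(\ce)$ is constant $m$-a.e. on an open set $G$, one exhausts $G$ by relatively compact open sets and uses the strong-locality characterisation $ii)$ from Appendix \textbf{A1} (that $1_G\,d\mu_{\langle u\rangle }=0$) to conclude $\Gamma(u)=0$ $m$-a.e. on $G$.

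For $5)$, I would invoke the construction of a gradient associated with a carr\'e du champ operator from Chapter V of \cite{BH2}, which supplies a separable Hilbert space $H$, a dense subspace $\cd_0\subset\cd(\ce)$ and a linear $D_0:\cd_0\to L^2(m;H)$ with $\Gamma(u)=\|D_0u\|_H^2$ on $\cd_0$; then it remains to prove closability and to identify the closure. The key estimate is $\|D_0u\|_{L^2(m;H)}^2=\int_X\Gamma(u)\,dm=2\ce(u,u)\le 2\ce_1(u,u)$ for $u\in\cd_0$, so $D_0$ is continuous from $(\cd_0,\ce_1^{1/2})$ into $L^2(m;H)$ and, by density of $\cd_0$ in the Hilbert space $(\cd(\ce),\ce_1^{1/2})$, extends uniquely to a continuous linear $D:\cd(\ce)\to L^2(m;H)$, the identity $\Gamma(u)=\|Du\|_H^2$ persisting for all $u\in\cd(\ce)$ since both sides are continuous from $\cd(\ce)$ into $L^1(X,m)$. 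For closability from $L^2(X,m)$ into $L^2(m;H)$: if $u_n\in\cd_0$, $u_n\to 0$ in $L^2(X,m)$ and $(D_0u_n)_n$ converges in $L^2(m;H)$, then $\ce(u_n-u_k,u_n-u_k)=\frac12\|D_0u_n-D_0u_k\|_{L^2(m;H)}^2\to 0$, so $(u_n)_n$ is $\ce_1^{1/2}$-Cauchy, and completeness of $\cd(\ce)$ together with $u_n\to 0$ in $L^2(X,m)$ forces $u_n\to 0$ in $\ce_1^{1/2}$, whence $\|D_0u_n\|_{L^2(m;H)}^2=2\ce(u_n,u_n)\to 0$ and the limit is $0$; the same Cauchy argument shows that the domain of the closed extension is exactly $\cd(\ce)$. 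I expect the only genuinely non-routine ingredient to be this construction of $H$ and $D_0$, which I would cite rather than reprove; everything else is a transcription, through $\mu_{\langle u,v\rangle }=\Gamma(u,v)m$, of facts already collected in the Appendix, plus the elementary pointwise Cauchy--Schwarz argument for $(\ref{6.3})$.
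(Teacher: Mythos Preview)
Your proposal is correct and follows essentially the same route as the paper: most assertions are reduced to the corresponding energy-measure identities in Appendix~\textbf{A1} via $\mu_{\langle u,v\rangle}=\Gamma(u,v)\cdot m$, and $(\ref{6.4})$ is obtained from $(\ref{6.2})$ applied to $(u,v)$ and to $(-u,-v)$, exactly as the paper does.

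Two minor remarks. First, for $(\ref{6.4})$ the paper avoids the separate treatment of $[\tilde u=\tilde v]$: since $(\ref{6.2})$ already gives the partition $[\tilde u<\tilde v]\cup[\tilde u\ge\tilde v]$, one has directly $\Gamma(u\wedge v)^p=1_{[\tilde u<\tilde v]}\Gamma(u)^p+1_{[\tilde u\ge\tilde v]}\Gamma(v)^p$ and $\Gamma(u\vee v)^p=1_{[\tilde u<\tilde v]}\Gamma(v)^p+1_{[\tilde u\ge\tilde v]}\Gamma(u)^p$, and these sum to $\Gamma(u)^p+\Gamma(v)^p$ without appealing to assertion~$4)$. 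Second, for assertion~$5)$ the paper singles out one point you pass over: before invoking the Bouleau--Hirsch construction (Proposition~5.2.2~$a)$ and Exercise~5.9 in \cite{BH2}, the latter being Mokobodzki's result), one must know that $\cd(\ce)$ is \emph{separable} in the $\ce_1^{1/2}$-norm; the paper verifies this via the resolvent, showing that $U_\alpha(\ca)$ is $\ce_1^{1/2}$-dense for any countable dense $\ca\subset L^2(X,m)$. Your closability argument is fine once the gradient exists, but the existence step itself rests on this separability, so it is worth recording.
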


Further we present several arguments for the proof of Proposition \ref{prop6.1}.

\noindent
{\it Proof of $(\ref{6.4})$}. By  $(\ref{6.2})$ we have 
$\Gamma(u\land v)=1_{[\tilde u<\tilde v]}\cdot
\Gamma(u)+1_{[\tilde u\geq\tilde v]}\cdot\Gamma(v)$. 
We also have 
$\Gamma(u\vee v)= 
\Gamma((-u)\wedge (-v))= 1_{[\tilde u<\tilde v]}\cdot
\Gamma(v)+1_{[\tilde u\geq\tilde v]}\cdot\Gamma(u)$. 
Therefore
$\Gamma(u\land v)^p=1_{[\tilde u<\tilde v]}\cdot
\Gamma(u)^p+1_{[\tilde u\geq\tilde v]}\cdot\Gamma(v)^p$
and 
$\Gamma(u\vee v)^p=1_{[\tilde u<\tilde v]}\cdot
\Gamma(v)^p+1_{[\tilde u\geq\tilde v]}\cdot\Gamma(u)^p$.
The relation $(\ref{6.4})$ follows now by adding the last two equalities.\\[1mm]
\noindent
{\it Proof of assertion $5)$  of Proposition \ref{prop6.1}.}  
Note first that the domain $\cd(\ce)$ of  the form is separable in the  norm $\ce_1^{\frac 1 2}$.
Indeed, let $\ca$ be a countable dense subset pf $L^2(X,m)$, fix $\alpha >0$ and let $U_\alpha$ be the $\alpha$-level operator of the resolvent family of the form 
$(\ce,\cd(\ce))$.
Then $\cd (L)= U_\alpha (L^2(X, m)))$, one can see that $U_\alpha (\ca)$ is dense in $\cd(L)$ in the graph norm,  
and since $\cd(L)$ is dense in $\cd(\ce)$ in the norm $\ce_1^{\frac 1 2}$  
we conclude that $U_\alpha (\ca)$ is also dense in $\cd(\ce)$ in the norm $\ce_1^{\frac 1 2}$,  as claimed.
Assertion $5)$ follows now by Proposition 5.2.2 $a)$ combined with Exercise 5.9 
(which is a result of G. Mokobodzki) from \cite{BH2}; alternatively,  see Theorem 3.9 from \cite{E}.\\

\noindent
{\bf A2. Proof of assertion $1)$ of Proposition \ref{prop3.2}.}
Let  $(u_n)_n\subset\cd_p$ be a Cauchy  sequence in the norm  $\|\cdot \|_{\cd_p}$.
It follows that  $(u_n)_n$ is a   Cauchy sequence  in 
$L^p(X,m)$ and $L^2(X,m)$.
In addition we have
\begin{equation}\label{6.6}
\lim_{n,m\rightarrow\infty}\int_{X}\Gamma(u_n-u_m)
^{\frac{p}{2}}dm=0
\end{equation}
and so
$
\lim_{n,m\rightarrow\infty}\int_{X}\Gamma(u_n-u_m)dm= 0. 
$
Recall that from assertion $1)$ of Proposition \ref{6.1} we have
$\ce(u)=\frac{1}{2}\int_{X}\Gamma(u)dm
$
and therefore
$\lim_{n,m\rightarrow\infty}\ce(u_n-u_m)=0.$
We get that $(u_n)_n$ is a  Cauchy sequence   in the Banach space
$(\cd(\ce),\ce_1^\frac{1}{2})$ and let $u\in\cd(\ce)$ be such that
$u_n\longrightarrow u$ in $\cd(\ce)$.
We  show  that $u_n\longrightarrow u$ 
also in the  norm $\| \cdot \|_{\cd_p}$,
that is $\lim_{n\rightarrow\infty}\| u_n-u\|_p=0$ and
$\lim_{n\rightarrow \infty} \|\Gamma(u_n-u)^\frac{1}{2}\|_p=0$.
The sequence $(u_n)_n$ being  Cauchy in the norm $\|\cdot \|_p$ and
convergent
in $\|\cdot \|_2$ to $u$, we deduce that $u_n\longrightarrow u$ in
$L^p(X,m)$.
From $(\ref{6.5})$ and $(\ref{6.6})$ we  obtain
$\lim_{n,m\rightarrow\infty}\|\Gamma(u_n-u)^\frac{1}{2}-
\Gamma(u_m-u)^\frac{1}{2}\|_p\leq\lim_{n,m\rightarrow\infty}\|
\Gamma(u_n-u_m)^\frac{1}{2}\|_p=0
$
and therefore the sequence
$(\Gamma(u_n-u)^\frac{1}{2})_n$ is Cauchy in $L^p(X,m)$.
In addition, since  $u_n\longrightarrow u$ in $\ce_1^\frac{1}{2}$
we derive that
$\Gamma(u_n-u)^\frac{1}{2}\longrightarrow 0$
in $L^2(X,m)$.
It results that  $\Gamma(u_n-u)^\frac{1}{2}\longrightarrow 0$ in 
$L^p(X,m)$
and we conclude that $(\cd_p,\|\cdot \|_{\cd_p})$ is a
Banach space. \hspace{\fill} $\square$

\noindent
{\bf A3. Proof of Lemma \ref{lem3.12}.}
Let $L$, $K$, and $F$ be compact subsets of $X$ with $L\subset K$. 
By the strong subadditivity and the monotonicity properties of ${\sf cap}_p$ (assertions $1)$ and $(2a)$ of Theorem \ref{thm3.11}) we get
${\sf cap}_p (K\cup F) + {\sf cap}_p (L)\leq  
{\sf cap}_p (K\cup(L\cup F))+ {\sf cap}_p K\cap(L\cup  F))\leq
{\sf cap}_p (K)+ {\sf cap}_p (L\cup F)$, 
hence 
${\sf cap}_p (K\cup F) - {\sf cap}_p (L\cup F) \leq
{\sf cap}_p (K) - {\sf cap}_p (L)$. 
We repeat this procedure for the compacts $E_i=K_i$ and $F_i = L_i$ and by induction we obtain
${\sf cap}_p (\bigcup_{i=1}^k K_i) - {\sf cap}_p (\bigcup_{i=1}^k L_i) =$$
{\sf cap}_p (\bigcup_{i=1}^{k-1} K_i\cup K_k) - {\sf cap}_p (\bigcup_{i=1}^{k-1} L_i\cup K_k) +
{\sf cap}_p (K_k \cup \bigcup_{i=1}^{k-1} L_i ) - {\sf cap}_p (L_k \cup \bigcup_{i=1}^{k-1} L_i ) \leq $$
\sum_{i=1}^{k-1} ( {\sf cap}_p (K_i) - {\sf cap}_p (L_i) ) +  {\sf cap}_p (K_k) - {\sf cap}_p (L_k)=
\sum_{i=1}^{k} ( {\sf cap}_p (K_i) - {\sf cap}_p (L_i) )$.
We conclude that $(\ref{3.8})$ holds for compact sets

We show now that $(\ref{3.8})$ also holds for open sets $E_i$ and $F_i$. 
Observe first that: 
if $K\subset \bigcup_{i=1}^k E_i$ and $L_i \subset F_i$ are compact sets with $ \bigcup_{i=1}^k L_i\subset K$,  
then the compact set
$K_i:= K\setminus \bigcup_{j=1, j\not=i} E_j$ is a subset of $E_i$ and it contains $L_i$.
Using this fact we deduce  that $(\ref{3.8})$ for open sets follows from $(\ref{3.8})$ for compact sets.
Analogously, $(\ref{3.8})$ for arbitrary sets $F_i \subset E_i$ follows from the case of open sets.
\hspace{\fill} $\square$

\vspace{1mm}

\noindent
{\bf A4. Quasiregular mappings.}
Let $\Omega$ be a domain n $\R^p$. 
A mapping $f:\Omega\longrightarrow\R^p$, 
$f=(f^1,...,f^p)$,  is called {\it quasiregular}
provided that the following conditions are satisfied:

$a)$ $f^i\in W^{1,p}_{loc}(\Omega) \,
\mbox{ for all } \, 1\leq i\leq p$; \

$b)$ There exists a constant  $K$, $1\leq K<\infty$ , such that 
\begin{equation} \label{7.9}
\| Df(x)\|^p\leq K J_f(x)\mbox{ for almost every } x\in\Omega, 
\end{equation}

\vspace{-3mm}

\noindent
where  $Df(x)$ is  identified with a linear mapping on $\R^p$,
$Df(x):\R^p \longrightarrow \R^p$, 
$
Df(x) e_i=\displaystyle\sum\limits_{j=1}^{p}
\frac{\partial f^j}{\partial x^i}(x) e_j , 
$
and the norm $\| Df(x)\|$ is understood  as the operator norm of the linear
map of the Euclidean space $\R^p$;
recall that $Df(x)$ is the {\it Jacobi matrix}  of $f$ which is meaningful at almost every point $x\in \Omega$ since $f$ is in the Sobolev
space  $W^{1,p}_{loc}(\Omega)$ and $J_f(x)$ is the {\it Jacobian},
$J_f(x):=\det Df(x).$

Recall that a mapping $f:\Omega\longrightarrow\R^p$ is called 
{\it quasiconformal } provided that it is quasiregular and homemorphism onto $f(\Omega)$.

Let $f$ be a quasiregular mapping.
The smallest  constant  $K\geq 1$ for which 
{$(\ref{7.9})$ holds is called the {\it outer dilatation} of $f$  in $\Omega$ and it is denoted by $K_0(f)$.}
The smallest  constant  $K'\geq 1$ for which 
$$
J_f(x)\leq K 'l (Df(x))^p \mbox{ for almost every } x\in\Omega
$$
is called {\it inner dilatation} of $f$
and it is denoted by $K_I (f)$.
Here
$l(Df(x)): =\displaystyle\min\limits_{\vert h\vert=1}\vert Df(x)h\vert $
and since  $f$ is quasiregular it follows that 
$K_I(f)<\infty$.

We consider the matrix $\theta_{f}$  associated with $f$,
\begin{equation} \label{7.10}
\theta_f(x) := J_f(x)^\frac{2}{p}[Df(x)]^{-1}[D^{*}f(x)]^{-1},
\end{equation}
where 
$D^*f(x)=(Df(x))^*$ is the  transpose of the
Jacobi matrix $Df(x)$.
Since $f$ is quasiregular, the following uniform ellipticity condition
holds  for all  $\xi\in\R^p$:
\begin{equation} \label{7.11}
K_0(f)^{-\frac{2}{p}}\vert\xi\vert^2 \leq
( \theta_f(x)\xi, \xi ) \leq
K_I(f)^\frac{2}{p}\vert\xi\vert^2;
\end{equation}
see \cite{Re1}  or \cite{Re2}.
We get that the $G =\theta_{f}$ is a symmetric, positive definite   
$p\times p$-matrix of Borel functions
on  $\Omega$, satisfying the uniform ellipticity condition.\\[2mm]



\noindent
{\bf Funding } 
We gratefully acknowledge financial support by the Deutsche Forschungsgemeinschaft
(DFG, German Research Foundation) -- Project-ID 317210226 -- SFB 1283. 
This work was supported by a grant of the Ministry of Research, Innovation and Digitization, CNCS - UEFISCDI,
project number PN-III-P4-PCE-2021-0921, within PNCDI III. \\



\begin{thebibliography}{MMM1}
\bibitem[AH]{AH}
D. R. Adams, L. I. Hedberg, 
{\it Function Spaces and Potential Theory.}
Springer-Verlag, 1996.
\bibitem[Beu]{Beu} 
P. van Beusekom, 
{\it On nonlinear Dirichlet forms}
(PhD Thesis) Univ. Utrecht, 1994. 
\bibitem[Be]{Be}
C. Beznea, 
{On the components of a $p$-dimensional quasiregular mapping}.
In:  {\it Proceedings of "Symposium on Advanced Topics in Ellectrical Eng. (ATTE 2000)},
Politehnica Univ. Bucharest 2000, pp. 85--88.
\bibitem[BM1]{BM1}
 M. Biroli, U. Mosco, 
{Formes de Dirichlet et estimation structurelles dans les milieux 
discontinus.}
{\it C. R. Acad. Sc. Paris } 
{\bf 313} (1991), 593--598.
\bibitem[BM2]{BM2} 
M. Biroli, U. Mosco, 
{A Saint Venant principle for Dirichlet forms on discontinuous media.}
{\it Ann. Mat. Pura Appl.} 
($IV$) {\bf 169} (1995), 125--181.
\bibitem[BI]{BI} 
B. Bojarski, T. Iwaniec, 
{Analytic foundations of the theory of quasiconformal mapping in   {\bf R}$^n$.}
{\it Annales Acad. Scient. Fennicae Series A. I. Mathematica}  
{\bf 8} (1983), 257--324.
\bibitem[BH1]{BH1} 
N. Bouleau, F. Hirsch, 
{Quasi-everywhere regularity of
solutions of Brownian stochastic differential equations and 
their flows}. 
{\it J. Math. pures et appl. }
{\bf 69} (1990), 95--129.
\bibitem[BH2]{BH2} 
N. Bouleau, F. Hirsch, 
{\it Dirichlet Forms and  Analysis on  Wiener Spaces}. 
Walter de Gruyter, Berlin-New York 1991.
\bibitem[E]{E}
A. Eberle,
{\it Uniqueness and Non-Uniqueness of Semigroups Generated by Singular Diffusion Operators}.
(Lecture Notes in Marh. Vol. 1718).
Springer 1999. 
\bibitem[Fu]{Fu} 
M. Fukushima,
{\it Dirichlet Forms and Markov Processes}. 
North-Holland, 1980.
\bibitem[FOT]{FOT} 
M. Fukushima, Y. Oshima, M. Takeda, 
{\it Dirichlet Forms and Symmetric Markov Processes}.
Walter de Gruyter, Berlin-New York, 1994. 
\bibitem[HKM]{HKM} 
J. Heinonen, T. Kilpel\"{a}inen, O. Martio, 
{\it Nonlinear Potential Theory of Degenerate Elliptic Equations.}
Clarendon Press (Oxford Univ. Press),1993.
\bibitem[HJ]{HJ}
W. Hoh, N. Jacob, 
Towards an $Lp$-potential theory for sub-Markovian semigroups:
variational inequalities and balayage theory. 
{\it J. Evol. Equ.}  {\bf 4} (2004), 297--312.
\bibitem[IM]{IM} 
T. Iwaniec, G. Martin,
{Quasiregular mappings in even dimensions.}
{\it Acta. Math.} 
{\bf 170} (1993), 29--81. 
\bibitem[JS]{JS} 
N. Jacob,  R. Schilling, 
Extended $Lp$  Dirichlet spaces. 
In: {\it Around the research of Vladimir Maz'ya I: Function Spaces}, {International Mathematical
Series} {\bf 11} (Edit. by Ari Laptev), Springer 2010, pp.  221--238.
\bibitem[K]{K}
K. Kuwae, 
{\it $(1, p)$-Sobolev spaces based on strongly local Dirichlet forms}. 	arXiv:2310.11652  (2023).
\bibitem[LJ]{LJ}
Y. Le Jean, 
{Mesures associ\' ees \` a une forme de  Dirichlet/ Applications.}
{\it Bull. Soc. Math. France}
{\bf 106} (1978), 61--112.
\bibitem[Mo]{Mo}
U. Mosco,
{Composite media and asymptotic Dirichlet forms.}
{\it J. Functional Anal.}
{\bf 123} (1994), 368-421.
\bibitem[MR]{MR} 
Z.-M. Ma, M. R\"{o}ckner, 
{\it An introduction to the Theory of (non-symmetric) Dirichlet Forms.}
Springer-Verlag 1992.
\bibitem[Re1]{Re1}
Yu. G. Reshetnyak, 
{\it Prostranstvennye otobrazheniia s ogranichennym iskazheniem.}  
Nauka, Novosibirsk, 1982.
\bibitem[Re2]{Re2} 
Yu. G. Reshetnyak, 
{\it Space mappings with bounded distorsion.}
Transl. of Math. Monographs, AMS, {\bf 79} (1989).
\bibitem[Ri]{Ri}
S. Rickman, 
{\it Quasiregular mappings.}
Springer-Verlag, Berlin, 1991.
\bibitem[St1]{St1} 
K.T. Sturm, 
{Analysis on local Dirichlet spaces-}$I$.
{Recurrence, conservativeness and $L^p$-Liouville properties.}
{\it J. Reine angew. Math.}  {\bf 456} (1994), 173--196.  
\bibitem[St2]{St2}
K.T. Sturm, 
{On the geometry defined by Dirichlet forms.}
In:  {\it Seminar on Stochastic Processes,
Random fields and Applications, Ancona}
(Progres in probability, vol 36), Birkh\"auser, 1995.
\end{thebibliography}
\end{document}